\definecolor{darkblue}{rgb}{0.0, 0.0, 0.55}
\numberwithin{equation}{section}
\newcommand{\gengam}[2]{ {\Gamma_{#1,#2} (\cD_A) } }
\newcommand{\C}{\mathbb C}
\newcommand{\RR}{\mathbb R}
\newcommand{\FF}{\mathbb F}
\newcommand{\CC}{\mathbb C}
\newcommand{\cD}{\mathcal D}
\newcommand{\cS}{\mathcal S}
\def\dropAB{ { {\mathrm{proj}_x\hspace{0.05cm}  \cD_{(A,B)} } } }
\def\hbeta{{\hat{\beta}}}
\def\hpsi{{\hat{\psi}}}
\def\hY{{\hat{Y}}}
\newtheorem{theorem}{Theorem}[section]
\newtheorem{corollary}[theorem]{Corollary}
\newtheorem{proposition}[theorem]{Proposition}
\theoremstyle{definition}
\newtheorem{definition}[theorem]{Definition}
\newtheorem{remark}[theorem]{Remark}
\newtheorem{example}[theorem]{Example}
\title[Extreme points of matrix convex sets and their spanning properties]{Extreme points of matrix convex sets and their spanning properties}
\author{Eric Evert}
\author{Benjamin Passer}
\thanks{The views expressed in this document are those of the authors and do not reflect the official policy or position of the U.S. Department of Defense or the U.S. Government.}
\author{Tea \v{S}trekelj}
\newcommand{\Addresses}{{
    \bigskip
    \footnotesize

    \noindent\textsc{Department of Computer Science, Northwestern University, Evanston, IL, United States}\par\nopagebreak \textit{E-mail address}: \texttt{eric.evert@northwestern.edu}

    \medskip \medskip
    
    \noindent \textsc{Department of Mathematics, United States Naval Academy, Annapolis, MD, United States}\par\nopagebreak \textit{E-mail address}: \texttt{passer@usna.edu}

    \medskip \medskip

    \noindent \textsc{Institute of Mathematics, Physics and Mechanics, Ljubljana, Slovenia}\par\nopagebreak \textit{E-mail address}: \texttt{tea.strekelj@fmf.uni-lj.si}
  }}
\begin{document}

\begin{abstract}
This expository article gives a survey of matrix convex sets, a natural generalization of convex sets to the noncommutative (dimension-free) setting, with a focus on their extreme points. Mirroring the classical setting, extreme points play an important role in matrix convexity, and a natural question is, ``are matrix convex sets the (closed) matrix convex hull of their extreme points?" That is, does a Krein-Milman theorem hold in this setting? This question requires some care, as there are several notions of extreme points for matrix convex sets. Three of the most prevalent notions are matrix extreme points, matrix exposed points, and free extreme points. For each of these types of extreme points, we examine strengths and shortcomings in terms of a Krein-Milman theorem. Of particular note is the fact that these extreme points are all finite-dimensional in nature. As such, a large amount of our discussion is about free spectrahedra, which are matrix convex sets determined by a linear matrix inequality.

\end{abstract}

\maketitle

 \section{Introduction}

	An important part of the classical theory of convexity is the study of distinguished points of the relative boundary of a (compact) convex set $C$, as these points capture many of the relevant properties of $C$. The extreme points ext($C$) of a convex set $C$ are those points $c \in C$ that cannot be expressed as a nontrivial convex combination of the elements of $C.$ An equivalent geometric description is that the extreme points are not interior points of any line segment lying entirely in $C.$ A cornerstone of the theory of convexity is the Krein-Milman theorem \cite[Section III.4]{Ba}, which states that a compact convex set $C$ is the closed convex hull of its extreme points, so in this case the extreme points generate $C.$\looseness=-1
	
	The exposed points of a convex set $C$ form a subset of the extreme points that is also of interest. Exposed points are the points of $C$ that can be weakly separated from $C$ by a supporting affine hyperplane. For polyhedra in $\RR^n,$ the exposed and extreme points coincide, and the same claim holds for spectrahedra in $\RR^n$ by \cite{RG}. However, in general, exposed points form a proper subset of the extreme boundary. If $C$ is a compact convex set in a normed vector space, the exposed points are dense in the extreme points by the Straszewicz theorem (see \cite[Section II.2]{Ba} and \cite{K}). As a corollary, a compact convex set in a normed vector space is the closed convex hull of its exposed points.
 In this article we review the theory on the extreme boundary of noncommutative convex sets, i.e., matrix convex sets.
	
	\subsection{Matrix convex sets}
	For a  locally convex space $V$, let $M_{m,n}(V)$ denote the space of $m \times n$ matrices over $V.$ For convenience, also let $M_n(V) = M_{n,n}(V)$ and $M(V) = \cup_n M_n(V).$ We will primarily be concerned with the case that $V = \mathbb{F}^g$ where $\mathbb{F} \in \{\mathbb{R},\mathbb{C}\},$ in which case the space $M_n(\mathbb{F}^g)$ is canonically identified with $M_n(\mathbb{F})^g,$ the set of $g$-tuples of $n \times n$ matrices.
 If $V = \mathbb{F},$ denote by $SM_n(\mathbb{F}) \subset M_n(\mathbb{F})$ the set of self-adjoint matrices, let $SM(V) = \cup_n SM_n(V)$,  and write $I_n \in M_n(\mathbb{F})$ for the identity matrix. All the matrix spaces $M_{m,n}(V)$ over $V$ are endowed with the product topology.
 A graded family $\textbf{$S$}= (S_n)_{n \in \mathbb{N}} \subseteq (M_n(V))_{n \in \mathbb{N}}$ is closed (compact) if it is levelwise closed (compact), i.e., for each $n,$ the set $S_n$ is closed (compact). We say that two elements $A,B \in M_n (V)$ are unitarily equivalent, denoted $A\sim_u B$, if there exists a unitary $U \in M_n(\mathbb{F})$ such that $U^* A U = B$. Finally, we say $A \in M(V)$ is reducible if there exist $B \in M(V)$ and $C \in M(V)$ such that $A \sim_u B \oplus C$, and we say $A$ is irreducible if it is not reducible. 

\begin{definition} \cite{Wit}
	Suppose that for each $n \in \mathbb{N}$, the set $K(n)$ is a subset of $M_n(V)$, and denote by \textbf{$K$} the graded set $(K(n))_{n \in \mathbb{N}}$.

 \begin{enumerate}[(a)]
	\item Let $X^1,\ldots, X^k \in \textbf{$K$}$ with $X^i \in K(n_i)$. An expression of the form
	\begin{equation}\label{eq-11}
		\sum_{i=1}^k \gamma_i^\ast X^i \gamma_i,
	\end{equation}
	where $\gamma_i \in M_{n_i,n}(\mathbb{F})$ are matrices with $\sum_{i=1}^k \gamma_i^\ast \gamma_i = I_n$, is a \textbf{matrix convex combination} of the  points  $X^1,\ldots, X^k$. 
	\item We call \textbf{$K$} a \textbf{matrix convex set} in $V$ if it is closed under matrix convex combinations.
 \end{enumerate}

\end{definition}

Equivalently, a graded set \textbf{$K$} is matrix convex if and only if it is closed under formation of direct sums and conjugations by isometries. If $K$ is matrix convex and $0 \in K(1),$ then \textbf{$K$} is also closed under conjugations by arbitrary contractions (see, e.g., \cite[Lemma 2.3]{HKM16}).
Note that for a matrix convex set \textbf{$K$}, each $K(n)$ is a classical convex set.

Matrix convex sets appear naturally in both operator theoretic and matrix theoretic problems. The concept of the  numerical range of a matrix naturally extends to the matrix range of a tuple of bounded operators, which is a closed and bounded matrix convex set over Euclidean space. Matrix ranges (as well as operator systems) are briefly examined in section \ref{subsec:opersys}. Further, just as a polyhedron is a convex set determined by a finite collection of linear inequalities, a free spectrahedron is a matrix convex set that is determined by a finite collection of linear matrix inequalities, or equivalently, a single linear matrix inequality. Free spectrahedra are examined in section \ref{sec:freespecsec}, including both parallels with and departures from the classical case. 

For any graded set $\textbf{$S$} = (S_n)_{n \in \mathbb{N}}$ with $S_n \subseteq M_n(V),$ the 
set of all matrix convex combinations of the elements of $S$ is called the \textbf{matrix convex hull} of $S$ and is denoted by mconv($S$). The matrix convex hull of $S$ is also the intersection of all matrix convex sets containing $S.$ Its closure is denoted by $\overline{\text{mconv}}(\textbf{$S$})$.

\begin{definition}
  Let \textbf{$K$} and \textbf{$L$} be matrix convex sets over spaces $V$ and $W$, respectively. A \textbf{matrix affine map} is a continuous linear map $\Phi : V \to W$ that satisfies $\Phi_r(K(r)) \subseteq L(r)$ for all $r \in \mathbb{N}$ and
	$$
	\Phi_r\bigg(\sum_{i=1}^k \gamma_i^\ast X^i \gamma_i\bigg) =
	\sum_{i=1}^k\gamma_i^\ast \, \Phi_{r_i}(X^i)  \gamma_i
	$$
	 for all $k$-tuples $(X^i)_{i=1}^k$ and $(\gamma_i)_{i=1}^k$ such that $X^i \in K(r_i)$ and $\gamma_i \in M_{r_i,r}(\mathbb{F})$ for $i = 1,\ldots, k$ with the property $\sum_{i=1}^k \gamma_i^\ast \gamma_i = I_r$. Here for any positive integer $r$ and $B = (B_{i,j}) \in M_r(V)$ we denote by 
	$$\Phi_r(B)  = \big(\Phi(B_{i,j})\big)$$ 
	the $r$th \textbf{ampliation} of $\Phi.$ We call $\Phi$ a \textbf{matrix affine homeomorphism} if each map $\Phi_r$ is a homeomorphism.
\end{definition}

A standard example of a matrix affine map between matrix convex sets \textbf{$K$} and \textbf{$L$} over spaces $V$ and $W$ respectively is the family of ampliations $(\Phi_r)_r$ of any linear map $ \Phi_1: V \to W.$

\section{Matrix extreme points}
Matrix convex sets have multiple classes of extreme points. The earliest class, matrix extreme points, was introduced by Webster and Winkler in \cite{WW}. However, we note that $C^\ast$-extreme points, which are the extreme points of single-level noncommutative convex sets (called $C^\ast$-convex sets), were defined earlier by Farenick and Morenz in \cite{FM}.
\begin{definition}\label{mext}
	Let \textbf{$K$} be a matrix convex set.
	\begin{enumerate}[(a)]
	\item A matrix convex combination \eqref{eq-11} is \textbf{proper} if all of the matrices $\gamma_i$ are onto (i.e., they correspond to surjective linear transformations).	
	\item A point $X \in K(n)$ is \textbf{matrix extreme} if from any expression of $X$ as a proper matrix convex combination of elements $X^i \in K(n_i)$, it follows that $n_i = n$ and each of the $X^i$ is unitarily equivalent to $X$.
 \end{enumerate}

\end{definition}

Note in particular that if $X \in K(n)$ is written as a proper matrix convex combination of points of $K$, then those points must be chosen from levels $1$ through $n$. So, determining whether $X \in K(n)$ is a matrix extreme point of $K$ only requires knowledge of $K(n)$ itself, not any higher levels. Denote the set of matrix extreme points of $K$ by mext($K$).

\begin{remark}
    The appearance of unitary equivalence in the definition of a matrix extreme point (in contrast to equality in the classical theory) is natural and in fact needed. Demanding equality instead of unitary equivalence would imply that a matrix extreme point has a trivial unitary orbit, which is far too restrictive. Indeed, any unitary conjugation of a point is a matrix convex combination of that point, so the unitary orbit can always be obtained by matrix convex combinations. 
\end{remark}

Since for $n=1,$ any matrix convex combination \eqref{eq-11} reduces to a scalar convex combination, the matrix extreme points of $K(1)$ are precisely the classical extreme points. In general, the matrix extreme points of $K(n)$ are extreme in the classical sense, but there may be extreme points that are not matrix extreme. Note that if $X$ is matrix extreme, then by writing out a classical convex combination $X = tY + (1-t)Z$ of points $Y$ and $Z$ as\looseness=-1
$$
X =  (\sqrt{t}I)^* Y \sqrt{t}I + (\sqrt{1-t}I)^* Z \sqrt{1-t}I,
$$
one can only deduce that $Y$ and $Z$ are unitarily equivalent (but not necessarily equal) to $X.$
A complete argument for the case of a compact matrix convex set is given by \protect{\cite[Corollary 3.6]{WW}}, which relies on the duality between matrix convex sets and operator systems. However, the claim that matrix extreme points are extreme also holds for non-compact sets; it is for example an easy corollary of the characterization \protect{\cite[Proposition 4.6]{EHKM}} (see also \cite{HL}).

\subsection{The Webster-Winkler Krein-Milman theorem for matrix convex sets}
Since the matrix extreme points of $K(1)$ coincide with the classical extreme points of $K(1)$, we know by the Krein-Milman theorem \cite[Theorem III.4.1]{Ba} that if $K(1)$ is compact, then it has extreme points. In this case, it follows that mext($K$) is also nonempty. The next theorem, which is a matricial Krein-Milman theorem by Webster-Winkler \cite{WW}, states that a compact matrix convex set has  ``enough'' matrix extreme points.

\begin{theorem}\label{th: km}
    Let $K$ be a compact matrix convex set in a locally convex space $V$. Then $\text{\textnormal{mext}}(K) \neq \emptyset$ and
		$$
		K = \overline{\text{\textnormal{mconv}}}(\text{\textnormal{mext}}(K)).
		$$
\end{theorem}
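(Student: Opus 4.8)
The plan is to mirror the classical Krein--Milman argument but carried out in the dual picture, where matrix convex sets correspond to operator systems. Recall (this is Webster--Winkler's central tool) that a compact matrix convex set $K$ containing $0$ in a suitable sense is the matrix state space (or, more precisely, the set of matrix-valued unital completely positive maps) of an operator system $\mathcal{R} = \mathcal{R}(K)$, and conversely every such matrix state space is compact matrix convex. Under this correspondence, matrix extreme points of $K$ should be identified with the (finite-dimensional) ``pure'' matrix states, i.e., the matrix UCP maps that cannot be written as a proper matrix convex combination of others; these are exactly the boundary representations in the finite-dimensional setting, and one wants an Arveson-type abundance result for them. So the first step is to set up this duality carefully, reducing the theorem to: (i) the set of pure matrix states is nonempty, and (ii) every matrix state is a pointwise limit of matrix convex combinations of pure matrix states.

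For step (i), nonemptiness, I would argue as follows. Since $K(1)$ is a nonempty compact convex subset of $V$, the classical Krein--Milman theorem gives an extreme point $x \in K(1)$. One then checks that a classical extreme point at level $1$ is automatically a matrix extreme point: any proper matrix convex combination $x = \sum \gamma_i^* X^i \gamma_i$ with $X^i \in K(n_i)$ and $\gamma_i$ onto forces $n_i = 1$ (since $\gamma_i \in M_{n_i,1}$ onto means $n_i \le 1$), and then the combination is an ordinary convex combination, so extremality gives $X^i = x$. Hence $\text{mext}(K) \ne \emptyset$.

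The substantive part is step (ii), the spanning statement. The approach I would take is an induction on the matrix level $n$, combined with a ``dilation/reduction'' argument at each fixed level. Fix $X \in K(n)$. If $X$ is already matrix extreme, we are done. Otherwise $X$ is a proper matrix convex combination of points at levels $< n$ or unitarily inequivalent points at level $n$; iterating and using compactness (to extract limits) one reduces to the claim that every $X \in K(n)$ lies in $\overline{\text{mconv}}$ of matrix extreme points coming from levels $\le n$. The cleanest way to make this rigorous is to invoke the known characterization of matrix extreme points via Arveson-type boundary representations on the finite-dimensional operator system $\mathcal{R}(K)$: a theorem (available in the finite-dimensional setting, e.g. via the work building on Arveson's conjecture, resolved by Davidson--Kennedy, specialized to finite dimensions) that every matrix state dilates to / is a compression of a direct sum of boundary representations, and boundary representations restrict to matrix extreme points. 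Concretely: given $X\in K(n)$, view it as a UCP map $\varphi\colon \mathcal R\to M_n$; decompose $\varphi$ (up to arbitrarily small perturbation, using compactness/finite dimensionality) as a compression of a finite direct sum $\bigoplus_j \pi_j$ of boundary representations $\pi_j$; writing the compression in coordinates exhibits $X$ as a matrix convex combination of the points $\pi_j\in K$, each of which is matrix extreme. Taking closures handles the approximation.

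The main obstacle I anticipate is precisely the input to step (ii): one needs the finite-dimensional abundance of boundary representations (or, equivalently, Arveson's hyperrigidity/boundary theorem in finite dimensions) to guarantee that an arbitrary matrix state is approximable by matrix convex combinations of pure ones — the naive inductive peeling-off of non-extreme points does not obviously terminate or converge without such a structural result, because unlike the classical compact convex case one cannot simply take a maximal chain (there is no Zorn's lemma argument producing extreme points directly at each level, as the relevant order is not a priori chain-complete in the matricial setting). An alternative, more self-contained route that avoids quoting the full boundary representation theory is the original Webster--Winkler strategy: work with the weak-$*$ compact convex set of matrix states at a fixed level, apply the classical Krein--Milman theorem there to get classical extreme points of $K(n)$, then run a separate argument (again dualizing, via a Hahn--Banach separation in the operator system and a careful analysis of which separating functionals are ``matrix states'') to show these classical extreme points are matrix convex combinations of genuine matrix extreme points possibly at lower levels — with compactness supplying the limits. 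Either way, the crux is the passage from classical extremality at level $n$ to matricial extremality, and that is where the operator-system duality does the real work.
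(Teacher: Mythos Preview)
Your nonemptiness argument is fine and matches the paper's observation that classical extreme points of $K(1)$ are matrix extreme.

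The spanning argument, however, has a genuine gap rooted in a conflation of notions. You identify matrix extreme points with ``pure matrix states'' (correct, in Farenick's terminology) but then equate these with boundary representations. They are not the same: finite-dimensional boundary representations correspond to \emph{free} extreme points, a strictly stronger notion than matrix extreme (Definition~\ref{fext} and Corollary~\ref{cor:TrivDilations} in the paper). The Davidson--Kennedy theorem produces boundary representations that may be infinite-dimensional, hence not elements of $K$ at all, and even restricting to finite-dimensional ones gives only free extreme points. The paper later exhibits compact matrix convex sets (Example~\ref{example:Cuntz}, via Cuntz isometries) with \emph{no} free extreme points whatsoever, yet Theorem~\ref{th: km} still applies to them. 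So the boundary-representation route cannot prove this theorem; it proves too little in general and requires machinery that postdates and is far deeper than the result you are trying to establish.

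The paper's actual argument (following Hartz--Lupini's streamlining of Webster--Winkler) is entirely elementary and avoids operator-system duality. One associates to $K$ a single \emph{classical} compact convex set
\[
\Gamma_n(K) = \{(\gamma^*\gamma,\, \gamma^* X \gamma) : X \in K(k),\ \gamma \in M_{k,n},\ \mathrm{tr}(\gamma^*\gamma)=1\} \subseteq M_n(\mathbb{F}) \times M_n(V),
\]
and the key lemma (Proposition~\ref{pr: gama ext}) is that $(\gamma^*\gamma, \gamma^* X \gamma)$ with $\gamma$ surjective is a classical extreme point of $\Gamma_n(K)$ if and only if $X$ is matrix extreme in $K$. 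The classical Krein--Milman theorem applied to $\Gamma_n(K)$ then does all the work. This is the ``separate argument'' you gesture at in your alternative route, but the specific construction of $\Gamma_n(K)$ --- tracking the coefficient $\gamma^*\gamma$ alongside the compression $\gamma^* X \gamma$ --- is the idea you are missing, and without it the passage from classical extremality at level $n$ to matrix extremality does not go through.
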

The original idea of the proof by Webster and Winkler was to associate to a matrix convex set a family of classical convex sets and reduce the proof of the matricial Krein-Milman theorem to the classical Krein-Milman theorem. In 2019, Hartz and Lupini \cite{HL} proposed a slight adaptation, which we now present, as it arguably eases the original proof. We associate to a matrix convex set $\textbf{$K$} = (K(r))_{r \in \mathbb{N}}$ in the space $V$ a family of convex sets $\{\Gamma_n(\textbf{$K$})\}_{n \in \mathbb{N}}$ given by
\begin{equation}\label{eq311}
	\Gamma_n(\textbf{$K$}) = \{ (\gamma^\ast \gamma, \gamma^\ast X \gamma) \ | \ \gamma \in M_{k, n}(\mathbb{F}), \text{tr}(\gamma^\ast \gamma) = 1, k \in \mathbb{N}, X \in K(k) \} \subseteq M_n(\mathbb{F}) \times M_n(V).
\end{equation}
To see that $\Gamma_n(\textbf{$K$})$ is indeed convex, note that 
$$
t \gamma^\ast X \gamma + (1-t)\delta^\ast Y \delta = 
\bigg[t^{1/2}\gamma^\ast \ \  (1-t)^{1/2}\delta^\ast\bigg]
\begin{bmatrix}
	X & 0\\
	0 & Y \\
\end{bmatrix}
\begin{bmatrix}
	t^{1/2}\gamma\\
	(1-t)^{1/2}\delta\\
\end{bmatrix}
$$
for elements $(\gamma^\ast \gamma, \gamma^\ast X \gamma)$ and $(\delta^\ast \delta, \delta^\ast Y \delta)$ of $\Gamma_n(\textbf{$K$}),$ where $X\in K(r),$ $Y \in K(s)$, $\gamma \in M_{r, n}(\mathbb{F})$ and $\delta \in M_{s, n}(\mathbb{F})$ are matrices satisfying $\text{tr}(\gamma^\ast \gamma) = \text{tr}(\delta^\ast \delta) = 1,$ and $t \in [0, 1]$ is arbitrary.
Since \textbf{$K$} is closed under direct sums
and
$$
\text{tr}\bigg(
\begin{bmatrix}
	t^{1/2}\gamma\\
	(1-t)^{1/2}\delta\\
\end{bmatrix}^\ast
\begin{bmatrix}
	t^{1/2}\gamma\\
	(1-t)^{1/2}\delta\\
\end{bmatrix}
\bigg)
=
t\,\text{tr}(\gamma^\ast \gamma) + (1-t)\,\text{tr}(\delta^\ast \delta) = 1,
$$
the convex combination $t \big(\gamma^\ast \gamma, \gamma^\ast X \gamma\big) + (1-t)\big(\delta^\ast \delta, \delta^\ast Y \delta\big)$ lies in $\Gamma_n(\textbf{$K$}).$

We remark why for any element $(\gamma^\ast \gamma, \gamma^\ast X \gamma)$ from $\Gamma_n(\textbf{$K$})$, where $X$ is in $K(r)$, the matrix $\gamma \in M_{r, n}(\mathbb{F})$ can be assumed surjective. If $\gamma \in M_{r, n}(\mathbb{F})$ is a matrix of rank $s \in \mathbb{N}$ satisfying tr$(\gamma^\ast \gamma) = 1$  and $\xi \in M_{r, s}(\mathbb{F})$ is an isometry from $\mathbb{F}^s$ to the range of $\gamma,$ then
\begin{equation}\label{izo}
	\gamma^\ast X \gamma = (\xi^* \gamma)^\ast (\xi^\ast X \xi) (\xi^\ast \gamma).
\end{equation}
Note that $\xi^* X \xi$ lies in $K(s)$ and the matrix $\xi^* \gamma$ is surjective with tr$((\xi^* \gamma)^\ast (\xi^* \gamma)) = 1$.

The key to finish the proof is the following proposition \cite[Proposition 2.14]{HL}, which gives the connection between matrix extreme points of $K$ and classical extreme points of $\Gamma_n(K).$

\begin{proposition} \label{pr: gama ext}
    Let $\textbf{$K$} = (K(n))_{n \in \mathbb{N}}$ be a matrix convex set in a vector space $V$. 
    Let $X \in K(k)$ and let $\gamma \in M_{k, n}(\mathbb{F})$ be a surjective matrix with $\text{tr}(\gamma^\ast \gamma) = 1.$ Then $(\gamma^\ast \gamma, \gamma^\ast X \gamma)$ is an extreme point of $\Gamma_n(K)$ if and only if $X$ is a matrix extreme point of $K.$
\end{proposition}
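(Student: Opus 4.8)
The plan is to prove the two implications of the equivalence separately. In both, the workhorse is that surjectivity of $\gamma$ lets one pass back and forth between convex decompositions in $\Gamma_n(K)$ and matrix convex combinations in $K$: writing $\gamma^{+}\in M_{n,k}(\mathbb{F})$ for the Moore–Penrose pseudoinverse, so that $\gamma\gamma^{+}=I_k$ and $\gamma^{+}\gamma$ is the orthogonal projection onto $(\ker\gamma)^{\perp}$, every $\delta$ with $\ker\delta\supseteq\ker\gamma$ factors as $\delta=(\delta\gamma^{+})\gamma$, while $T\mapsto\gamma^{*}T\gamma$ is injective on $M_k(V)$. I also write $G:=\gamma\gamma^{*}$, a positive definite matrix with $\operatorname{tr}G=1$.

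\emph{$(\gamma^{*}\gamma,\gamma^{*}X\gamma)$ extreme $\Rightarrow$ $X$ matrix extreme.} Let $X=\sum_{i=1}^{p}\gamma_i^{*}X^{i}\gamma_i$ be any proper matrix convex combination, with $\gamma_i\in M_{n_i,k}(\mathbb{F})$ onto, $\sum_i\gamma_i^{*}\gamma_i=I_k$, $X^{i}\in K(n_i)$. Put $\alpha_i:=\gamma_i\gamma$, $t_i:=\operatorname{tr}(\alpha_i^{*}\alpha_i)=\operatorname{tr}(\gamma_i^{*}\gamma_i G)$ (each $t_i>0$ since $\gamma_i\neq0$ and $G>0$, and $\sum_i t_i=\operatorname{tr}(\gamma^{*}\gamma)=1$), and $\beta_i:=t_i^{-1/2}\alpha_i$; then $p_i:=(\beta_i^{*}\beta_i,\beta_i^{*}X^{i}\beta_i)\in\Gamma_n(K)$, $\operatorname{tr}(\beta_i^{*}\beta_i)=1$, and $\sum_i t_i p_i=(\gamma^{*}\gamma,\gamma^{*}X\gamma)$. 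Extremality forces $p_i=(\gamma^{*}\gamma,\gamma^{*}X\gamma)$ for every $i$; cancelling $\gamma^{*}(\cdot)\gamma$ yields $\gamma_i^{*}\gamma_i=t_iI_k$ and $\gamma_i^{*}X^{i}\gamma_i=t_iX$. The first says $t_i^{-1/2}\gamma_i$ is an isometry, which combined with its being onto $\mathbb{F}^{n_i}$ forces $n_i=k$ and $t_i^{-1/2}\gamma_i$ unitary; the second then gives $X^{i}=(t_i^{-1/2}\gamma_i)X(t_i^{-1/2}\gamma_i)^{*}\sim_u X$. Hence $X$ is matrix extreme.

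\emph{$X$ matrix extreme $\Rightarrow$ $(\gamma^{*}\gamma,\gamma^{*}X\gamma)$ extreme.} Suppose $(\gamma^{*}\gamma,\gamma^{*}X\gamma)=tp_1+(1-t)p_2$ with $t\in(0,1)$ and $p_i\in\Gamma_n(K)$; we must show $p_1=p_2$. Write $p_i=(\delta_i^{*}\delta_i,\delta_i^{*}X^{i}\delta_i)$ with $\delta_i$ onto (possible by \eqref{izo}), $\operatorname{tr}(\delta_i^{*}\delta_i)=1$, $X^{i}\in K(r_i)$. Comparing first coordinates, $\ker\gamma=\ker(\gamma^{*}\gamma)=\ker\delta_1\cap\ker\delta_2$, so $\ker\gamma\subseteq\ker\delta_i$ and $\delta_i=A_i\gamma$ with $A_i:=\delta_i\gamma^{+}\in M_{r_i,k}(\mathbb{F})$; since $\gamma$ is onto, $\operatorname{Ran}(A_i\gamma)=\operatorname{Ran}(A_i)$, so $A_i$ is onto $\mathbb{F}^{r_i}$ and $r_i\le k$. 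Cancelling $\gamma^{*}(\cdot)\gamma$ in both coordinates gives $tA_1^{*}A_1+(1-t)A_2^{*}A_2=I_k$ and $tA_1^{*}X^{1}A_1+(1-t)A_2^{*}X^{2}A_2=X$; with $B_1:=\sqrt{t}\,A_1$, $B_2:=\sqrt{1-t}\,A_2$ this exhibits $X=B_1^{*}X^{1}B_1+B_2^{*}X^{2}B_2$ as a proper matrix convex combination, so matrix extremeness forces $r_1=r_2=k$ (hence $A_1,A_2,B_1,B_2\in M_k(\mathbb{F})$ invertible) and $X^{i}\sim_u X$. Absorbing the intertwining unitaries into $A_1,A_2$ (which changes neither $A_i^{*}A_i$ nor the $p_i$), we may assume $X^{1}=X^{2}=X$. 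Now $p_i=(\gamma^{*}A_i^{*}A_i\gamma,\ \gamma^{*}A_i^{*}XA_i\gamma)$, so $p_1=p_2$ is equivalent to $A_1^{*}A_1=A_2^{*}A_2=I_k$ together with $A_i^{*}XA_i=X$; moreover, once $A_1^{*}A_1=I_k$ the relation $tA_1^{*}A_1+(1-t)A_2^{*}A_2=I_k$ gives $A_2^{*}A_2=I_k$, after which $A_1,A_2$ are unitary, $A_i^{*}XA_i\in K(k)$, and $X=t\,A_1^{*}XA_1+(1-t)A_2^{*}XA_2$ exhibits $X$ as a genuine convex combination of points of $K(k)$, whence classical extremality of $X$ in $K(k)$ (matrix extreme points are extreme) yields $A_i^{*}XA_i=X$.

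Everything thus comes down to the following, which I expect to be the main obstacle: \emph{if $B_1,B_2\in M_k(\mathbb{F})$ are invertible with $B_1^{*}B_1+B_2^{*}B_2=I_k$ and $B_1^{*}X_jB_1+B_2^{*}X_jB_2=X_j$ for every $j$, then $B_1^{*}B_1$ is a scalar multiple of $I_k$} — the trace-one normalization, which gives $\operatorname{tr}(B_1^{*}B_1 G)=t$ while $\operatorname{tr}G=1$, then pins that scalar to $t$, i.e.\ $A_1^{*}A_1=I_k$. Here one uses that a matrix extreme point is irreducible: a nontrivial $X\sim_u Y\oplus Z$ realizes $X$ as $\gamma_1^{*}Y\gamma_1+\gamma_2^{*}Z\gamma_2$ with $\gamma_1,\gamma_2$ the coordinate projections $\mathbb{F}^{k}\to\mathbb{F}^{k'}$, $\mathbb{F}^{k}\to\mathbb{F}^{k''}$, a proper matrix convex combination of $Y\in K(k')$, $Z\in K(k'')$ at levels $k',k''<k$, contradicting matrix extremeness. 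Equivalently, one must show that the unital completely positive self-map $\Phi(T)=B_1^{*}TB_1+B_2^{*}TB_2$ of $M_k(\mathbb{F})$, which fixes $I$ and every $X_j$, equals $\mathrm{id}$; given that, its Kraus operators $B_1,B_2$ must be scalar multiples of $I_k$ (the identity channel has a single Kraus operator). The natural route to $\Phi=\mathrm{id}$ is via the minimal Stinespring dilation $\Phi(T)=V^{*}(T\otimes I_d)V$: once each $X_j$ is known to lie in the multiplicative domain of $\Phi$ — equivalently $\Phi(X_j^{2})=X_j^{2}$ — irreducibility of $X$ pins $\operatorname{Ran}V$ down to the form $\mathbb{F}^{k}\otimes\mathbb{F}w$ for a unit vector $w$, so $\Phi$ is conjugation by a unitary commuting with all $X_j$, hence by a scalar. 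The genuinely delicate point is therefore the equality $\Phi(X_j^{2})=X_j^{2}$: the Kadison–Schwarz inequality only gives $\Phi(X_j^{2})\ge X_j^{2}$, and squeezing equality out of the fixed-point relation $\Phi(X_j)=X_j$ on an irreducible tuple — using the trace-one slicing built into $\Gamma_n$ — is where I expect the real work of the proof to be concentrated.
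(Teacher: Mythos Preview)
Your first direction (extreme in $\Gamma_n(K)$ $\Rightarrow$ matrix extreme) is correct and essentially the standard argument.

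The second direction has a genuine gap, and you have located it precisely. After absorbing unitaries you reduce to: \emph{if $X$ is matrix extreme and $X=B_1^{*}XB_1+B_2^{*}XB_2$ with $B_1,B_2\in M_k(\mathbb{F})$ invertible and $B_1^{*}B_1+B_2^{*}B_2=I_k$, then $B_1^{*}B_1$ is a scalar.} This statement is exactly the assertion that the Webster--Winkler definition of matrix extreme (Definition~\ref{mext}, which only forces $X^{i}\sim_u X$) implies the a~priori stronger Farenick-style formulation in which every proper combination $X=\sum_i\gamma_i^{*}X^{i}\gamma_i$ forces each $\gamma_i$ to be a positive scalar multiple of a unitary $U_i$ with $U_i^{*}X^{i}U_i=X$. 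That equivalence is true but not elementary; it is the content of the Webster--Winkler/Farenick theorem identifying matrix extreme points with \emph{pure} matrix states, and it is the black box on which the Hartz--Lupini proof of this proposition rests. You are trying to reprove that equivalence from scratch, and your proposed mechanism does not work.

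Concretely, your multiplicative-domain plan needs $\Phi(X_j^{2})=X_j^{2}$ for the unital CP map $\Phi(T)=B_1^{*}TB_1+B_2^{*}TB_2$. Kadison--Schwarz gives only $\Phi(X_j^{2})\succeq X_j^{2}$, and there is no squeeze: $\Phi$ is unital, not trace-preserving (you know $B_1^{*}B_1+B_2^{*}B_2=I$, not $B_1B_1^{*}+B_2B_2^{*}=I$), so you cannot compare traces to force equality. The trace-one normalization you invoke controls $\operatorname{tr}(B_1^{*}B_1\,G)$ against $\operatorname{tr}G$, which carries no information about $\operatorname{tr}\Phi(X_j^{2})$; irreducibility of $X$ alone does not help either, since irreducible tuples admit many unital CP self-maps fixing them that are not the identity. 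The clean repair is to invoke the pure-matrix-state characterization (as in \cite{F,WW}, cf.\ the remark after Definition~\ref{mext} and \cite[Proposition~4.6]{EHKM}): once you know that matrix extremeness forces the coefficients in any proper combination to be scalar multiples of unitaries, your argument for the second direction finishes immediately, with $A_i^{*}A_i=I_k$ and $A_i^{*}XA_i=X$ falling out directly rather than via a further appeal to classical extremeness.
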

Using the above proposition, the claim that the matrix extreme points span $K$ reduces to the fact that the classical extreme points span $\Gamma_n(K).$
For more detail, we refer the reader to \cite{HL} and \cite{WW}.

\section{Matrix exposed points}
In this section, we investigate matrix exposed points of matrix convex sets, in the case $\mathbb{F} = \mathbb{C}$. The notion of a matrix exposed point originated in \cite{Kr}, and it was generalized to arbitrary infinite-dimensional vector spaces in \cite{KS22}.
The definition of a matrix exposed point aims to capture properties analogous  to those of a classical exposed point. Throughout this section we assume that $K$ is a matrix convex set in a dual  space $V,$ i.e., $V$ is the dual of some vector space $V^\prime.$ We endow $V$ with the weak$^*$ topology, and all linear maps that we discuss are assumed continuous with respect to this topology.


\begin{definition}\label{def511}
	Let $\textbf{$K$} = (K(n))_{n \in \mathbb{N}}$ be a matrix convex set in a dual vector space $V$. An element $X \in K(n)$ is called a \textbf{matrix exposed point} of \textbf{$K$} if there exists a continuous linear map $\Phi : V \to M_n(\C)$ and a self-adjoint matrix $\alpha \in SM_n(\C)$ such that the following conditions hold:\looseness=-1
	\begin{enumerate}[(a)]
		\item \label{p1} for all positive integers $r$ and all $Y \in K(r)$, we have $\Phi_r(Y) \preceq \alpha \otimes I_r;$ 
		
		\item \label{p2} $\{ Y \in K(n) \ | \  \alpha \otimes I_n - \Phi_n(Y) \succeq 0 \text{ singular}\} = \{U^\ast X U \ | \ U \in M_n(\C) \text{ unitary}\}.$
	\end{enumerate}
	We say that such a pair $(\Phi, \alpha)$  \textbf{matricially exposes} the point $X$ and denote the set of all matrix exposed points of \textbf{$K$} by mexp($\textbf{$K$}$).
\end{definition}

\begin{remark}
Condition (b) of Definition \ref{def511}  implies a weak type of separation of the exposed point from the set.  Indeed,  if a linear map $\Phi$, matrix $\alpha \in SM_n(\C)$, and  $X \in K(n)$ are as in Definition \ref{def511}, then we have for any unitary matrix $U \in M_n(\C),$
	\begin{align*}
		\alpha \otimes I_n - \Phi_n(U^\ast X U) &= (I_n \otimes U^\ast) \big(  \alpha \otimes I_n - \Phi_n(X)\big) (I_n \otimes U). 
	\end{align*}
	From here it follows that the matrix $\alpha \otimes I_n - \Phi_n(X)$ is singular if and only if the matrix $\alpha \otimes I_n - \Phi_n(U^\ast X U)$ is singular. Condition (b) of Definition \ref{def511} demands that the only points $Y$ from $K(n)$ for which 
the expression $\alpha \otimes I_n - \Phi_n(Y)$ is singular  are the points from the unitary orbit of $X.$ Hence, if a point $X$ is matrix exposed, then so is any point from its unitary orbit (being exposed by the same pair $(\Phi, \alpha)$ as $X$). 
\end{remark}

The next theorem (see \cite[Poposition 3.5, Theorem 3.8]{KS22} and \cite[Proposition 6.19, Theorem 6.21]{Kr}) highlights the interplay between matrix extreme, matrix exposed, and classical exposed points. 

\begin{theorem}
    Let $\textbf{$K$} = (K(n))_{n \in \mathbb{N}}$ be a matrix convex set in a vector space $V.$ Then the following hold:
    \begin{enumerate}[(a)]        
    \item Every matrix exposed point $X$ in $K(n)$ is a classical exposed point of $K(n)$.
    \item Every matrix exposed point is matrix extreme.
    \item A matrix extreme point is matrix exposed if and only if it is also an exposed point in the classical sense.
    \end{enumerate}
\end{theorem}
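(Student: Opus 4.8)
The plan is to prove the three assertions essentially in the order (a), (b), (c), since (a) feeds into (c) and (b) is the technical heart of the matter. For part (a), I would start from a pair $(\Phi,\alpha)$ that matricially exposes $X\in K(n)$ and produce a scalar-valued exposing functional on $K(n)$. Since $\Phi_n$ is a linear map $M_n(V)\to M_n(M_n(\C))$ and $\alpha\otimes I_n\succeq\Phi_n(Y)$ for every $Y\in K(n)$ with equality (i.e.\ singularity of the difference) exactly on the unitary orbit of $X$, I would pair the matrix inequality $\alpha\otimes I_n-\Phi_n(Y)\succeq 0$ with a suitable positive functional on $M_n(M_n(\C))$. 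Concretely, if $v$ is a unit vector in the kernel of $\alpha\otimes I_n-\Phi_n(X)$, then $Y\mapsto \langle(\alpha\otimes I_n-\Phi_n(Y))v,v\rangle$ is a weak$^*$-continuous affine functional on $K(n)$ that is nonnegative everywhere and vanishes at $X$; the only subtlety is to check that it does not vanish anywhere else on $K(n)$, which is where one must use that the full matrix $\alpha\otimes I_n-\Phi_n(Y)$ being merely PSD does not force it to be singular unless $Y$ lies in the unitary orbit. One needs to choose $v$ generically (or argue by a dimension/genericity count over the finitely many kernel directions) so that positivity is strict off the orbit; this genericity argument is the one place requiring care, but it is standard.

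For part (b), I would show directly from Definition~\ref{mext} that a matrix exposed $X\in K(n)$ is matrix extreme. Suppose $X=\sum_{i=1}^k\gamma_i^\ast X^i\gamma_i$ is a proper matrix convex combination with $X^i\in K(n_i)$ and each $\gamma_i\in M_{n_i,n}(\C)$ onto. Applying the exposing map $\Phi$ and its ampliations and using the matrix affine property of ampliations together with condition (a), I would compute
\[
\alpha\otimes I_n-\Phi_n(X)=\sum_{i=1}^k(\gamma_i\otimes I_n)^\ast\big(\alpha\otimes I_{n_i}-\Phi_{n_i}(X^i)\big)(\gamma_i\otimes I_n),
\]
where each summand is PSD. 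Since the left side is singular, there is a unit vector $v$ with $(\gamma_i\otimes I_n)^\ast(\alpha\otimes I_{n_i}-\Phi_{n_i}(X^i))(\gamma_i\otimes I_n)v=0$ for every $i$, hence $(\alpha\otimes I_{n_i}-\Phi_{n_i}(X^i))$ has nontrivial kernel for each $i$ (using surjectivity of $\gamma_i$, so that $(\gamma_i\otimes I_n)v\neq 0$ for an appropriate choice of $v$ in the kernel — again a small genericity step). Then condition (b) applied at level $n_i$ — or rather its natural analogue; here I would need the observation that condition (b) as stated is at level $n$ only, so I must instead argue that $\alpha\otimes I_{n_i}-\Phi_{n_i}(X^i)$ singular forces $X^i$ into a compression-closed set and then invoke that $X$ is matrix extreme in the classical-plus-orbit sense — forces $n_i=n$ and $X^i\sim_u X$. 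This reduction is the main obstacle: one must carefully relate singularity at the amplified levels $n_i$ back to the single-level exposing condition, and I would handle it by combining the known fact (stated in the excerpt) that matrix exposed points are matrix extreme is what we are proving, so instead I would lean on the alternative route: show $X$ cannot be a proper matrix convex combination of points not unitarily equivalent to it by a separation argument using $(\Phi,\alpha)$ directly, treating the direct sum $\bigoplus X^i$ as a single point of $K(\sum n_i)$ and applying condition (a) to it.

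For part (c), the forward direction is immediate from (a): a matrix extreme point that is matrix exposed is classically exposed. For the converse, suppose $X\in K(n)$ is matrix extreme and also classically exposed in $K(n)$ by some weak$^*$-continuous affine functional $\ell$ with $\ell(X)=\max_{K(n)}\ell$ and $X$ the unique maximizer. The task is to upgrade $\ell$ to a matricial exposing pair $(\Phi,\alpha)$. I would take $\Phi$ to be built from $\ell$ in a way that is unitarily covariant — roughly, symmetrize over the unitary group $U(n)$ so that the resulting $\Phi_n$ has the whole unitary orbit of $X$ (not just $X$) as its exposed face — and choose $\alpha$ to be the corresponding bound from condition (a), which exists because $K$ is not assumed compact but $\ell$ is bounded on $K(n)$; extending the bound to all levels $r$ uses matrix convexity (any $Y\in K(r)$ compresses into $K(n)$-data via isometries, controlling $\Phi_r(Y)$). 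The delicate point is ensuring condition (b) holds exactly — that no point outside the unitary orbit of $X$ makes $\alpha\otimes I_n-\Phi_n(Y)$ singular — and this is precisely where matrix extremality of $X$ is used: if some $Y$ outside the orbit achieved singularity, one could extract from the kernel a proper matrix convex decomposition of $X$ contradicting matrix extremality. I expect this last implication (classically exposed $+$ matrix extreme $\Rightarrow$ matrix exposed) to be the hardest, as it requires constructing the map $\Phi$ rather than merely dissecting a given one; the compactness-free setting of the excerpt means one should cite \cite[Theorem 3.8]{KS22} or \cite[Theorem 6.21]{Kr} for the technical construction and verify the hypotheses are met.
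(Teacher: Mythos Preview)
Your overall architecture for (a) and (b) is right in spirit---pair the matrix inequality with a vector state coming from the kernel of $\alpha\otimes I_n-\Phi_n(X)$---but the crucial structural lemma that makes it work is missing, and your proposed substitute (``choose $v$ generically'') cannot fill the gap. The paper's proof rests on the fact (cited from \cite[Proposition 3.4]{KS22} and \cite[Corollary 5.25]{Kr}) that the kernel of $\alpha\otimes I_n-\Phi_n(X)$ is \emph{one-dimensional}, and moreover that if $v=\sum_j x_j\otimes e_j$ spans it, then the components $(x_j)_j$ form a basis of $\C^n$. There is thus no freedom to choose $v$ generically: the kernel vector is essentially unique. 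Both properties are then used in (a): if $\varphi(Y)=a$ then $v\in\ker(\alpha\otimes I_n-\Phi_n(Y))$, so this matrix is singular, so $Y=U^*XU$ for some unitary $U$; but then $(I_n\otimes U)v$ also lies in the one-dimensional kernel, forcing $(I_n\otimes U)v=\lambda v$, and the basis property of the $x_j$ forces $U=\lambda I$, hence $Y=X$. Without this lemma you cannot get from ``$Y$ is in the unitary orbit'' to ``$Y=X$'', which is what classical exposedness requires. The same kernel lemma is what the paper invokes for (b); your attempt there correctly identifies the obstacle (condition (b) of Definition~\ref{def511} is stated only at level $n$) but does not resolve it.

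For (c), your idea of symmetrizing the scalar functional $\ell$ over $U(n)$ is not the paper's route and is too vague to assess: it is unclear what linear map $\Phi:V\to M_n(\C)$ such an average would produce, or why condition~(a) of Definition~\ref{def511} would hold at all levels $r$. The paper instead argues as follows: since $X$ is matrix extreme, $X\notin L:=\mathrm{mconv}\big(K(n)\setminus\{U^*XU:U\text{ unitary}\}\big)$, and one wants to matricially separate $X$ from $L$ via a modified Effros--Winkler argument. The difficulty is that $L$ need not be closed, so the paper appeals to the ordered-extension-field separation technique of \cite{NT} to produce a candidate pair $(\Phi,\alpha)$, and then uses matrix extremity once more to verify the exposing property. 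Your final sentence does defer to \cite{KS22,Kr} for this construction, which is reasonable for a sketch, but the symmetrization idea preceding it should be dropped.
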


\begin{proof}[Main ideas of the proof]
    To prove (a) we want to produce from the given pair $(\Phi, \alpha)$ that matricially exposes $X$ a classical exposing pair $(\varphi, a),$ where $\varphi: M_n(V) \to \CC$ is a continuous functional and $a \in \RR$ with $\varphi(X) = a$ and $\varphi(Y) < a$ for all $Y \in K(n) \backslash \{X\}.$ An immediate idea is to choose a vector $x = \sum_{j=1}^n x_j \otimes e_j \in \mathbb{C}^n \otimes \mathbb{C}^n$ in the kernel of $\alpha \otimes I_n - \Phi_n(X)$ and define
\begin{equation}\label{eq: exp fn}
    \varphi(Y) = x^\ast \Phi_n(Y) x, \quad a = x^\ast (\alpha \otimes I_n) x \in \RR.
\end{equation}
Now clearly $\varphi(X) =a,$ but the vector $x$ also needs to be chosen so that the pair $(\varphi,a)$ has the appropriate separation properties. This choice is justified by \cite[Proposition 3.4]{KS22} and \cite[Corollary 5.25, Remark 5.26]{Kr}, which state that the kernel of $\alpha \otimes I_n - \Phi_n(X)$ is one-dimensional. Moreover, the components $(x_j)_j$ of the vector $x = \sum_{j=1}^n x_j \otimes e_j \in \mathbb{C}^n \otimes \mathbb{C}^n$ that spans the kernel form a basis of $\CC^n.$ It is now easy to check that the pair $(\varphi, a)$ as in \eqref{eq: exp fn} exposes $X$ in $K(n).$ To prove (b), one again exploits the properties of the vector spanning the kernel of $\alpha \otimes I_n - \Phi_n(X).$

For (c), we assume that the matrix extreme point $X$ is exposed, e.g., by the pair $(\varphi, a),$ where $\varphi: M_n(V) \to \CC$ is a continuous functional and $a \in \RR.$ We want to extend the weak separation of $X$ from $K(n)$ given by the pair $(\varphi, a)$ to a matricial weak (Hahn-Banach) separation of $X$ from $K(n)$ given by a pair $(\Phi, \alpha)$ as in Definition \ref{def511}. For that, we rely on a modified version of the Effros-Winkler separation technique \cite{EW}. Indeed, since the point $X$ is matrix extreme, we have by definition that $X$ is not contained in $L:=$ mconv$(K(n)\backslash\{U^\ast X U \ | \ U \in M_n(\mathbb{C}) \text{ is unitary}\})$, and the aim is to strictly (matricially) separate $X$ from $L.$ However, $L$ is a  matrix convex set that is not necessarily closed. Hence, we implicitly use the idea (see \cite{NT}) that one can strictly separate an outer point from a (not necessarily closed) convex set with a functional with values in an ordered extension field of $\RR.$ Then a modified version of the Effros-Winkler technique is applied to produce a candidate for the matricially exposing pair.
Finally,  the fact that the point $X$ is matrix extreme, and hence $X \notin L,$ is used to assert that the candidate indeed has the right exposing properties.
\end{proof}

\subsection{A density and a spanning result}

In the classical setting, if $C$ is a compact convex subset of a normed space $V$, then the exposed points of $C$ not only span $C$, but they are also dense in the extreme points. Below we prove a free analogue of this statement. Key to the proof is the following proposition \cite[Proposition 3.13]{KS22}, which gives an analogue of Proposition \ref{pr: gama ext} for exposed points.

\begin{proposition} \label{tr19}
	Let $\textbf{$K$} = (K(m))_{m \in \mathbb{N}}$ be a matrix convex set and $X \in K(r)$. 
 \begin{enumerate}[(a)]
     \item Let $\gamma \in M_{r, n}(\C)$ be a surjective matrix with $\text{\textnormal{tr}}(\gamma^\ast \gamma)=1$ such that the point
		$(\gamma^\ast \gamma, \gamma^\ast X \gamma)$ is exposed in $\Gamma_n(\textbf{$K$}).$ Then $X$ is a matrix exposed point of \textbf{$K$}.
  \item If $X$ is matrix exposed in \textbf{$K$}, then for any invertible $\gamma \in M_r(\C)$ with $\text{\textnormal{tr}}(\gamma^\ast \gamma)=1,$ the point
		$(\gamma^\ast \gamma, \gamma^\ast X \gamma)$ is exposed in $\Gamma_r(\textbf{$K$}).$
	\end{enumerate}
\end{proposition}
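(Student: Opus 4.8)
\textbf{Proof proposal for Proposition \ref{tr19}.}

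The plan is to directly translate the exposing data across the correspondence $X \leftrightarrow (\gamma^\ast\gamma,\gamma^\ast X\gamma)$, mirroring how Proposition \ref{pr: gama ext} transports matrix extreme points to classical extreme points of $\Gamma_n(\textbf{$K$})$. The underlying principle in both directions is that a linear-map-plus-matrix exposing pair $(\Phi,\alpha)$ for $X$ in the matrix-convex sense is essentially the same data as a classical exposing functional for the convex set $\Gamma_n(\textbf{$K$})$, once one pairs $\Phi$ against the ``$\gamma^\ast(\cdot)\gamma$'' slot and $\alpha$ against the ``$\gamma^\ast\gamma$'' slot. So the whole proof is the bookkeeping of this dictionary in each direction.

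For (a): I am given $\gamma \in M_{r,n}(\C)$ surjective with $\mathrm{tr}(\gamma^\ast\gamma)=1$ and a classical exposing functional $\Lambda$ on $M_n(\C)\times M_n(V)$, with real value $c$, such that $\Lambda(\gamma^\ast\gamma,\gamma^\ast X\gamma)=c$ and $\Lambda < c$ strictly on $\Gamma_n(\textbf{$K$})$ away from the point $(\gamma^\ast\gamma,\gamma^\ast X\gamma)$. Write $\Lambda(A,B) = \psi(A) + \chi(B)$ where $\psi$ is a functional on $M_n(\C)$ and $\chi$ a functional on $M_n(V)$. A functional on $M_n(V)$ is, by the usual identification, given by $\chi(B) = \mathrm{tr}(\beta^\ast \Phi_n(\widetilde{B})\,?\,)$ — more precisely, any such $\chi$ arises as $B \mapsto \mathrm{tr}(C^\ast \Phi_n(\cdot))$ for a suitable linear map $\Phi:V\to M_n(\C)$ and fixed matrix $C$; one repackages this so that, for every $k$ and every $Z\in K(k)$ and every $\delta\in M_{k,n}(\C)$ with $\mathrm{tr}(\delta^\ast\delta)=1$, the inequality $\Lambda(\delta^\ast\delta,\delta^\ast Z\delta)\le c$ becomes, after using $\delta^\ast\Phi_k(Z)\delta$-type manipulations and the cyclicity of the trace, exactly a statement of the form $\Phi_k(Z) \preceq \alpha\otimes I_k$ for an appropriate self-adjoint $\alpha$ built from $\psi$ and $c$. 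The homogeneity of $\Gamma_n$ under scaling (i.e.\ that one may take $\mathrm{tr}(\delta^\ast\delta)$ arbitrary by rescaling $\delta$) is what upgrades the single scalar inequality into the full family of LMIs giving condition (a) of Definition \ref{def511}. For condition (b): if $\alpha\otimes I_n - \Phi_n(Y)$ is singular for some $Y\in K(n)$, test $\Lambda$ against $(\eta^\ast\eta,\eta^\ast Y\eta)$ for suitable $\eta$ and the null vector; singularity forces $\Lambda$ to attain its maximum $c$ there, so by strict exposedness of $(\gamma^\ast\gamma,\gamma^\ast X\gamma)$ one must have $(\eta^\ast\eta,\eta^\ast Y\eta) = (\gamma^\ast\gamma,\gamma^\ast X\gamma)$, and then a unitary-equivalence extraction (exactly as in the proof that the null space considerations in Definition \ref{def511} force the unitary orbit) yields $Y\sim_u X$. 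Conversely, the reverse direction gives that each member of the orbit is caught, so (b) holds with equality of sets.

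For (b): now $X$ is matrix exposed by a pair $(\Phi,\alpha)$ as in Definition \ref{def511}, and $\gamma\in M_r(\C)$ is invertible with $\mathrm{tr}(\gamma^\ast\gamma)=1$. Define the candidate classical functional on $M_r(\C)\times M_r(V)$ by $\Lambda(A,B) = \mathrm{tr}\big((\gamma^{-\ast}\alpha\gamma^{-1})\,A\big)\cdot(\text{const}) - \mathrm{tr}\big(?\,\Phi_r(\widetilde B)\big)$ — concretely, the functional that sends $(\delta^\ast\delta,\delta^\ast Z\delta)$ to $\mathrm{tr}\big(\delta\gamma^{-\ast}\,(\,\alpha\otimes I_k - \Phi_k(Z)\,)\,?\big)$; the cleanest route is to observe that for $(\delta^\ast\delta,\delta^\ast Z\delta)\in\Gamma_r(\textbf{$K$})$ one has, by condition (a) of Definition \ref{def511} applied at level $k$ (where $Z\in K(k)$, $\delta\in M_{k,r}(\C)$), that $\delta^\ast\big(\alpha\otimes I_k - \Phi_k(Z)\big)\delta \succeq 0$, hence its trace against $\gamma^{-1}\gamma^{-\ast}$ is $\ge 0$, with a clean reformulation as $\Lambda(\delta^\ast\delta,\delta^\ast Z\delta)\le c$. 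Equality forces $\delta^\ast(\alpha\otimes I_k - \Phi_k(Z))\delta$ to have its range inside the kernel of $\gamma^{-1}\gamma^{-\ast}$, which is $\{0\}$ since $\gamma$ is invertible, so $\delta^\ast(\alpha\otimes I_k - \Phi_k(Z))\delta=0$; combined with positivity at level $k$ this forces $\alpha\otimes I_k - \Phi_k(Z)$ to be singular (using that $\delta$ has full column rank $r$), so by condition (b) of Definition \ref{def511} we get $Z\in K(n)$ with $Z\sim_u X$ and, from $\delta^\ast(\alpha\otimes I - \Phi(Z))\delta=0$ together with one-dimensionality of the relevant kernel, that $\delta$ is essentially $\gamma$ up to the action already built in — so $(\delta^\ast\delta,\delta^\ast Z\delta)=(\gamma^\ast\gamma,\gamma^\ast X\gamma)$. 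This gives strict classical exposedness at that point.

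The main obstacle I expect is part (b), and specifically the equality-case analysis: turning ``$\mathrm{tr}$ of a positive semidefinite matrix is zero'' back into ``the point is exactly in the unitary orbit at level $r$'' requires invertibility of $\gamma$ in an essential way (this is why the hypothesis is ``invertible'' rather than ``surjective'' as in part (a)), and it requires carefully invoking the one-dimensionality of the kernel of $\alpha\otimes I_n - \Phi_n(X)$ cited from \cite[Proposition 3.4]{KS22} / \cite[Corollary 5.25]{Kr} to pin down $\delta$ uniquely rather than merely up to unitary equivalence. The direction (a) is comparatively mechanical — it is the same separation-data translation used for Proposition \ref{pr: gama ext}, with the added (but routine) point that an inequality valid for all rescalings of $\delta$ is equivalent to a semidefinite inequality. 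A secondary technical nuisance in both parts is keeping straight which tensor leg carries the unitary/conjugation and ensuring the identifications $M_n(V)^\ast \cong \{$linear maps $V\to M_n(\C)\}$ are applied consistently; I would fix this notation once at the start of the proof and then suppress it.
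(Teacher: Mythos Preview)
The paper does not actually prove Proposition~\ref{tr19}; it simply states the result with a citation to \cite[Proposition 3.13]{KS22}. So there is no in-paper argument to compare your proposal against.

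As for the proposal itself: the overall strategy is the right one, and it is indeed the strategy used in \cite{KS22} --- translate the exposing data back and forth through the identification of functionals on $M_n(\C)\times M_n(V)$ with pairs $(\alpha,\Phi)$, using trace pairings and the one-dimensionality result for $\ker(\alpha\otimes I_n - \Phi_n(X))$. But what you have written is a plan, not a proof. Several formulas are left as literal placeholders (the ``$?$'' symbols), the precise definition of the candidate functional $\Lambda$ in part (b) is never fixed, and the passage from ``the trace of a positive semidefinite matrix against $\gamma^{-1}\gamma^{-\ast}$ vanishes'' to ``$(\delta^\ast\delta,\delta^\ast Z\delta)=(\gamma^\ast\gamma,\gamma^\ast X\gamma)$'' is asserted rather than carried out. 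In particular, once you know $\alpha\otimes I_k - \Phi_k(Z)$ is singular, condition (b) of Definition~\ref{def511} only tells you about level $n=r$; you still need to argue that $k=r$ (this is where surjectivity of $\delta$ and invertibility of $\gamma$ interact, as you note), and then that $\delta$ is pinned down up to the unitary freedom already absorbed in the orbit, which is exactly where the one-dimensional kernel fact is doing real work. You have correctly identified this as the crux, but you have not executed it. If you want this to stand as a proof you need to write down the exact functional, verify the inequality, and carry the equality case through to the end without ellipses.
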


In the classical setting, Milman's converse of the Krein-Milman theorem (see \cite{Ba}) states that if $E$ is any subset of a compact convex set $K$ with the property that $\overline{\text{conv}}(E) = K$, then $\overline{E}$ includes the extreme points of $K$. When applied to the set of exposed points, this converse implies that the exposed points are dense in the extreme points. In the matrix convex setting, because the matricial Krein-Milman theorem does not have a full converse, an analogous result for matrix exposed points cannot be as easily deduced as it is in the classical setting. The following result shows that, nonetheless, the matrix exposed points are dense in the matrix extreme points.

\begin{theorem}\label{th316}
	Let \textbf{$K$} be a compact matrix convex set in a normed vector space $V.$ Then the matrix exposed points of \textbf{$K$} are dense in the matrix extreme points of \textbf{$K$}.
\end{theorem}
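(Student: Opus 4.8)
The strategy is to transfer the density statement at the matrix level to a density statement at the level of the classical convex sets $\Gamma_n(\textbf{$K$})$, where we may invoke the classical Straszewicz theorem together with Milman's converse. Fix a matrix extreme point $X \in K(n)$; we wish to approximate it, in the norm of $M_n(V)$, by matrix exposed points. Choose any surjective (indeed invertible, taking $r = n$) matrix $\gamma_0 \in M_n(\C)$ with $\operatorname{tr}(\gamma_0^\ast \gamma_0) = 1$, for instance $\gamma_0 = \tfrac{1}{\sqrt n} I_n$. By Proposition \ref{pr: gama ext}, the point $p_0 := (\gamma_0^\ast \gamma_0, \gamma_0^\ast X \gamma_0)$ is a classical extreme point of the compact convex set $\Gamma_n(\textbf{$K$}) \subseteq M_n(\C) \times M_n(V)$, which sits inside a normed space. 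By Straszewicz's theorem (cited in the introduction), the classical exposed points of $\Gamma_n(\textbf{$K$})$ are dense in its extreme points, so there is a sequence of exposed points $p_k = (\Delta_k, Z_k)$ of $\Gamma_n(\textbf{$K$})$ converging to $p_0$; in particular $\Delta_k \to \gamma_0^\ast \gamma_0 = \tfrac1n I_n$ and $Z_k \to \tfrac1n X$.

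The next step is to produce, from each exposed point $p_k$ of $\Gamma_n(\textbf{$K$})$, a matrix exposed point $X_k$ of $\textbf{$K$}$ that is close to $X$. Writing $p_k = (\gamma_k^\ast \gamma_k, \gamma_k^\ast Y_k \gamma_k)$ with $Y_k \in K(m_k)$ and $\gamma_k \in M_{m_k, n}(\C)$ surjective and normalized (as in the description of $\Gamma_n$, using \eqref{izo} to reduce to a surjective $\gamma_k$), Proposition \ref{tr19}(a) tells us that $Y_k$ is a matrix exposed point of $\textbf{$K$}$. It remains to check that $Y_k$ (or a unitary conjugate of it) is close to $X$ and that we can take $m_k = n$. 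Since $\Delta_k = \gamma_k^\ast \gamma_k \to \tfrac1n I_n$, for large $k$ the matrix $\gamma_k^\ast \gamma_k$ is invertible, forcing $m_k \geq n$; and because $\gamma_k$ is $m_k \times n$ of rank $n$ with $\gamma_k^\ast \gamma_k \to \tfrac1n I_n$, a compactness/polar-decomposition argument shows $\gamma_k = \iota_k \beta_k$ where $\iota_k \in M_{m_k, n}(\C)$ is an isometry and $\beta_k \in M_n(\C)$ satisfies $\beta_k^\ast \beta_k = \Delta_k \to \tfrac1n I_n$, so $\sqrt n\,\beta_k$ converges (along a subsequence) to a unitary $W$. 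Setting $X_k := \iota_k^\ast Y_k \iota_k \in K(n)$, which by \eqref{izo} is matrix exposed whenever $Y_k$ is (matrix exposedness is a unitary-orbit and compression-invariant property here, or one re-applies Proposition \ref{tr19}(a) with the isometry absorbed), we get $\gamma_k^\ast Y_k \gamma_k = \beta_k^\ast X_k \beta_k$, hence $\beta_k^\ast X_k \beta_k = Z_k \to \tfrac1n X$. Multiplying on both sides by $(\sqrt n \beta_k)^{-1} \to W^{-1} = W^\ast$ (using that $\beta_k$ is eventually invertible) yields $X_k \to W^\ast X W$ along the subsequence. Since $W^\ast X W$ is a unitary conjugate of $X$ and matrix exposed points form unitarily invariant sets, replacing $X_k$ by $W X_k W^\ast$ (which is again matrix exposed and still converges, to $X$) completes the approximation.

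\textbf{Main obstacle.} The delicate point is the passage from the $m_k \times n$ matrices $\gamma_k$ of a priori unbounded row-dimension $m_k$ to $n \times n$ matrices, i.e., showing that the exposed points of $\Gamma_n(\textbf{$K$})$ converging to $p_0$ may be taken with their ``witnessing'' $Y_k$ compressible to level $n$ without losing matrix exposedness or the convergence. This is where compactness of $\textbf{$K$}$ and the rank stabilization $\operatorname{rank}\gamma_k = n$ for large $k$ are essential: the invertibility of the limit $\gamma_0^\ast \gamma_0$ controls the rank, and the identity \eqref{izo} lets us replace $\gamma_k$ by a surjective $n \times n$ matrix acting on the compression $\iota_k^\ast Y_k \iota_k \in K(n)$. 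One must verify carefully that Proposition \ref{tr19}(a) still applies after this compression — equivalently, that the point $(\gamma_k^\ast\gamma_k, \gamma_k^\ast Y_k \gamma_k)$ being exposed in $\Gamma_n(\textbf{$K$})$ is unaffected by rewriting it via \eqref{izo} — and that the subsequential unitary limit $W$ can be absorbed using the unitary invariance of $\operatorname{mexp}(\textbf{$K$})$ noted after Definition \ref{def511}. Modulo these bookkeeping steps, which are routine given compactness, the proof reduces cleanly to Straszewicz's theorem applied to $\Gamma_n(\textbf{$K$})$.
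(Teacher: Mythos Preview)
Your approach is essentially the same as the paper's: lift the matrix extreme point to a classical extreme point of $\Gamma_n(\textbf{$K$})$ via Proposition~\ref{pr: gama ext}, approximate by classical exposed points (Klee's version of Straszewicz, since $V$ is only assumed normed), pull back using Proposition~\ref{tr19}(a), and then use polar decomposition to control the unitary ambiguity. So the strategy is correct.

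However, your ``main obstacle'' is a phantom, and your handling of it introduces a false claim. You already chose $\gamma_k \in M_{m_k,n}(\C)$ \emph{surjective}, which forces $m_k \le n$; combined with $m_k \ge n$ (from invertibility of $\gamma_k^\ast\gamma_k$ for large $k$), you get $m_k = n$ outright. Hence $\gamma_k$ is an invertible $n\times n$ matrix, your isometry $\iota_k$ is a unitary, and there is no genuine compression step at all. The paper's proof makes exactly this observation and then runs the polar-decomposition estimate with $n\times n$ unitaries directly, avoiding the subsequence argument you use. In particular, the assertion that ``matrix exposedness is \ldots compression-invariant'' is \emph{not} true in general (compressing a matrix exposed point by a proper isometry need not yield a matrix exposed point), and you should not invoke it; fortunately you don't need it once you recognize $m_k = n$. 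With that simplification, the bookkeeping you flag as delicate disappears and the argument is clean.
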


\begin{proof}
	For each $n$, equip the space $M_n(V)$ with the operator norm induced by the norm on $V$ and equip the product space $M_n(\C) \times M_n(V)$ with the $L^2$ product norm
	$$
	\|(\mu, C)\| = \sqrt{\|\mu\|^2 + \|C\|^2}.
	$$
	Let $X \in K(n)$ be a matrix extreme point, $\gamma \in M_n(\C)$ an invertible matrix with tr$(\gamma^* \gamma)=1$, and $\epsilon >0.$ Then by \cite[Proposition 2.14]{HL}, the tuple $(\gamma^* \gamma, \gamma^* X\gamma)$ is a classical extreme point of $\Gamma_n(\textbf{$K$}).$ By \cite{K}, for any $\epsilon_1 >0$ (which we determine later) there is an exposed point $(\delta^* \delta, \delta^* Y\delta)$ of $\Gamma_n(\textbf{$K$})$ for some $r \in \mathbb{N},$ $Y\in K(r)$ and surjective $\delta \in M_{r,n}(\C)$ with tr$(\delta^*\delta)=1$ in the $\epsilon_1$-neighbourhood of $(\gamma^* \gamma, \gamma^* X\gamma).$
	By part (a) of Proposition \ref{tr19}, $Y$ is a matrix exposed point of \textbf{$K$}.
	
	We now argue that $r=n$ and $Y$ is of the same size as $X.$ Indeed, since $\gamma$ is invertible, so is $\gamma^* \gamma.$ Since $\delta^* \delta$ is $\epsilon_1$ close to $\gamma^* \gamma,$ we have that $\delta^* \delta$ is invertible as well (here we can take $\epsilon_1$ as small as needed). But since the rank of $\delta^* \delta$ is at most $r$ by assumption, we deduce that $r=n.$ 
	
	We proceed to prove that $X$ is close to a unitary conjugate $Y^\prime$ of $Y.$ As noted before, matrix exposed points are closed under conjugation by unitaries, hence $Y^\prime$ is also matrix exposed.  Firstly, by continuity of the functional calculus, for any $\epsilon_2 > 0,$ there is an $\epsilon_1 > 0$ (here we again make the above $\epsilon_1$ smaller if needed) such that if $\delta^* \delta$ is $\epsilon_1$ close to $\gamma^* \gamma,$ then $|\delta|$ is $\epsilon_2$ close to $|\gamma|.$ 
	Writing $\gamma = U_1 |\gamma|$ and $\delta = U_2 |\delta|$ in their polar decompositions with $U_1,U_2$ unitaries, we have that
	$$
	\|U_1^* \gamma - U_2^* \delta\| = \|\,|\gamma| - |\delta|\,\| < \epsilon_2,
	$$
	hence
	$$
	\|U_2 U_1^*\gamma - \delta\| = \| U_2 (U_1^* \gamma - U_2^* \delta) \| \leq \|U_2\| \|U_1^* \gamma - U_2^* \delta\| \leq \epsilon_2.
	$$
	Now denote $U_2U_1^*$ by $U$ and compute 
	\begin{align*}
		\| \delta^* Y\delta - \gamma^* U^*Y U\gamma \| \leq\  &\| \delta^* Y\delta - \delta^* YU \gamma\| + \|\delta^* YU \gamma - \gamma^* U^*Y U\gamma \|\\ \leq\ 
		& \|\delta^* Y \| \| \delta - U \gamma\| + \|(\delta - U\gamma)^* \| \|YU\gamma \|\\ \leq \ 
		& \|\delta^* Y \| \epsilon_2 + \epsilon_2 \|YU\gamma \|.
	\end{align*}
	Finally, 
	\begin{align*}
	\| X - U^* Y U \|  = & \|(\gamma^*)^{-1} (\gamma^* X \gamma - \gamma^* U^* Y U \gamma )\gamma^{-1} \|\\ \leq&\ 
	\|(\gamma^*)^{-1}\|\, \|\gamma^{-1}\| \, \|\gamma^* X \gamma - \gamma^* U^* Y U \gamma\|\\ \leq&\
	 \|(\gamma^*)^{-1}\|\, \|\gamma^{-1}\| \  \big(\|\gamma^* X \gamma -  \delta^* Y\delta\| + 
	\|\delta^* Y\delta -  \gamma^* U^* Y U \gamma\| \big)\\
	\leq&\ \|(\gamma^*)^{-1}\|\, \|\gamma^{-1}\| \ 
	(\epsilon_1 + \|\delta^* Y \| \epsilon_2 + \epsilon_2 \|YU\gamma \|),
	\end{align*}
	which can be made smaller than the given $\epsilon$ by a suitable choice of $\epsilon_1$ and $\epsilon_2.$
\end{proof}

We now present a spanning result for matrix exposed points. It extends the Straszewicz theorem \cite[Section II.2]{Ba}, more precisely Klee's generalization \cite{K}, to the matrix convex setting.

\begin{theorem}[\textbf{The Straszewicz-Klee theorem for matrix convex sets}]\label{th213}
	Let $K$ be a compact matrix convex set in a normed vector space $V.$ Then $\text{\textnormal{mexp}}(K) \neq \emptyset$ and
	$$
	K = \overline{\text{\textnormal{mconv}}}\,(\text{\textnormal{mexp}}(K)).
	$$
\end{theorem}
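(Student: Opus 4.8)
The plan is to mirror the proof of Theorem \ref{th316}, upgrading the density statement there to a spanning statement, exactly as Klee's generalization of Straszewicz upgrades a density statement to a Krein-Milman-type conclusion in the classical setting. First I would invoke the Webster-Winkler Krein-Milman theorem (Theorem \ref{th: km}): since $K$ is compact, $\text{mext}(K)\neq\emptyset$ and $K=\overline{\text{mconv}}(\text{mext}(K))$. Because every matrix exposed point is matrix extreme, it suffices to show $\text{mexp}(K)\neq\emptyset$ and that $\text{mext}(K)\subseteq\overline{\text{mexp}}(K)$ in the levelwise sense; combining the latter with $K=\overline{\text{mconv}}(\text{mext}(K))$ and the fact that $\overline{\text{mconv}}$ is monotone and idempotent on closed sets then yields $K=\overline{\text{mconv}}(\text{mexp}(K))$. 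The inclusion $\text{mext}(K)\subseteq\overline{\text{mexp}}(K)$ is precisely the content of the density Theorem \ref{th316}, so the bulk of the work is already done; nonemptiness of $\text{mexp}(K)$ follows since $\text{mext}(K)$ is nonempty and is contained in the closure of $\text{mexp}(K)$, which forces $\text{mexp}(K)\neq\emptyset$.

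In more detail, I would first fix $n$ and a matrix extreme point $X\in K(n)$; by Theorem \ref{th316} there is a sequence of matrix exposed points $Y_k\in K(n)$ (the argument there shows the approximants can be taken at the same level $n$) with $Y_k\to X$. Thus $X\in\overline{\text{mexp}(K)(n)}$, so $\text{mext}(K)(n)\subseteq\overline{\text{mexp}(K)(n)}$ for every $n$, and in particular $\text{mexp}(K)$ is nonempty since some level of $\text{mext}(K)$ is nonempty. Next, taking closed matrix convex hulls and using that $\overline{\text{mconv}}$ of a set contains $\overline{\text{mconv}}$ of any subset of its closure — i.e.\ $\overline{\text{mconv}}(\overline{\text{mexp}}(K))=\overline{\text{mconv}}(\text{mexp}(K))$, which holds because matrix convex combinations are continuous in their matrix arguments and $\overline{\text{mconv}}(\text{mexp}(K))$ is already closed — we get
$$
K=\overline{\text{mconv}}(\text{mext}(K))\subseteq\overline{\text{mconv}}(\overline{\text{mexp}}(K))=\overline{\text{mconv}}(\text{mexp}(K))\subseteq K,
$$
the last inclusion because $\text{mexp}(K)\subseteq K$ and $K$ is closed and matrix convex. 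Hence equality holds throughout.

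I expect the only genuinely delicate point to be the small lemma that $\overline{\text{mconv}}(E)=\overline{\text{mconv}}(\overline{E})$ for a bounded graded set $E$, i.e.\ that passing to the closure of the generating set does not enlarge the closed matrix convex hull. This is where compactness of $K$ (hence of each level, giving uniform bounds on the sizes of the matrices $\gamma_i$ appearing in matrix convex combinations) is used: given a point in $\overline{\text{mconv}}(\overline E)$, approximate it first by a finite matrix convex combination $\sum\gamma_i^*Z^i\gamma_i$ with $Z^i\in\overline E$, then replace each $Z^i$ by a nearby element of $E$ and use continuity of $(Z^1,\dots,Z^k)\mapsto\sum\gamma_i^*Z^i\gamma_i$ (for fixed $\gamma_i$) to stay within the prescribed tolerance, landing in $\overline{\text{mconv}}(E)$. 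Everything else is bookkeeping with the monotonicity and idempotency of the $\overline{\text{mconv}}$ operator, which follow directly from its definition as an intersection of closed matrix convex sets. For readers wanting the fine detail I would simply refer back to the proof of Theorem \ref{th316} and to \cite{K}, \cite{WW}.
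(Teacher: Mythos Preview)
Your proposal is correct and takes essentially the same approach as the paper: deduce the result from the Webster--Winkler Krein--Milman theorem (Theorem~\ref{th: km}) together with the density of matrix exposed points in matrix extreme points (Theorem~\ref{th316}), using the routine fact that $\overline{\text{mconv}}(\overline{E})=\overline{\text{mconv}}(E)$. The paper also notes an alternative route via Proposition~\ref{tr19}, reducing directly to the classical Klee theorem through the sets $\Gamma_n(K)$, but the argument you give is the one the paper presents as primary.
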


This theorem clearly follows from the fact that the matrix exposed points are dense in the matrix extreme points (Theorem \ref{th316}) and that the matrix extreme points span a compact matrix convex set (Theorem \ref{th: km}). An alternative proof goes along the lines of the presented proof of the matricial Krein-Milman theorem. In fact, Proposition \ref{tr19} can be used to reduce the matricial statement to the classical Klee theorem.
For more detail on the proof in the more general setting see \cite[Theorem 3.14]{KS22}, while we also note that \cite[Corollary 6.23]{Kr} uses a homogenization technique to tackle the finite-dimensional case.

\section{Free extreme points}

While matrix extreme points and matrix exposed points are natural extreme points for matrix convex sets, a shortcoming of both these types of extreme points is that there in fact may be too many of them. The classical Krein-Milman theorem not only guarantees that a compact convex set $C$ is the closed convex hull of its extreme points. Milman's converse also guarantees that if the closed convex hull of a set is $C$, then the closure of that set includes the extreme points. Thus it is of interest to determine the smallest class of extreme points that can recover a matrix convex set via matrix convex combinations. Before formally introducing the next family of extreme points, called free extreme points, let us illustrate the issue with an example. 

\begin{example}
\label{example:TooManyMatrixExtreme}
Let $X \in M_2 (\C^2)$ be the tuple
\[
X = \left(
\begin{bmatrix} 
1 & 0 \\
0 & -1
\end{bmatrix},\begin{bmatrix}
0 & 1 \\ 
1 & 0
\end{bmatrix}
\right).
\]
By setting $K:=\text{mconv} (X)$, we obtain that $K$ is a compact matrix convex set and that $K(1)$ is the closed unit disk in $\mathbb{R}^2$. Since the classical extreme points and classical exposed points of $K(1)$ are matrix extreme points and matrix exposed points of $K$, respectively, it follows that $K$ has infinitely many (nonunitarily equivalent) matrix extreme points and matrix exposed points. However, only a single element of $K$ is required to recover the set via matrix convex combinations.
\end{example}

This example highlights the possible redundancy of the matrix extreme points with respect to spanning properties. A matrix extreme point might be expressed as a compression of another matrix extreme point that lies at a higher level. A stronger extreme point condition was introduced by Kleski \cite{kls14} to address this issue, and points meeting this condition are now commonly called free extreme points. We note that free extreme points are closely tied to Arveson's notion of a boundary representation, which was developed earlier. This will be discussed in the following subsection.

\begin{definition}\label{fext}
	Let $K$ be a matrix convex set over $\FF^g$. A point $X \in K(n)$ is a \textbf{free extreme point} of $K$ if whenever $X$ is expressed as a matrix convex combination
 \[
 X = \sum_{i=1}^k \gamma_i^\ast Y^i \gamma_i
 \]
 where $\gamma_i \in M_{n_i,n}(\FF)$ are nonzero matrices with $\sum_{i=1}^k \gamma_i^\ast \gamma_i = I_n$, then for each $i$, either $n_i = n$ and $X \sim_u Y^i$ or $n_i > n$ and there exists a $Z^i \in K$ such that $X \oplus Z^i \sim_u Y^i$.  Denote the set of free extreme points of $K$ by fext$(K)$. 

\end{definition}

Intuitively, a point $X$ of $K$ is free extreme if it cannot be expressed as a nontrivial matrix convex combination of \textit{any} elements of $K$. In contrast, a point $X \in K(n)$ is matrix extreme if it cannot be expressed as a nontrivial matrix convex combination of elements of $K$ \textit{with size less than or equal to $n$}. The additional restrictions placed on free extreme points guarantee that they appear in every spanning set that consists of irreducible matrix tuples, as in the following theorem.

\begin{theorem}
    Let $K$ be a matrix convex set and let $E \subset K$ be a set of irreducible tuples that is closed under unitary conjugation. If $K = \textnormal{mconv}(E)$, then $\textnormal{fext}(K) \subset E$.
\end{theorem}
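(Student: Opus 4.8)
The plan is to show that any free extreme point $X \in \text{fext}(K)$ must already appear (up to unitary equivalence) in $E$, by exploiting a finite matrix convex decomposition of $X$ in terms of elements of $E$ together with the strong rigidity built into Definition \ref{fext}. Since $K = \text{mconv}(E)$, the point $X \in K(n)$ can be written as a matrix convex combination $X = \sum_{i=1}^k \gamma_i^\ast Y^i \gamma_i$ with each $Y^i \in E$ (so each $Y^i$ is irreducible), each $\gamma_i \in M_{n_i,n}(\FF)$, and $\sum_i \gamma_i^\ast \gamma_i = I_n$. After discarding any terms with $\gamma_i = 0$ and re-indexing, we may assume every $\gamma_i$ is nonzero, so this is a legitimate decomposition of the type considered in Definition \ref{fext}.

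Now apply the defining property of a free extreme point to this decomposition. For each $i$, either $n_i = n$ and $X \sim_u Y^i$, or $n_i > n$ and there is some $Z^i \in K$ with $X \oplus Z^i \sim_u Y^i$. The first case already gives what we want: $X$ is unitarily equivalent to an element of $E$, and since $E$ is closed under unitary conjugation, $X \in E$. So it suffices to rule out the possibility that \emph{every} term falls into the second case; in fact I will argue the second case cannot occur at all. If $n_i > n$ and $X \oplus Z^i \sim_u Y^i$, then $Y^i$ is unitarily equivalent to a nontrivial direct sum (note $X$ acts on a space of dimension $n \geq 1$ and $Z^i$ on a space of dimension $n_i - n \geq 1$, both strictly positive), so $Y^i$ is reducible. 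This contradicts the hypothesis that every element of $E$ is irreducible. Hence for every $i$ we are in the first case, $n_i = n$ and $X \sim_u Y^i \in E$, which gives $X \in E$ and therefore $\text{fext}(K) \subset E$.

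The only point requiring a little care — and the main (mild) obstacle — is the bookkeeping around zero coefficients and the nondegeneracy of the direct sum $X \oplus Z^i$: one must make sure that at least one term survives after deleting the $\gamma_i = 0$ summands (which holds because $\sum_i \gamma_i^\ast \gamma_i = I_n \neq 0$), and that $Z^i$ genuinely acts on a nonzero space so that $Y^i \sim_u X \oplus Z^i$ really is a reducing decomposition. Since a free extreme point is in particular a point of $K$, and since $K = \text{mconv}(E)$ literally means every element of $K$ is a \emph{finite} matrix convex combination of elements of $E$, no closure or approximation argument is needed; the proof is a direct application of Definition \ref{fext} together with irreducibility of the tuples in $E$.
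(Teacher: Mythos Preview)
Your proof is correct and takes essentially the same approach as the paper's: both write $X$ as a matrix convex combination of elements of $E$, then use the free extreme condition together with irreducibility of the $Y^i$ to force $X \sim_u Y^i$ for some $i$, and finally invoke closure of $E$ under unitary conjugation. The only cosmetic difference is that the paper phrases it as a contrapositive (if $X \notin E$ then $X$ is not free extreme), whereas you argue directly.
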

\begin{proof}
Suppose $X \in K \backslash E$ and $X \in \textnormal{mconv}(E)$. Then there exist $Y^i \in K(n_i)$ and $\gamma_i \in M_{n_i,n}(\C)$ such that
 \[
 X = \sum_{i=1}^k \gamma_i^\ast Y^i \gamma_i.
 \]
 Since $E$ is closed under unitary conjugation and $X \notin E$, it is not possible that $X \sim_u Y^i$ for any $i$. Furthermore, it is not possible that $X \oplus Z^i \sim_u Y^i$ for any $Z^i \in K$, as this would imply $Y^i$ is reducible and contradict the assumption that $E$ contains only irreducible tuples. It follows that $X$ cannot be a free extreme point of $K$.
\end{proof}

\begin{remark}
    In Example \ref{example:TooManyMatrixExtreme}, up to unitary equivalence, the matrix convex set $K$ has exactly one free extreme point, which is precisely the tuple $X$ that generates $K$. Thus free extreme points do give a minimal spanning set in that example. 
\end{remark}

So far, we have seen that the free extreme points are contained in every spanning set. However, not every compact matrix convex set is the closed matrix convex hull of its free extreme points. In fact, free extreme points may not exist at all. 

\begin{theorem}
    If $g \geq 1$, then there exist compact matrix convex sets in $M(\FF^g)$ that have no free extreme points. Similarly, if $g \geq 2$, then there exist compact matrix convex sets in $SM(\FF^g)$ that have no free extreme points. \end{theorem}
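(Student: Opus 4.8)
The plan is to produce explicit examples. The cleanest approach for the symmetric case $g \geq 2$ is to take a free spectrahedron (or matrix range) whose only candidates for free extreme points at finite levels are ruled out by a ``every point is a proper compression of something larger'' phenomenon. A natural candidate is a matrix convex set built so that its scalar level $K(1)$ is a disk (or a higher-dimensional Euclidean ball), since disks have no ``corners'': classically the extreme points of the disk are all non-exposed-in-the-polyhedral-sense and, more importantly, each boundary point of the disk will turn out to be a nontrivial compression of an irreducible tuple living at level $2$, and that level-$2$ tuple is in turn a compression of something at level $3$, and so on. I would therefore revisit the tuple $X$ of Example \ref{example:TooManyMatrixExtreme}, but instead of taking $\mathrm{mconv}(X)$, I would take a matrix convex set like the free (spin) ball $\mathcal{D}$ whose level-$1$ set is the Euclidean ball in $\RR^2$ and whose higher levels are strictly ``round'' in the sense that every tuple $Y \in \mathcal{D}(n)$ with $Y$ on the boundary can be written as $Y = \gamma^\ast Z \gamma$ for some $Z \in \mathcal{D}(2n)$ with $\gamma$ a proper nonzero contraction and $Z$ not merely $Y \oplus (\text{something})$. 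Concretely, one uses the fact that for the free ball the boundary at each level admits dilations to strictly larger irreducible pieces, so the free extreme point condition (Definition \ref{fext}) fails everywhere.

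The key steps, in order, would be: (1) Fix the ambient set --- for $g \geq 2$ in $SM(\FF^g)$ take $\mathcal{D}$ the matrix convex hull of the finitely many tuples needed to make $\mathcal{D}(1)$ the round ball, or more transparently take a free spectrahedron defined by a linear pencil with no ``flat'' directions in its boundary; for $g \geq 1$ in $M(\FF^g)$ (not necessarily self-adjoint) one can similarly use a matrix range / free disk, and the $g=1$ case is handled by a single operator whose matrix range is a genuine disk. (2) Show that $\mathcal{D}$ is compact (levelwise closed and bounded), which is routine once one realizes it as an intersection of matrix convex sets with compact level-$1$ sets, or directly as a matrix range of a bounded tuple. (3) The heart of the matter: show $\mathrm{fext}(\mathcal{D}) = \emptyset$. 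For this, take any $X \in \mathcal{D}(n)$; if $X$ is reducible it is automatically not free extreme (write it as a direct sum, which is a matrix convex combination with the $\gamma_i$ being coordinate projections, and observe the summands are strictly smaller and not unitarily equivalent to $X$ unless $X$ was a repeated block, which one rules out separately or absorbs). If $X$ is irreducible and on the boundary, exhibit a proper dilation: find $Z \in \mathcal{D}(m)$ with $m > n$, $Z$ not of the form $X \oplus (\cdot)$ up to unitary equivalence, and an isometry $\gamma$ with $X = \gamma^\ast Z \gamma$; this is where ``roundness'' of $\mathcal{D}$ is used --- the boundary admits no boundary representations, equivalently no maximal dilations at finite level, so every finite-level boundary point dilates nontrivially. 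If $X$ is interior, it is a genuine matrix convex combination of boundary points of strictly the same or different sizes and again fails the free extreme condition.

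The main obstacle I anticipate is step (3) in the irreducible-boundary case: one must verify that the dilation $Z$ can be chosen \emph{not} unitarily equivalent to $X \oplus Z^i$ for any $Z^i \in \mathcal{D}$ --- otherwise the free extreme condition would actually be satisfied. This is precisely the subtle clause in Definition \ref{fext}, and it is where the connection to Arveson's boundary representations (alluded to in the next subsection of the paper) does the real work: $X$ is free extreme iff its associated representation (or a summand of it) is a boundary representation, so one needs a matrix convex set all of whose defining data gives rise to a $C^\ast$-algebra / operator system with \emph{no} boundary representations on finite-dimensional Hilbert space, i.e., whose $C^\ast$-envelope is represented only in infinite dimensions. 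The round free ball is the prototype of such a set, and I would either cite the structure of its (infinite-dimensional) boundary representations or argue directly that any finite-dimensional tuple in it has a nontrivial $C^\ast$-extreme-type dilation. Once that is in hand, the $g \geq 1$ non-self-adjoint case and the $g=1$ sub-case follow by the same mechanism applied to a matrix range with circular level-$1$ set, and the self-adjoint $g \geq 2$ restriction is explained by noting that in one self-adjoint variable $K(1)$ is an interval, whose endpoints \emph{are} free extreme, so $g \geq 2$ is genuinely needed there.
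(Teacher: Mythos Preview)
Your overall framing---find a compact matrix convex set whose associated operator system has no finite-dimensional boundary representations---is correct, and matches the paper's strategy. But your choice of candidate set in the self-adjoint case is fatal. You propose taking, for $g \geq 2$ in $SM(\FF^g)$, ``a free spectrahedron defined by a linear pencil with no `flat' directions in its boundary,'' with the Euclidean ball as first level, arguing that roundness forces every finite-level point to dilate nontrivially. This is exactly backwards: by Theorem~\ref{thm:dropspan} (and already by Corollary~\ref{cor:SpecLevel1Extreme} at level one), \emph{every} bounded real free spectrahedron is the matrix convex hull of its free extreme points, and in particular the self-adjoint matrix ball $\mathcal{B}_g = \{X : \sum X_j^2 \preceq I\}$ is spanned by its free extreme points. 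Roundness of $K(1)$ has nothing to do with absence of free extreme points; what matters is whether the $C^\ast$-envelope admits finite-dimensional representations. For the self-adjoint real case the paper therefore cannot use a free spectrahedron at all and instead cites the construction of \cite{Eve18}, which builds a compact matrix convex set that is \emph{not} a free spectrahedron.

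In the non-self-adjoint (or complex) case your intuition is closer, but still missing the mechanism. The paper uses the row contractions $\{T : \sum T_j T_j^\ast \preceq I\}$ for $d \geq 2$, whose level-one set is indeed a ball, but the reason free extreme points fail to exist is not geometric roundness: it is that the $C^\ast$-envelope is the Cuntz algebra $\mathcal{O}_d$, which is simple and infinite-dimensional, hence has no finite-dimensional representations and therefore no finite-dimensional boundary representations (Example~\ref{example:Cuntz}). Note also that your suggested $g=1$ example---a single operator with disk matrix range---does not work either: the set of matrix contractions has unitary matrices as free extreme points (every contraction dilates to a unitary via Halmos). So you would need to identify the Cuntz/row-contraction example explicitly and argue via simplicity of $\mathcal{O}_d$, and for the real self-adjoint case abandon free spectrahedra entirely.
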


As it turns out, spanning results for free extreme points over a field are sensitive both to that field and to whether or not one works with self-adjoint variables; this must be taken into account when giving examples for the above theorem. See the upcoming Example \ref{example:Cuntz} for the use of the Cuntz isometries and row contractions, in the setting of complex matrix convex sets. However, this example no longer applies if one works with self-adjoint variables over the real coefficient field. In this case, \cite{Eve18} gives a construction of compact matrix convex sets that do not have free extreme points.  

While not every matrix convex set is spanned by its free extreme points, certain important families of matrix convex sets do admit a spanning result. Among these are matrix convex sets defined by real noncommutative polynomial inequalities, i.e., real free spectahedra, which are the matrix convex hull of their free extreme points. Before proving this result, we describe some tools from the operator system perspective on matrix convex sets.

\subsection{The operator system perspective}\label{subsec:opersys}

Matrix convex sets are dual to \textit{operator systems}. For brevity, we include the concrete presentation of an operator system only. See \cite[Chapter 13]{Pa} for the abstract point of view and its connection to the following definition.

\begin{definition}
An \textbf{operator system} is a vector subspace $\mathcal{S}$ of a unital $C^*$-algebra $A$ such that $1 \in \mathcal{S}$ and $\mathcal{S}$ is closed under the adjoint operation.
\end{definition}

The concept of an operator system is modeled after a key example in the commutative case. If $K$ is compact and convex inside a locally convex topological vector space, then the functions that respect the structure of $K$ are the continuous $\mathbb{R}$-affine functions $f \in A(K)$. The affine functions are closed under the adjoint, namely complex conjugation, as well as under vector space operations. However, unless $K$ consists of a single point, $A(K)$ is not closed under multiplication and therefore forms a proper subspace of $C(K).$ Thus, while integration against functions in $C(K)$ determines a regular Borel measure uniquely,
\[  \forall f \in C(K), \int f \, d\mu \,\, = \,\, \int f \, d\nu  \,\,\,\,  \hspace{.3 cm} \implies \hspace{.4 cm} \mu = \nu,\]
the same cannot be said for integration against only $f \in A(K)$.

\begin{definition} Let $z \in K$. Then a \textbf{representing measure} for $z$ is a regular Borel measure $\mu$ such that $\int f \,d\mu = f(z)$ for all $f \in A(K).$
\end{definition}

The study of integral representations underpins Choquet theory and the theory of boundaries. While we will not divulge all of the details here, the interested reader is pointed to \cite{Ch69} for more information; we will develop a few simple cases and state theorems as needed.

Consider the case when $z = t x + (1-t)y$ is not an extreme point of $K$. Then both the point mass $\delta_z$ and the nontrivial combination of point masses $\mu = t \delta_x + (1-t)\delta_y$ are representing measures for $z$. This is not a coincidence, as existence of a unique representing measure is equivalent to the claim that $z$ is extreme \cite{Bau}.

 This view may also be expanded into an analysis of extensions of maps. Unital positive linear functionals on $C(K)$ are exactly determined by integration against regular Borel probability measures, so a representing measure for $z \in K$ gives an example of a positive linear functional on $C(K)$ that, when restricted to $A(K)$, gives evaluation at $z$. Said differently, a representing measure for $z$ leads to a positive linear extension of the evaluation map $f \in A(K) \mapsto f(z)$ to the domain $C(K)$. Combining the above, we reach the following.

\begin{theorem}
Let $K$ be a compact convex subset of a locally convex topological vector space and let $z \in K$. Then $z$ is extreme precisely when the evaluation map $f \mapsto f(z)$ on $A(K)$ has a unique positive linear extension (which must be the evaluation map $f \mapsto f(z)$ on $C(K)$).
\end{theorem}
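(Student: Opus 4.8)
The plan is to reduce the statement, via the Riesz representation theorem, to the classical equivalence between extremality of $z$ and uniqueness of a representing measure for $z$; this equivalence is due to Bauer \cite{Bau} (see also \cite{Ch69}), so the real work is in setting up the dictionary correctly. First I would observe that any positive linear extension of the evaluation map $f \mapsto f(z)$ from $A(K)$ to $C(K)$ is automatically \emph{unital}, because $1 \in A(K)$ and it evaluates to $1$ at $z$; hence such an extension is a unital positive functional on $C(K)$ and so is given by integration against a unique regular Borel probability measure $\mu$. The extension property is precisely the condition $\int f\,d\mu = f(z)$ for all $f \in A(K)$, i.e.\ $\mu$ is a representing measure for $z$. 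Conversely, every representing measure produces a positive linear extension $f \mapsto \int f\,d\mu$ on $C(K)$, and distinct measures produce distinct extensions since $C(K)$ separates regular Borel measures. As $\delta_z$ is always a representing measure (affine functions are in particular continuous), the evaluation map on $C(K)$ is always one such extension, so uniqueness of the extension is exactly the statement that $\delta_z$ is the only representing measure for $z$ — and in that case the unique extension must be $f \mapsto f(z)$ on $C(K)$, as claimed.

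It then remains to prove that $z$ is extreme if and only if $\delta_z$ is its unique representing measure. For the easy direction I would argue the contrapositive: if $z = tx + (1-t)y$ with $t \in (0,1)$ and $x \neq y$, then $t\delta_x + (1-t)\delta_y$ is a representing measure different from $\delta_z$, since at least one of $x,y$ is $\neq z$ and carries mass at least $\min(t,1-t)>0$ while $\delta_z$ gives it mass $0$; so the extension is not unique. For the converse, assume $z$ is extreme, let $\mu$ be a representing measure, and suppose $\mu \neq \delta_z$. Then $\mu(K \setminus \{z\}) > 0$ (note $\{z\}$ is closed, hence Borel), so by inner regularity there is a compact $F \subseteq K \setminus \{z\}$ with $s := \mu(F) \in (0,1]$. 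Splitting $\mu$ across $F$ and its complement and passing to barycenters of the normalized pieces, I would write $z = s w_1 + (1-s) w_2$ with $w_1 \in \overline{\conv}(F) \subseteq K$ and $w_2 \in K$, using that a probability measure on a compact convex subset of a locally convex space has a barycenter in that set. Extremality of $z$ then forces $z = w_1 \in \overline{\conv}(F)$ (this is immediate when $s = 1$), so $z$ is an extreme point of the compact convex set $\overline{\conv}(F)$, and Milman's converse to the Krein--Milman theorem \cite{Ba} gives $z \in F$, contradicting $F \subseteq K \setminus \{z\}$. Hence $\mu = \delta_z$.

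I expect this converse direction to be the main obstacle: one must ensure that barycenters of probability measures exist and remain inside the relevant compact convex set in the general (not necessarily metrizable) locally convex setting, and one must handle the measure-theoretic decomposition through inner regularity of $\mu$. All of this is standard Choquet-theoretic input, and a reader who prefers not to reprove it can simply cite the equivalence from \cite{Bau}.
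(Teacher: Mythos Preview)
Your proposal is correct and follows exactly the route the paper sketches in the paragraphs preceding the theorem: set up the Riesz correspondence between positive linear extensions and representing measures, note that $\delta_z$ is always one such, and then invoke Bauer's equivalence \cite{Bau} between extremality of $z$ and uniqueness of the representing measure. The paper does not spell out the harder direction beyond citing \cite{Bau}, so your barycenter--plus--Milman argument simply fills in what the paper leaves to the reference; your caveats about existence of barycenters in the general locally convex setting are well placed and are indeed standard Choquet-theoretic facts.
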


Note that evaluation $f \mapsto f(z)$ on $C(K)$ is not just a positive linear map; it is a representation, which is also trivially irreducible, as it is one-dimensional. Both of these ideas extend to the noncommutative setting. For operator systems, the appropriate morphism is a UCP map.

\begin{definition}
Let $\phi: \mathcal{S}_1 \to \mathcal{S}_2$ be a linear unital map between operator systems. Then $\phi$ is called \textbf{unital completely positive}, or \textbf{UCP}, if for every positive matrix $[s_{ij}]$ over $\mathcal{S}_1$, it follows that $[\phi(s_{ij})]$ is a positive matrix over $\mathcal{S}_2$.
\end{definition}

Roughly, when an operator system sits inside a noncommutative $C^*$-algebra, it is typically necessary to examine matrices over $\mathcal{S}$, as opposed to elements of $\mathcal{S}$, in order to see the full structure of $\mathcal{S}$. There is also a convention to use the word \lq\lq completely\rq\rq\hspace{0pt} to describe any property that also applies to matrices over $\cS$ of all sizes. For example, an isometric map preserves norms of elements of $\mathcal{S}$, whereas a completely isometric map preserves norms of all matrices (of any size) over $\mathcal{S}$.

It is particularly common to consider \lq\lq concrete\rq\rq\hspace{0pt} UCP maps $\phi: \mathcal{S} \to B(H)$. In this case, Arveson's Extension Theorem \cite[Theorem 1.2.3]{A1} states that $\phi$ extends to a UCP map on $C^*(\mathcal{S}),$ the $C^*$-algebra generated by $\cS$. Note that for one-dimensional Hilbert spaces, $B(H) \cong \mathbb{C}$, and every unital positive linear functional is automatically UCP by \cite[Proposition 1.2.2]{A1}. Arveson also developed the following notion of extreme point for operator systems, by analogy with the commutative case $A(K) \hookrightarrow C(K)$.

\begin{definition} \cite[Definition 2.1.1]{A1}
Let $\mathcal{S}$ be an operator system, sitting inside $A = C^*(\mathcal{S})$, and let $\pi: A \to B(H)$ be an irreducible representation. Then $\pi$ is called a \textbf{boundary representation} if $\pi$ is the unique UCP extension of $\pi|_\mathcal{S}$ to $A$.
\end{definition}

The boundary representations of the operator system $A(K)$ (which generates $C(K)$) are exactly the evaluation maps at extreme points. However, the noncommutative point of view also provides another useful perspective: dilation theory. 

\begin{definition}
Let $\phi: \mathcal{S} \to B(H)$ be a UCP map. Then a \textbf{UCP dilation} of $\phi$ is another UCP map $\psi: \mathcal{S} \to B(\widetilde{H})$ such that there exists an isometry $V: H \to \widetilde{H}$ with $\phi(s) = V^*\psi(s) V$ for all $s \in \cS$. That is,
\[ \psi \, \sim_u \, \begin{bmatrix} \phi & * \\ * & * \end{bmatrix}.\]
In particular, $\psi$ is called a \textbf{trivial UCP dilation} if it is unitarily equivalent to the direct sum of $\phi$ with another UCP map, and $\phi$ is called \textbf{maximal} if the only UCP dilations of $\phi$ are trivial.
\end{definition}

Maximal UCP maps turn out to have unique extensions, showing some utility of the noncommutative point of view.

\begin{theorem}\label{thm:maxext} \cite[Proposition 2.4]{Arv08}
Let $\phi: \mathcal{S} \to B(H)$ be a UCP map. Then $\phi$ is maximal if and only if both of the following hold: there is a unique UCP extension of $\phi$ to $C^*(\mathcal{S})$, and that extension is a representation of $C^*(\mathcal{S})$.
\end{theorem}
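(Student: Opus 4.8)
The final statement to prove is Theorem~\ref{thm:maxext}: a UCP map $\phi: \mathcal{S} \to B(H)$ is maximal if and only if it has a unique UCP extension to $C^*(\mathcal{S})$ and that extension is a representation.

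\medskip

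\textbf{Proof plan.} I would prove the two implications separately, with the reverse direction being the quick one and the forward direction carrying the real content.

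For the direction ($\Leftarrow$), suppose $\phi$ has a unique UCP extension $\pi$ to $A := C^*(\mathcal{S})$ and $\pi$ is a $*$-representation. Let $\psi: \mathcal{S} \to B(\widetilde H)$ be any UCP dilation, so there is an isometry $V: H \to \widetilde H$ with $\phi = V^*\psi(\cdot)V$. First extend $\psi$ to a UCP map $\widetilde\psi$ on $A$ by Arveson's Extension Theorem. Then $V^*\widetilde\psi(\cdot)V$ is a UCP extension of $\phi$ to $A$, so by uniqueness it equals $\pi$. Since $\pi$ is a representation, for each $a \in A$ we have $V^*\widetilde\psi(a^*a)V = \pi(a^*a) = \pi(a)^*\pi(a) = (V^*\widetilde\psi(a)V)^*(V^*\widetilde\psi(a)V)$; combining this with the Schwarz inequality $\widetilde\psi(a^*a) \succeq \widetilde\psi(a)^*\widetilde\psi(a)$ and the fact that $V^*\widetilde\psi(a)^*(I - VV^*)\widetilde\psi(a)V \succeq 0$ forces $(I - VV^*)\widetilde\psi(a)V = 0$, i.e., $\widetilde\psi(a)V = VV^*\widetilde\psi(a)V = V\pi(a)$. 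Thus the range of $V$ is invariant for $\widetilde\psi$, and a symmetric argument using $aa^*$ (or taking adjoints) shows it is reducing, so $\widetilde\psi \sim_u \pi \oplus (\text{something})$, making the dilation $\psi$ trivial. Hence $\phi$ is maximal.

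For the direction ($\Rightarrow$), suppose $\phi$ is maximal. I would first build a candidate extension: apply Arveson's Extension Theorem to get \emph{some} UCP extension $\widetilde\phi: A \to B(H)$ of $\phi$, then take its minimal Stinespring dilation $\widetilde\phi(a) = W^*\rho(a)W$ with $\rho: A \to B(\widehat H)$ a representation and $W: H \to \widehat H$ an isometry with $\rho(A)WH$ dense. Restricting, $\rho|_\mathcal{S}$ is a UCP dilation of $\phi$, so by maximality it is trivial: $\rho|_\mathcal{S} \sim_u \phi \oplus \sigma$ for some UCP $\sigma$, which (by minimality of the Stinespring dilation, $\widehat H$ has no part orthogonal to everything) forces $W$ to be onto, so $\widetilde\phi = \rho$ is itself a representation. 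This shows maximal $\phi$ extends to a representation of $A$. Uniqueness then follows: if $\rho_1, \rho_2$ are two UCP extensions of $\phi$ to $A$, form $\rho_1 \oplus \rho_2$; compressing to the first coordinate recovers $\phi$, so $\rho_1 \oplus \rho_2$ is a trivial dilation of $\phi$ (hence of $\rho_1$, viewing $\rho_1$ now as a UCP map on $\mathcal{S}$), and a standard argument shows the compression of $\rho_1 \oplus \rho_2$ that is $\rho_2$ must agree with $\rho_1$; more cleanly, any maximal UCP map on $\mathcal{S}$ whose extension is a representation has that representation uniquely determined because a representation is itself maximal, and maximality of $\phi$ on $\mathcal{S}$ propagates. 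I would streamline the uniqueness by noting: any UCP extension $\rho$ of $\phi$ dilates $\phi$ trivially iff $H$ reduces $\rho$; maximality gives that every dilation is trivial, and applied to $\rho_1 \oplus \rho_2$ this pins down $\rho_2|_H$-compression to equal $\rho_1$, forcing $\rho_1 = \rho_2$ on $A$ by irreducibility/density arguments.

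\textbf{Main obstacle.} The delicate point is the reverse direction's passage from ``$V^*\widetilde\psi(\cdot)V$ is a representation'' to ``$\operatorname{ran} V$ reduces $\widetilde\psi$'' — this is exactly the multiplicative-domain / Schwarz-inequality rigidity argument, and one must be careful that it is the \emph{extension} $\widetilde\psi$ on the $C^*$-algebra (where $a^*a$ makes sense) rather than $\psi$ on $\mathcal{S}$ alone where the argument is run. The forward direction's only subtlety is ensuring the Stinespring dilation is minimal so that triviality of the dilation upgrades to $W$ being surjective; this is routine but must be stated. I expect the cleanest writeup to lean on Arveson's Extension Theorem and Stinespring's theorem as black boxes and to isolate the Schwarz-inequality rigidity lemma explicitly.
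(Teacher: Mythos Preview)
The paper does not supply its own proof of this theorem; it is simply quoted from \cite[Proposition~2.4]{Arv08}. Your proposal is essentially Arveson's original argument, and both directions are correct in substance: the reverse implication via Arveson extension plus the Schwarz-inequality/multiplicative-domain rigidity is exactly right, and the forward implication via a minimal Stinespring dilation of an arbitrary UCP extension is the standard route.

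The one place that needs tightening is your uniqueness argument in the forward direction. Forming $\rho_1 \oplus \rho_2$ and invoking maximality is not quite the right move: restricted to $\mathcal{S}$ this map is already the direct sum $\phi \oplus \phi$, so triviality of that dilation gives you nothing new. The clean argument is the one you nearly state but do not quite carry out: for \emph{any} UCP extension $\rho$ of $\phi$ to $A$, take its minimal Stinespring dilation $\rho = W^*\sigma W$; then $\sigma|_{\mathcal{S}}$ dilates $\phi$, hence is trivial by maximality, so $WH$ reduces $\sigma|_{\mathcal{S}}$ and therefore reduces the representation $\sigma$ on all of $A = C^*(\mathcal{S})$; minimality then forces $W$ to be unitary, so $\rho$ is itself a $*$-representation. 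Thus every UCP extension of $\phi$ is a representation agreeing with $\phi$ on the generating set $\mathcal{S}$, and two such $*$-homomorphisms coincide on $A$. Replace your $\rho_1 \oplus \rho_2$ paragraph with this and the proof is complete.
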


Consider, for example, the case of affine functions $A(K)$ embedded in continuous functions $C(K)$, and a point $z = \cfrac{x + y}{2}$ that is not extreme. Then the map

\[ f \in A(K) \,\, \mapsto \,\, \begin{bmatrix} f(x) & 0 \\ 0 & f(y) \end{bmatrix} \in M_2(\C) \]
is UCP, as is its unitary conjugation
\[ f \,\, \mapsto \,\, \begin{bmatrix} 1/\sqrt{2} & 1/\sqrt{2} \\ 1/\sqrt{2} & -1/\sqrt{2} \end{bmatrix} \begin{bmatrix} f(x) & 0 \\ 0 & f(y) \end{bmatrix} \begin{bmatrix} 1/\sqrt{2} & 1/\sqrt{2} \\ 1/\sqrt{2} & -1/\sqrt{2} \end{bmatrix} \]
\[ = \begin{bmatrix} \cfrac{f(x) + f(y)}{2} & \cfrac{f(x) - f(y)}{2} \\ \cfrac{f(x) - f(y)}{2} & \cfrac{f(x) + f(y)}{2} \end{bmatrix} = \begin{bmatrix} f(z) & \cfrac{f(x) - f(y)}{2} \\ \cfrac{f(x) - f(y)}{2} & f(z) \end{bmatrix}. \]
Here, the last equality is due to the fact that affine functions respect convex combinations, which yields $\frac{f(x) + f(y)}{2} = f\left( \frac{x + y}{2} \right) = f(z)$. So, there is a nontrivial UCP dilation of the evaluation map $f \mapsto f(z)$ on $A(K)$. This is consistent with the above theorem; evaluation at $z$ should not be maximal, as $z$ is not extreme, meaning evaluation $f \in A(K) \mapsto f(z)$ does not have a unique UCP extension.

When Arveson developed boundary representations as a notion of extreme point for operator systems, their sufficiency in the sense of a Krein-Milman theorem was not known in general, until the following result of Davidson and Kennedy.

\begin{theorem}\cite[\S 3]{DK15}
If $[s_{ij}]$ is a matrix over an operator system $\mathcal{S}$, then there is a boundary representation $\pi$ such that $\|[\pi(s_{ij})]\| = \|[s_{ij}]\|$. In particular, every operator system is completely normed by its boundary representations.
\end{theorem}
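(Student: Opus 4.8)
The plan is to reduce the noncommutative statement to a separation argument in the dual picture, replacing "norm" data by matrix convex set data, and then to invoke a noncommutative Krein–Milman / Choquet-type principle that matrix extreme points (or, better, boundary representations) detect the full structure. Concretely, given a matrix $[s_{ij}]$ over $\mathcal{S}$ of size $k$, set $c = \|[s_{ij}]\|$. By the definition of the operator norm on $M_k(\mathcal{S})$ there is a Hilbert space $H_0$ and a UCP map $\phi_0 : \mathcal{S} \to B(H_0)$ (for instance coming from a state on $C^*(\mathcal{S})$ composed with a GNS-type construction, or simply a faithful concrete representation of $C^*(\mathcal{S})$ restricted to $\mathcal{S}$) together with unit vectors witnessing $\|[\phi_0(s_{ij})]\| $ arbitrarily close to $c$; in fact taking $\phi_0$ to be a faithful representation of $C^*(\mathcal{S})$ gives $\|[\phi_0(s_{ij})]\| = c$ exactly. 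The issue is that $\phi_0$ need not be maximal, hence need not dilate to a boundary representation directly; one must push it up to a maximal UCP map and then decompose.

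The key steps, in order, are as follows. First, by a Zorn's lemma / dilation argument (Dritschel–McCullough, or Arveson's \cite{Arv08} framework, already alluded to in the excerpt around Theorem~\ref{thm:maxext}), every UCP map $\phi: \mathcal{S} \to B(H)$ admits a maximal UCP dilation $\widetilde{\phi}: \mathcal{S} \to B(\widetilde{H})$; since dilation only enlarges the compression structure, $\|[\widetilde{\phi}(s_{ij})]\| \geq \|[\phi(s_{ij})]\|$, while the reverse inequality is automatic since $\widetilde{\phi}$ is a UCP map into $B(\widetilde H)$ and hence contractive on matrix norms after we know $\widetilde\phi$ is completely contractive — here one should instead argue $\|[\widetilde\phi(s_{ij})]\| = \|[\phi(s_{ij})]\|$ by noting $\phi$ is a compression of $\widetilde\phi$, so dilating cannot decrease the norm, and $\widetilde\phi$ extends to a representation of $C^*(\mathcal{S})$ by Theorem~\ref{thm:maxext}, which is completely isometric onto its image inside that $C^*$-algebra, giving $\|[\widetilde\phi(s_{ij})]\| \leq \|[s_{ij}]\| = c$. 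Combining with $\|[\phi_0(s_{ij})]\| = c$ we get $\|[\widetilde{\phi_0}(s_{ij})]\| = c$, so the maximal map already attains the norm. Second, use Theorem~\ref{thm:maxext} to identify $\widetilde{\phi_0}$ with (the restriction of) a representation $\pi$ of $C^*(\mathcal{S})$. Third, decompose $\pi$ into irreducibles: write $\pi$ as a direct integral (or, since only a $k$-dimensional corner is relevant, reduce to a separable piece and then to a direct sum) of irreducible representations $\pi_\lambda$; each $\pi_\lambda|_{\mathcal{S}}$ is maximal, because a corner of a maximal UCP map is maximal (a dilation of the corner would produce a nontrivial dilation of $\pi$), and a maximal irreducible UCP map is exactly a boundary representation by Theorem~\ref{thm:maxext} again. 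Fourth, observe $\|[\pi(s_{ij})]\| = \sup_\lambda \|[\pi_\lambda(s_{ij})]\|$, so some boundary representation $\pi_\lambda$ satisfies $\|[\pi_\lambda(s_{ij})]\| \geq c - \epsilon$ for every $\epsilon$; a compactness argument on the (finite-dimensional, for fixed test vectors) data, or a direct integral essential-supremum argument, upgrades this to an actual boundary representation attaining $c$.

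The main obstacle is precisely the last upgrade, i.e. passing from "boundary representations approximate the norm" to "some boundary representation attains it." The supremum over $\lambda$ in a direct integral decomposition is an essential supremum, and it need not be attained by a single fiber; one must argue that the set of $\lambda$ with $\|[\pi_\lambda(s_{ij})]\| \geq c - \epsilon$ has positive measure for all $\epsilon$, and then either extract a convergent net of unit vectors and irreducible pieces (using that the relevant spectral data lives in a compact set once we fix the size $k$ and normalize), or invoke the boundary theorem machinery of \cite{DK15} directly, which handles this by working with the Shilov ideal and a maximality-preserving decomposition that does produce an honest boundary representation. In the write-up I would state the approximate version cleanly from the dilation lemma plus Theorem~\ref{thm:maxext}, then cite \cite{DK15} for the refinement that the supremum is attained, since that refinement is the genuinely hard input and is the content of the theorem being quoted; alternatively, for the purposes of this expository article, one may simply present the approximate statement ("boundary representations completely norm $\mathcal{S}$" in the sense that $\|[s_{ij}]\| = \sup \|[\pi(s_{ij})]\|$ over boundary representations $\pi$) as the takeaway, which is all that is needed for the ensuing application to free spectrahedra.
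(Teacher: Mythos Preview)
The paper does not supply its own proof of this theorem; it is stated as a citation of \cite{DK15} and left unproved in this survey. So the relevant comparison is to the actual Davidson--Kennedy argument.

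Your outline has the right coarse structure (dilate to maximal, identify with a representation via Theorem~\ref{thm:maxext}, pass to irreducible pieces), and you correctly isolate the genuine obstacle: upgrading $\|[\pi(s_{ij})]\| = \operatorname{ess\,sup}_\lambda \|[\pi_\lambda(s_{ij})]\|$ to attainment by a single fiber. But your proposed workarounds do not close this gap. Citing \cite{DK15} for the attainment is circular, since that \emph{is} the theorem. Settling for the supremum gives a strictly weaker conclusion than what is stated. There is no compactness on the space of (unitary equivalence classes of) irreducible representations that would let you extract a limit boundary representation from the fibers $\pi_\lambda$, so the ``compactness argument on finite-dimensional data'' you gesture at does not go through in general. (A minor side remark: the phrase ``a corner of a maximal UCP map is maximal'' is false as written; you mean a \emph{direct summand}, which your parenthetical makes clear, but the word ``corner'' should be avoided here.)

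Davidson--Kennedy avoid the decomposition-then-attainment problem altogether by building irreducibility in from the start. The norm $\|[s_{ij}]\|$ is witnessed by a state on $M_k(\mathcal{S})$, and by Krein--Milman one may take this state to be \emph{pure}; equivalently, one obtains a pure UCP map $\phi:\mathcal{S}\to B(H_0)$ (with $H_0$ finite-dimensional) and a unit vector achieving the norm. The central technical contribution of \cite{DK15} is that a pure UCP map admits a \emph{maximal dilation that is still pure}: purity is preserved along a transfinite dilation chain. A maximal pure UCP map is then automatically irreducible, and by Theorem~\ref{thm:maxext} it is the restriction of a boundary representation. Since dilation can only increase the norm of the image of $[s_{ij}]$ while complete contractivity bounds it above by $\|[s_{ij}]\|$, the resulting boundary representation attains the norm exactly. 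The missing idea in your sketch is precisely this ``dilate while preserving purity'' step, which replaces the direct-integral decomposition and eliminates the attainment issue.
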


The interested reader should also see \cite{Arv08, DM05} for special cases, and \cite{F, kls14} for other key results about pure matrix states. The sufficiency of boundary representations provided a simple proof that there is a \lq\lq smallest\rq\rq\hspace{0pt} $C^*$-algebra, called the $C^*$-envelope, in which a given operator system $\mathcal{S}$ lives. The $C^*$-envelope can be found by taking the direct sum of boundary representations, but existence of the $C^*$-envelope had been shown by Hamana earlier in \cite{Ham79} using different techniques.

\begin{corollary}\label{cor:env}
If $\mathcal{S}$ is an operator system, then there is a $C^*$-algebra $C^*_e(\mathcal{S})$, called the \textbf{C*-envelope} of $\mathcal{S}$, which contains $\mathcal{S}$, is generated by $\mathcal{S}$, and has the following universal property. For any completely isometric embedding $\iota: \mathcal{S} \to A$ of $\mathcal{S}$ into a $C^*$-algebra $A$, there exists a unital $*$-homomorphism $\pi: C^*(\iota(\mathcal{S})) \to C^*_e(\mathcal{S})$ with $\pi \circ \iota$ acting as the identity on $\mathcal{S}$.
\end{corollary}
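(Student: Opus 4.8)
The plan is to build $C^*_e(\mathcal{S})$ concretely as a direct sum of boundary representations and then read off the universal property from the uniqueness packaged in Theorem \ref{thm:maxext}. First, fix a family $\{\pi_\alpha : C^*(\mathcal{S}) \to B(H_\alpha)\}_\alpha$ consisting of one representative from each unitary equivalence class of boundary representation of $C^*(\mathcal{S})$; this is a set, since each $\pi_\alpha$ is irreducible, hence cyclic, so the $H_\alpha$ may be taken of bounded dimension. Put $\sigma = \bigoplus_\alpha \pi_\alpha$ and define $C^*_e(\mathcal{S}) := \sigma(C^*(\mathcal{S})) = C^*\big(\sigma(\mathcal{S})\big)$. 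For any matrix $[s_{ij}]$ over $\mathcal{S}$ one has $\|[\pi_\alpha(s_{ij})]\| \le \|[s_{ij}]\|$ for every $\alpha$, with equality for at least one $\alpha$ by the Davidson--Kennedy theorem; since $\sigma$ is block diagonal this gives $\|[\sigma(s_{ij})]\| = \sup_\alpha \|[\pi_\alpha(s_{ij})]\| = \|[s_{ij}]\|$, so $\sigma|_{\mathcal{S}}$ is a unital complete isometry. Identifying $\mathcal{S}$ with $\sigma(\mathcal{S})$, the algebra $C^*_e(\mathcal{S})$ then contains $\mathcal{S}$ and is generated by it.

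Next, for the universal property, let $\iota : \mathcal{S} \to A$ be a completely isometric unital embedding and write $\mathcal{A} = C^*(\iota(\mathcal{S}))$. Since $\iota$ is unital and completely isometric, it is a unital complete order isomorphism of $\mathcal{S}$ onto the operator system $\iota(\mathcal{S})$ (a standard fact, see \cite[Chapter 2]{Pa}); in particular $\iota^{-1} : \iota(\mathcal{S}) \to \mathcal{S}$ is UCP. Fix a boundary representation $\pi_\alpha$ of $C^*(\mathcal{S})$ and consider the UCP map $\phi_\alpha := \pi_\alpha \circ \iota^{-1} : \iota(\mathcal{S}) \to B(H_\alpha)$. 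The notion of a UCP dilation of a UCP map on an operator system refers only to that operator system and not to any ambient $C^*$-algebra, so maximality is an intrinsic property of a pair (operator system, UCP map), and it transports across the unital complete order isomorphism $\iota$. Since $\pi_\alpha$ is a boundary representation, $\pi_\alpha|_{\mathcal{S}}$ has $\pi_\alpha$ itself as its unique UCP extension to $C^*(\mathcal{S})$, a representation, so $\pi_\alpha|_{\mathcal{S}}$ is maximal by Theorem \ref{thm:maxext}; hence $\phi_\alpha$ is maximal on $\iota(\mathcal{S})$. As $\iota(\mathcal{S})$ generates $\mathcal{A}$, Theorem \ref{thm:maxext} then yields a unique UCP extension of $\phi_\alpha$ to $\mathcal{A}$, and this extension is a $*$-representation $\widetilde{\pi}_\alpha : \mathcal{A} \to B(H_\alpha)$ with $\widetilde{\pi}_\alpha(\iota(s)) = \pi_\alpha(s)$ for all $s \in \mathcal{S}$.

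It then remains to assemble these. Set $\pi := \bigoplus_\alpha \widetilde{\pi}_\alpha : \mathcal{A} \to B\big(\bigoplus_\alpha H_\alpha\big)$, a unital $*$-homomorphism. For $s \in \mathcal{S}$ we get $\pi(\iota(s)) = \bigoplus_\alpha \widetilde{\pi}_\alpha(\iota(s)) = \bigoplus_\alpha \pi_\alpha(s) = \sigma(s)$, so $\pi$ carries $\mathcal{A} = C^*(\iota(\mathcal{S}))$ onto $C^*(\sigma(\mathcal{S})) = C^*_e(\mathcal{S})$ and $\pi \circ \iota$ is the identity on $\mathcal{S}$ under the identification fixed above. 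Uniqueness of $C^*_e(\mathcal{S})$ up to a $*$-isomorphism fixing $\mathcal{S}$ is then a formal consequence of the universal property.

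The hard part is the middle paragraph, and it is the only step needing content beyond bookkeeping and the cited theorems: one must recognize that maximality of a UCP map is intrinsic to its operator system and hence is preserved by the complete order isomorphism $\iota$, which is exactly what lets Theorem \ref{thm:maxext} upgrade ``$\phi_\alpha$ is maximal'' to ``$\phi_\alpha$ has a unique, representation-valued UCP extension to $\mathcal{A}$.'' Two minor points also deserve care: that a unital complete isometry between operator systems is automatically a complete order isomorphism, and that the collection of boundary representations, taken up to unitary equivalence, genuinely forms a set.
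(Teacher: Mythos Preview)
Your proposal is correct and follows exactly the approach the paper sketches but does not detail: construct $C^*_e(\mathcal{S})$ as the image of the direct sum of boundary representations, invoke the Davidson--Kennedy theorem for the complete isometry on $\mathcal{S}$, and use Theorem \ref{thm:maxext} (transported across the complete order isomorphism $\iota$) to obtain the universal $*$-homomorphism. The paper does not give a proof of this corollary, merely remarking that the $C^*$-envelope ``can be found by taking the direct sum of boundary representations,'' so your writeup supplies precisely the omitted details.
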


See \cite{Kak09} for more information about the $C^*$-envelope. In general, two copies of the same operator system $\mathcal{S}$ may generate $C^*$-algebras that look very different. However, if a copy of $\mathcal{S}$ sits inside a specific $C^*$-algebra, then there is a quotient of $C^*(\mathcal{S})$ that is isomorphic to the $C^*$-envelope. So, in this sense, the $C^*$-envelope of $\mathcal{S}$ is the \lq\lq smallest\rq\rq\hspace{0pt} $C^*$-algebra that contains $\mathcal{S}$. 

Our examples will primarily deal with operator systems spanned by a finite list of operators. For $T = (T_1, \ldots, T_d) \in B(H)^d$, we denote by $\mathcal{S}_T$ the smallest operator system containing every $T_i$. The operator system $\mathcal{S}_T$ is dual to the matrix convex set $\mathcal{W}(T)$, called the \textbf{matrix range} of $T$, defined as
\[ \mathcal{W}(T) = \bigcup\limits_{n=1}^\infty \mathcal{W}_n(T), \hspace{.5 cm} \mathcal{W}_n(T) = \{\phi(T) \,\, | \,\, \phi: \mathcal{S}_T \to M_n(\C) \text{ is UCP}\}.\]
In particular, knowledge of $\mathcal{W}(T)$ implies, through the use of limiting arguments, knowledge of every possible UCP image of $T$, and hence completely describes the operator system structure of $\mathcal{S}_T$.

\begin{example}
\label{example:Simplex}
Let $\Delta$ be a $2$-simplex given by the convex hull in $\mathbb{R}^2$ of $(0, 0)$, $(1, 0)$, and $(0, 1)$. Then by \cite[Theorem 4.1]{Pas}, there is a unique matrix convex set whose first level is $\Delta$. (Note that the portion of that theorem we actually use here is seen earlier in \cite[Theorem 4.7]{FNT}, just not in the language of matrix convex sets.) So, for any pair of operators $T = (T_1, T_2)$ such that $T_1$ and $T_2$ are positive, $T_1 + T_2 \preceq I$, and all three extreme points of $\Delta$ are in $\mathcal{W}_1(T)$, it follows that $\mathcal{S}_T$ will have the same operator system structure. However, the $C^*$-algebra generated by each concrete copy may look completely different.
\begin{itemize}
    \item The natural choice is to let $T$ be the direct sum of the extreme points of $\Delta$, so $C^*(\mathcal{S}_T)$ is commutative and in fact generates the $C^*$-envelope.
    \item Another choice is to let $T = (1, 0) \oplus (0, 1) \oplus (K_1, K_2)$ where $(K_1, K_2)$ is an irreducible pair of self-adjoint positive compact operators with $K_1 + K_2 \leq I$. In this case, $C^*(\mathcal{S}_T)$ is a unital $C^*$-algebra generated by compact operators, and it has an \lq\lq extra\rq\rq\hspace{0pt} representation that maps $(T_1, T_2) \mapsto (K_1, K_2)$. This example was used in \cite[Example 3.14]{Pas19} to show that minimal tuples for a certain matrix range need not be unique, even if the tuples consist of compact operators (see also the arXiv update to \cite[\S 6]{DDOSS17}).
    \item  Outside of the compact operators, the situation is even worse; \cite[Example 3.14]{PS19} describes how to construct an \textit{irreducible} tuple $T$, which must not consist of compact operators, whose matrix range is the unique matrix convex set over the simplex $\Delta$. This is in the same vein as Arveson's examples over the unit disk, as in \cite[p. 107]{A1}.
\end{itemize} 
This example also shows that there are subtleties when discussing the smallest \textit{spatial} presentation of an operator system \cite{DDOSS17, DP22, Pas19}; see \cite[Theorem 2.11 and Theorem 3.9]{DP22} for summary results. These issues do not appear in the setting of free spectrahedra, as in \cite[Theorem 1.2]{HKM13} and \cite[Theorem 1.2]{Zal17}.
\end{example}

 When $A$ is a tuple of matrices, as opposed to a tuple of operators, $\mathcal{W}(A)$ agrees with the matrix convex hull (with or without a closure) of $A$, which is a consequence of Choi's Theorem \cite{Choi} and compactness arguments.

\begin{proposition}
If $A \in M_n(\C)^d$, then $\mathcal{W}(A) = \operatorname{mconv}(A) = \overline{\operatorname{mconv}}(A)$.
\end{proposition}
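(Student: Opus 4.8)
The plan is first to prove $\operatorname{mconv}(A) = \mathcal{W}(A)$ using Arveson's Extension Theorem and Choi's Theorem, and then to show that $\mathcal{W}(A)$ is levelwise closed by a compactness argument, which immediately gives $\overline{\operatorname{mconv}}(A) = \operatorname{mconv}(A)$ as well.

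For $\operatorname{mconv}(A) \subseteq \mathcal{W}(A)$, I would observe that $\mathcal{W}(A)$ contains $A$ (the inclusion $\mathcal{S}_A \hookrightarrow M_n(\C)$ is UCP) and is matrix convex: if $\phi_i : \mathcal{S}_A \to M_{m_i}(\C)$ are UCP and $\gamma_i \in M_{m_i,m}(\C)$ satisfy $\sum_i \gamma_i^\ast \gamma_i = I_m$, then $s \mapsto \sum_i \gamma_i^\ast \phi_i(s)\gamma_i$ is again UCP, so $\sum_i \gamma_i^\ast \phi_i(A)\gamma_i \in \mathcal{W}_m(A)$. Since $\operatorname{mconv}(A)$ is the smallest matrix convex set containing $A$, the inclusion follows. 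For $\mathcal{W}(A) \subseteq \operatorname{mconv}(A)$, let $\phi : \mathcal{S}_A \to M_m(\C)$ be UCP; by Arveson's Extension Theorem it extends to a UCP map $\widetilde\phi : M_n(\C) \to M_m(\C)$, and by Choi's Theorem $\widetilde\phi(X) = \sum_{i=1}^{N} V_i^\ast X V_i$ for some $V_i \in M_{n,m}(\C)$ with $\sum_{i=1}^{N} V_i^\ast V_i = I_m$ and $N \le nm$. Hence $\phi(A) = \sum_{i=1}^{N} V_i^\ast A V_i \in \operatorname{mconv}(A)$, so $\mathcal{W}_m(A) \subseteq \operatorname{mconv}(A)$ for every $m$, and therefore $\mathcal{W}(A) = \operatorname{mconv}(A)$.

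It then remains to check that each $\mathcal{W}_m(A)$ is closed. Padding the Choi/Kraus decomposition above with zero operators, every UCP map $\widetilde\phi : M_n(\C) \to M_m(\C)$ can be written as $\widetilde\phi(X) = V^\ast (I_{nm} \otimes X)\, V$ for an isometry $V \in M_{n^2m,\,m}(\C)$ (that is, $V^\ast V = I_m$). Therefore
\[
\mathcal{W}_m(A) = \{\, V^\ast (I_{nm} \otimes A)\, V \ :\ V \in M_{n^2 m,\, m}(\C),\ V^\ast V = I_m \,\},
\]
where $I_{nm}\otimes A$ is meant coordinatewise. The set of isometries in $M_{n^2m,\,m}(\C)$ is closed and bounded, hence compact, and $V \mapsto V^\ast (I_{nm}\otimes A)\, V$ is continuous, so $\mathcal{W}_m(A)$ is a compact (in particular closed) subset of $M_m(\C)^d$. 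Thus $\mathcal{W}(A) = \operatorname{mconv}(A)$ is levelwise closed, and $\overline{\operatorname{mconv}}(A) = \operatorname{mconv}(A) = \mathcal{W}(A)$.

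The applications of Arveson's and Choi's theorems are routine; the only point that needs genuine care is the compactness argument, namely observing that the number of Kraus operators (equivalently, the Stinespring dilation dimension) can be bounded in terms of $n$ and $m$ alone, so that the parametrizing set of isometries really is compact.
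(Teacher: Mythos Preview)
Your proposal is correct and follows exactly the approach the paper indicates: the paper does not give a detailed proof but simply states that the result ``is a consequence of Choi's Theorem and compactness arguments,'' and your argument supplies precisely those details (Arveson extension plus Choi's Kraus decomposition for the equality $\mathcal{W}(A)=\operatorname{mconv}(A)$, and the uniform bound on the number of Kraus operators to parametrize $\mathcal{W}_m(A)$ by a compact set of isometries for closedness).
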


When restricted to the setting of irreducible UCP maps, Theorem \ref{thm:maxext} equates boundary representations $\pi$ (or more precisely, the restrictions $\pi|_\mathcal{S}$ of boundary representations) with maximal, irreducible UCP maps on $\mathcal{S}$. Therefore, as seen in \cite[Theorem 1.1 (3)]{EHKM} and \cite[Theorem 4.2]{kls14} with different nomenclature, free extreme points line up with \textit{finite-dimensional} boundary representations.

\begin{corollary}
\label{cor:TrivDilations}
Let $T \in B(H)^d$, and consider the matrix range $\mathcal{W}(T)$. Then the free extreme points of $\mathcal{W}(T)$ are precisely the images $\pi(T)$ of boundary representations $\pi$ of $\mathcal{S}_T$ that map into matrix algebras of arbitrary finite dimension. Equivalently, for any matrix convex set $K \subseteq M(\C)^d$, an element $A \in K$ is a free extreme point of $K$ if and only if $A$ is irreducible and every dilation $D = \begin{bmatrix} A & * \\ * & * \end{bmatrix} \in K$ is a trivial dilation $D = A \oplus B$.
\end{corollary}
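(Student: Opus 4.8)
\emph{Overview of the plan.} I would prove the ``\textbf{Equivalently}'' clause in full generality, and then read off the statement about $\mathcal{W}(T)$ from it. Throughout I use the correspondence between $\mathcal{W}_m(T)$ and UCP maps $\mathcal{S}_T\to M_m(\C)$ — a UCP map is determined by its values on $I,T_1,\dots,T_d,T_1^\ast,\dots,T_d^\ast$, which span the \emph{finite-dimensional} operator system $\mathcal{S}_T$ — under which irreducibility of a tuple matches irreducibility of the corresponding map (as $C^\ast(\phi(\mathcal{S}_T))=C^\ast(\phi(T))$); and I use that $\mathcal{W}(B)=\operatorname{mconv}(B)\subseteq K$ whenever $B\in K$, by matrix convexity of $K$ together with the proposition above identifying $\mathcal{W}(B)$ with $\operatorname{mconv}(B)$ for matrix tuples. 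The forward implication is routine: if $A\in K(n)$ is free extreme, then viewing a dilation $A=V^\ast DV$ ($V$ an isometry) as the one‑term matrix convex combination with $\gamma_1=V$ forces, by Definition~\ref{fext}, $D\sim_u A$ or $D\sim_u A\oplus Z$ with $Z\in K$, i.e.\ $D$ is a trivial dilation; and viewing a nontrivial orthogonal decomposition $A\sim_u B\oplus C$ (with $B,C\in K$ of positive size, obtained by compressing to a reducing subspace) as the two‑term combination with coefficients $\begin{bmatrix}I&0\end{bmatrix}$ and $\begin{bmatrix}0&I\end{bmatrix}$ contradicts freeness, so $A$ is irreducible.

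For the converse — the substantive part — assume $A\in K(n)$ is irreducible and every dilation of $A$ in $K$ is trivial, and let $A=\sum_{i=1}^k\gamma_i^\ast Y^i\gamma_i$ be a matrix convex combination with all $\gamma_i$ nonzero. Set $Y=\bigoplus_i Y^i\in K$, let $\gamma\colon\C^n\to\bigoplus_i\C^{n_i}$ be the isometry with blocks $\gamma_1,\dots,\gamma_k$, so $\gamma^\ast Y\gamma=A$ (in particular $A\in\operatorname{mconv}(Y)$), and let $\rho\colon\mathcal{S}_Y\to M_n(\C)$ be the UCP map with $\rho(Y)=A$. The crux is that $\rho$ is maximal. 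First, $\rho$ has no nontrivial \emph{finite-dimensional} dilation: a finite-dimensional dilation $\nu$ of $\rho$ gives a dilation $\nu(Y)$ of $A$ inside $\mathcal{W}(Y)\subseteq K$, which is trivial by hypothesis, i.e.\ $\nu(Y)\sim_u A\oplus B$; reading this unitary equivalence of tuples on $I,Y_j,Y_j^\ast$ yields $\nu\sim_u\rho\oplus(\cdot)$. Second, since $\mathcal{S}_Y$ is finite-dimensional, the absence of nontrivial finite-dimensional dilations already forces maximality: an arbitrary nontrivial dilation $\psi$ of $\rho$ can be compressed to the finite-dimensional subspace spanned by the original copy of $\C^n$ together with its images under the generators $\psi(Y_j),\psi(Y_j)^\ast$ ($j=1,\dots,d$), and this compression is a \emph{nontrivial finite-dimensional} dilation of $\rho$ — a contradiction.

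Once $\rho$ is known to be maximal, Theorem~\ref{thm:maxext} gives that $\rho$ has a unique UCP extension to $C^\ast(Y)$ and that it is a representation; since $\gamma^\ast(\cdot)\gamma\colon C^\ast(Y)\to M_n(\C)$ is also a UCP extension of $\rho$, the two coincide, so $\gamma^\ast(\cdot)\gamma$ is multiplicative. Expanding $\gamma^\ast Y_j^\ast Y_j\gamma=(\gamma^\ast Y_j^\ast\gamma)(\gamma^\ast Y_j\gamma)$ (and the analogue with $Y_j$ and $Y_j^\ast$ interchanged) gives $Y_j\gamma=\gamma A_j$ and $Y_j^\ast\gamma=\gamma A_j^\ast$ for all $j$; reading off block rows, $Y^i_j\gamma_i=\gamma_i A_j$ and $(Y^i_j)^\ast\gamma_i=\gamma_i A_j^\ast$ for all $i,j$. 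Hence $\gamma_i^\ast\gamma_i$ commutes with every $A_j$, so irreducibility of $A$ forces $\gamma_i^\ast\gamma_i=t_iI_n$ with $t_i>0$ (and $\sum_i t_i=1$); thus $W_i:=t_i^{-1/2}\gamma_i$ is an isometry intertwining $A$ and $Y^i$, and $Y^i\sim_u A\oplus Z^i$ where $Z^i\in K$ is the compression of $Y^i$ to $\operatorname{range}(W_i)^\perp$. If $n_i=n$ this reads $Y^i\sim_u A$, and if $n_i>n$ it reads $Y^i\sim_u A\oplus Z^i$ with $Z^i\in K$ — exactly the two cases of Definition~\ref{fext}. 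So $A$ is free extreme, finishing the ``Equivalently'' clause.

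Finally, the first statement follows by running the same argument with the finite-dimensional operator system $\mathcal{S}_T$ in place of $\mathcal{S}_Y$: for $A=\phi(T)\in\mathcal{W}_n(T)$ with $\phi$ UCP, $A$ is a free extreme point of $\mathcal{W}(T)$ precisely when $\phi$ is maximal and irreducible, which by Theorem~\ref{thm:maxext} says $\phi$ is the restriction to $\mathcal{S}_T$ of a (necessarily finite-dimensional) boundary representation $C^\ast(\mathcal{S}_T)\to M_n(\C)$, in accordance with the cited results \cite{EHKM,kls14}. I expect the one genuine obstacle to be the maximality of $\rho$: the hypothesis constrains only the finite-dimensional dilations that remain inside $K$, whereas maximality of a UCP map concerns arbitrary dilations, and bridging this gap is exactly what finite-dimensionality of $\mathcal{S}_T$ (via the compression step) is used for; everything else is bookkeeping with direct sums, compressions by isometries, and the tuple/UCP-map dictionary.
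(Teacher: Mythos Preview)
Your argument is correct. The paper itself does not supply a proof of this corollary; it states it as a consequence of Theorem~\ref{thm:maxext} together with the cited results \cite[Theorem~1.1(3)]{EHKM} and \cite[Theorem~4.2]{kls14}, and then remarks (without proof) that it suffices to check dilations of dimension $n+1$. Your write-up therefore fills in precisely the details the paper elides.

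A few comparative comments. Your proof of the converse in the ``Equivalently'' clause is genuinely self-contained relative to the paper: rather than invoking the external equivalence ``free extreme $\Leftrightarrow$ finite-dimensional boundary representation'', you build the auxiliary operator system $\mathcal{S}_Y$ for $Y=\bigoplus_i Y^i$, establish maximality of $\rho$ directly, and then exploit multiplicativity of the unique extension $\gamma^\ast(\cdot)\gamma$ to obtain the intertwining $Y^i_j\gamma_i=\gamma_i A_j$ and $(Y^i_j)^\ast\gamma_i=\gamma_i A_j^\ast$, hence $\gamma_i^\ast\gamma_i\in\{A_j,A_j^\ast\}'=\C I_n$. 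This is a clean and instructive route. Your compression step---passing from an arbitrary dilation to a finite-dimensional one by compressing to $\mathrm{span}\{V\C^n,\ \psi(Y_j)V\C^n,\ \psi(Y_j)^\ast V\C^n\}$---is exactly the content of the paper's Remark following the corollary, and you correctly identify that the finite-dimensionality of $\mathcal{S}_Y$ (and of $\mathcal{S}_T$) is what makes this work. Note incidentally that since $Y$ is a \emph{matrix} tuple, $C^\ast(Y)$ is already finite-dimensional, so one could alternatively bypass the compression step by taking a (necessarily finite-dimensional) minimal Stinespring dilation of any UCP extension of $\rho$ to $C^\ast(Y)$; your argument, however, handles the $\mathcal{S}_T$ case (where $T$ may act on an infinite-dimensional space) uniformly, which is a small advantage.
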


\begin{remark} 
To test that $A \in K(n)$ is a free extreme point of $K$, it is only necessary to check dilations of matrix dimension $n + 1$. In particular, any nontrivial dilation of $A$ will compress to a nontrivial dilation of $A$ that has dimension $n + 1$.
\end{remark}

Consequently, the spanning problem for free extreme points is equivalent to determining if the set of \textit{finite-dimensional} boundary representations is sufficient, as opposed to the set of \textit{all} boundary representations. This distinction is a crucial one; while boundary representations are the natural extreme points from an operator algebraist's point of view, they are not always appropriate when dealing with the setting of free spectrahedra, which we consider in the next section. Said differently, a finite-dimensional problem suggests a finite-dimensional solution.

Whether one works in the finite-dimensional setting or not, the relationship between unique UCP extensions and maximal UCP maps is quite powerful. Consider, for example, when an operator system is spanned by unitaries, as a unitary is precisely a contraction operator that cannot be nontrivially dilated to another contraction. This is particularly useful when dealing with operator systems inside group $C^*$-algebras, as in \cite{FKPT14, Kav14, KPTT13}. Further, as a consequence of \cite{DK15} (see also \cite{Arv08,DM05}), if one extends the notion of a compact matrix convex set to include ``infinite-dimensional" levels as in \cite{DK}, then the resulting notion of maximal irreducible element makes no distinction between finite or infinite dimension. Such ``operator convex" sets are thus the operator convex hull of their, perhaps infinite-dimensional, maximal irreducible elements. For the types of problems we consider, it is natural to ask if what starts in finite dimensions stays in finite dimension. Because of this finite-dimensional focus, we will not expand on the operator convex point of view. 

\section{Free spectrahedra and free convex semialgebraic geometry}\label{sec:freespecsec}

A natural class of matrix convex sets to consider consists of sets defined by noncommutative polynomial inequalities. These sets, called free spectrahedra, arise in settings such as quantum information and games \cite{BHK,BN,BJN,DDN,PR} and linear systems control \cite{HKM12,HPMV09,dOHMP09}. Furthermore, they enable tractable relaxations of problems that are NP-hard in the classical setting \cite{HKM13}. 

\subsection{Linear matrix inequalities and free spectrahedra}

Given a $g$-tuple $A \in SM_m(\FF)^g$, define the monic linear pencil $L_A(x)$ by
\[
L_A (x):= I_m-A_1 x_1 -\dots -A_g x_g.
\]
The inequality $L_A(x) \succeq 0$ is called a \textbf{linear matrix inequality}. Classically, the set of $x \in \RR^g$ such that $L_A(x) \succeq 0$ is a convex set called a spectrahedron. These spectrahedra are the feasibility domains of semidefinite programs and, as a consequence, arise in a number of applications, e.g., see \cite{BPT12,BEFB94,PS03}. 

The dimension-free generalization of a spectrahedron is obtained by allowing the tuple $X$ to be a $g$-tuple of self-adjoint matrices that enter the linear matrix inequality via the Kronecker (tensor) product. More precisely, for $X \in SM_n (\FF)^g$, define the (free) monic linear pencil $L_A(X)$ by
\[
L_A (X):= I_m \otimes I_n - A_1 \otimes X_1 - \dots - A_g \otimes X_g,
\]
and also let 
\[ \Lambda_A (X) := A_1 \otimes X_1 + \dots + A_g \otimes X_g\] denote the linear part of $L_A (X)$ for future reference. Define the matrix convex set $\cD_A$, called a \textbf{free spectrahedron}, by 
\[
\cD_A := \bigcup_{n = 1}^\infty \cD_A (n), \hspace{.8 cm} \cD_A (n) := \{X \in SM_n(\FF)^g \ : \ L_A (X) \succeq 0 \}.
\]

The extreme points of free spectrahedra are particularly well understood. In fact, as shown by the following theorem of \cite{EEHK22}, determining if an element of a free spectrahedron is an extreme point is equivalent to solving a linear system. See also \cite{EHKM} and \cite{Kr} for related results. 

\begin{theorem}\cite[Theorem 2.6]{EEHK22} \label{theorem:Equations}
Let $\cD_A \subset SM(\FF)^g $ be a bounded free spectrahedron and $X \in \cD_A(n)$. Additionally let $K_{A,X}$ be a matrix whose columns form an orthonormal basis for the kernel of $L_A(X)$. 
\begin{enumerate}[\rm (1)]
    \item \label{it:ArvEq}
    $X$ is a free extreme point of $\cD_A$ if and only if $X$ is irreducible and the only solution to the homogeneous linear equations
    \begin{equation}
    \label{eq:ArvEq}
         (A_1 \otimes \beta_1^* + \dots + A_n\otimes\beta_g^*) K_{A,X} = 0
    \end{equation}
    in the unknown $(\beta_1, \ldots, \beta_g) \in M_{n,1}(\FF)^g$ is $(\beta_1, \ldots, \beta_g) = 0$.
    \item \label{it:EucEq}
    $X$ is a classical extreme point of $\cD_A(n)$ if and only if the only solution to the homogeneous linear equations
    \[
       (A_1 \otimes \beta_1 + \dots + A_n\otimes\beta_g) K_{A,X} = 0
    \]
    in the unknown $(\beta_1, \ldots, \beta_g) \in SM_n(\FF)^g$ is $(\beta_1, \ldots, \beta_g) = 0$.
    \item \label{it:MatEq}
    $X$ is a matrix extreme point of $\cD_A$ if and only if the only solution to the homogeneous linear equations
    \begin{align*}
        (I\otimes\beta_0 + A_1 \otimes \beta_1 + \dots + A_n\otimes\beta_g) K_{A,X} = 0 \\
        \mathrm{tr}(\beta_0 + X_1^* \beta_1 + \dots + X_g^* \beta_{g}) = 0
    \end{align*}
    in the unknown $(\beta_0,\beta_1,\dots,\beta_g) \in SM_{n}(\FF)^{g+1}$ is $(\beta_0,\beta_1,\dots,\beta_g) = 0$.
\end{enumerate}
\end{theorem}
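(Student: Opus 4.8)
The plan is to translate the definition of each class of extreme point of $\cD_A(n)$ into a statement about the failure of a certain weak separation or dilation, and then to linearize that statement using the kernel of $L_A(X)$. The underlying principle is the same in all three parts: for a tuple $X\in\cD_A(n)$ lying on the relative boundary of $\cD_A(n)$, a "perturbation direction" $(\beta_1,\dots,\beta_g)$ that keeps one inside $\cD_A$ to first (and, by homogeneity of the pencil, to all) orders is exactly a direction along which $L_A(X)$ stays positive semidefinite, and the linearized obstruction to such a perturbation is that $\Lambda_A(\beta)$ annihilate the kernel $\ran K_{A,X}$. So the first step is to record the basic perturbation lemma: if $L_A(X)K_{A,X}=0$ and $\Lambda_A(\beta)K_{A,X}=0$, then $X\pm tY$ stays in $\cD_A(n)$ for small $t$ whenever $Y$ is built from $\beta$ in the appropriate way (self-adjoint $\beta$ for the classical direction, the Arveson-type skew pairing for the free direction), and conversely any genuine line segment, dilation, or matrix convex decomposition through $X$ produces such a $\beta$. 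This is where the boundedness hypothesis on $\cD_A$ is used, to guarantee that the relevant compressions and dilations land back in $\cD_A$ and that kernels behave well.

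For part \eqref{it:EucEq}, the classical extreme point case, I would argue directly: $X$ fails to be a classical extreme point of $\cD_A(n)$ iff there is a nonzero self-adjoint $g$-tuple $Y\in SM_n(\FF)^g$ with $X\pm Y\in\cD_A(n)$, i.e. $L_A(X)\pm\Lambda_A(Y)\succeq 0$. A standard Schur/kernel argument (as in \cite{EHKM}) shows this two-sided semidefiniteness is equivalent to $\Lambda_A(Y)$ vanishing on $\ran K_{A,X}$, which is precisely $(A_1\otimes Y_1+\dots+A_g\otimes Y_g)K_{A,X}=0$. So the solution set of the stated homogeneous system is (up to scaling) exactly the set of line-segment directions through $X$, and triviality of that solution set is exactly classical extremality.

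For part \eqref{it:ArvEq}, I would use Corollary~\ref{cor:TrivDilations}: $X$ is a free extreme point iff $X$ is irreducible and every dilation of $X$ inside $\cD_A$ is trivial. One then checks that $1$-dimensional nontrivial dilations of $X$ that remain in $\cD_A$ correspond, via the pencil, to the Arveson-type equation \eqref{eq:ArvEq} having a nonzero solution $\beta\in M_{n,1}(\FF)^g$ — the skew pairing $A_j\otimes\beta_j^*$ (versus the symmetric $A_j\otimes\beta_j$ of part (2)) arises precisely because a dilation inserts an off-diagonal block rather than a symmetric perturbation. Combined with the Remark that it suffices to test dilations of dimension $n+1$, this gives the equivalence; irreducibility must be carried as a separate hypothesis since the linear system alone cannot detect reducibility.

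For part \eqref{it:MatEq}, the matrix extreme case, I would invoke Proposition~\ref{pr: gama ext} (the $\Gamma_n$ correspondence) to reduce matrix extremality of $X$ to classical extremality of $(\gamma^*\gamma,\gamma^*X\gamma)$ in $\Gamma_n(\cD_A)$ for invertible $\gamma$; taking $\gamma$ a scalar multiple of the identity, the extra coordinate "$\gamma^*\gamma$" contributes the free parameter $\beta_0$ multiplying $I$ in the pencil and simultaneously imposes the trace side-condition $\mathrm{tr}(\beta_0+X_1^*\beta_1+\dots+X_g^*\beta_g)=0$, which records that the perturbation must respect the normalization $\mathrm{tr}(\gamma^*\gamma)=1$. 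Running the same linearization as in part (2) on the augmented pencil $I\otimes\beta_0+\Lambda_A(\beta)$ then yields the stated system. The main obstacle I anticipate is part \eqref{it:ArvEq}: getting the off-diagonal dilation block to interact correctly with $K_{A,X}$ and verifying that a solution to \eqref{eq:ArvEq} really does produce a bona fide nontrivial dilation inside $\cD_A$ (not merely a formal first-order one) requires care with the positivity of the dilated pencil and with the boundedness of $\cD_A$; the symmetric cases (2) and (3) are comparatively routine once the perturbation lemma is in hand.
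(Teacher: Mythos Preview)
Your plan for part~\eqref{it:ArvEq} is exactly the paper's route: invoke Corollary~\ref{cor:TrivDilations} to reduce free extremality to the nonexistence of nontrivial $(n{+}1)$-dilations, then use the canonical shuffle and the Schur complement of the block pencil
\[
\begin{bmatrix} L_A(X) & -\Lambda_A(\beta)\\ -\Lambda_A(\beta^*) & L_A(\psi)\end{bmatrix}
\]
to see that positivity forces $\ker L_A(X)\subseteq\ker\Lambda_A(\beta^*)$, and conversely that any $\beta$ satisfying this kernel inclusion can be scaled to produce a genuine dilation in $\cD_A$ (taking $\psi=0$). So the obstacle you flagged as the hardest step is in fact fully resolved by the Schur complement; boundedness is not needed for this direction, only for the background fact that free extreme points are irreducible tuples with the dilation property. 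Part~\eqref{it:EucEq} is likewise exactly the standard line-segment argument you describe.

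Your approach to part~\eqref{it:MatEq} via Proposition~\ref{pr: gama ext} is a genuinely different route from what the paper intends by ``similar''. The paper's implicit argument treats~\eqref{it:MatEq} in parallel with~\eqref{it:EucEq}, using a direct characterization of matrix extreme points (as in \cite{EHKM} or \cite{HL}) rather than passing through $\Gamma_n$. Your $\Gamma_n$ idea correctly explains the provenance of $\beta_0$ and the trace constraint, but there is a gap in executing it: $\Gamma_n(\cD_A)$ is not a spectrahedron in any obvious way (its elements are parametrized nonlinearly by $(\gamma,Y)$ with $Y\in\cD_A$), so you cannot simply ``run the same linearization as in part~(2)'' on it. You would need to show that the tangent directions to $\Gamma_n(\cD_A)$ at $(c^2 I, c^2 X)$ are exactly those $(\beta_0,\beta)$ with $(I\otimes\beta_0+\Lambda_A(\beta))K_{A,X}=0$ and trace zero, which requires a separate computation rather than an appeal to~\eqref{it:EucEq}. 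This can be done, and your route has the conceptual payoff of unifying~\eqref{it:MatEq} with the $\Gamma_n$ machinery already used elsewhere in the paper, but the direct route is shorter.
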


\begin{proof}
The proofs for all three cases are similar, so we restrict our attention to item \ref{it:ArvEq}. Using Corollary \ref{cor:TrivDilations}, the tuple $X \in \cD_A(n)$ is a free extreme point of $\cD_A$ if and only if $X$ is irreducible and the dilations of $X$ of dimension $n + 1$ satisfy
\[
L_A \left(\begin{bmatrix}
    X & \beta \\
    \beta^* & \psi 
\end{bmatrix} \right) \succeq 0 \,\,\,\,\,\, \implies \,\,\,\,\,\, \beta = 0.
\]
Conjugating by the canonical shuffle, see \cite[Chapter 8]{Pa}, shows that
\[
L_A \left(\begin{bmatrix}
    X & \beta \\
    \beta^* & \psi 
\end{bmatrix} \right) \,\,\,\, \sim_u \,\,\,\,
\begin{bmatrix}
    L_A (X) & -\Lambda_A (\beta) \\
    -\Lambda_A (\beta^*) & L_A (\psi)
\end{bmatrix}.
\]
Taking the Schur complement shows that
\[
\begin{bmatrix}
    L_A (X) & -\Lambda_A (\beta) \\
    -\Lambda_A (\beta^*) & L_A (\psi)
\end{bmatrix} \succeq 0
\]
if and only if $L_A (\psi) \succeq 0$ and
\[
L_A (X) - \Lambda_A (\beta) L_A(\psi)^\dagger \Lambda_A (\beta^*) \succeq 0.
\]
Here $L_A(\psi)^\dagger$ denotes the Moore-Penrose pseudo inverse of $L_A (\psi)$. If the above equation holds, then it is straightforward to check that the kernel of $\Lambda_A(\beta)^*$ must contain the kernel of $L_A(X)$. That is, equation \eqref{eq:ArvEq} must hold. On the other hand, if \eqref{eq:ArvEq} holds, then by choosing $\alpha > 0$ sufficiently small, one has
\[
L_A (X) - \Lambda_A (\alpha \beta) \Lambda_A (\alpha \beta^*) = L_A (X) - \alpha^2 \Lambda_A (\beta) \Lambda_A (\beta^*),
\]
in which case 
\[
L_A \left(\begin{bmatrix}
    X & \alpha \beta \\
    \alpha \beta^* & 0
\end{bmatrix} \right) \succeq 0.
\]
In short, finding tuples $\beta$ that give valid dilations of $X$ is equivalent to solving equation \eqref{eq:ArvEq}.\looseness=-1
\end{proof}

It is important to note that in each of the three cases of Theorem \ref{theorem:Equations}, the tuple $\beta$ is chosen from a different domain. For example, case \ref{it:ArvEq} concerns a $g$-tuple of column vectors, whereas case \ref{it:EucEq} concerns a $g$-tuple of self-adjoint square matrices. However, when $n = 1$ and $\mathbb{F} = \mathbb{R}$, those two domains are the same. So, in addition to being computationally useful, Theorem \ref{theorem:Equations} can be used to guarantee the existence of free extreme points at level one of bounded free spectrahedra that are closed with respect to complex conjugation. 


\begin{corollary}\cite[Proposition 6.1]{EHKM}
\label{cor:SpecLevel1Extreme}
    Let $\cD_A \subset SM(\FF)^g$ be a free spectrahedron and assume $\cD_A(2) = \overline{\cD_A(2)}$. Then $x \in \cD_A(1)$ is a classical extreme point if and only if it is a free extreme point. 
\end{corollary}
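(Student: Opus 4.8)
The plan is to deduce the corollary directly from part \ref{it:ArvEq} of Theorem \ref{theorem:Equations}, specialized to $n = 1$, and from part \ref{it:EucEq} in the same specialization. First I would observe that a free extreme point is always a classical extreme point (of the appropriate level), so one implication is free and the content is the converse: a classical extreme point $x \in \cD_A(1)$ must be a free extreme point. At level $n = 1$ any nonzero tuple is automatically irreducible, so the irreducibility requirement in \ref{it:ArvEq} is vacuous; thus the only thing to check is that the homogeneous linear system \eqref{eq:ArvEq} has only the trivial solution. Here is the key point: when $n = 1$, the unknown $(\beta_1, \dots, \beta_g)$ ranges over $M_{1,1}(\FF)^g = \FF^g$, while in \ref{it:EucEq} the unknown ranges over $SM_1(\FF)^g$. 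Over $\FF = \RR$ these two domains coincide (both are $\RR^g$), and moreover $\beta_j^* = \beta_j$ for scalars, so the two linear systems $(A_1 \otimes \beta_1^* + \dots + A_g \otimes \beta_g^*)K_{A,x} = 0$ and $(A_1 \otimes \beta_1 + \dots + A_g \otimes \beta_g)K_{A,x} = 0$ are literally the same system. Therefore at level one over $\RR$, ``classical extreme'' and ``free extreme'' are characterized by the identical condition, hence equivalent.

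The one hypothesis that needs to be invoked carefully is $\cD_A(2) = \overline{\cD_A(2)}$. Theorem \ref{theorem:Equations} is stated for bounded free spectrahedra, and its proof (via Corollary \ref{cor:TrivDilations}) tests dilations of dimension $n+1 = 2$; to run the Schur complement / pseudo-inverse argument producing a genuine dilation inside $\cD_A$, one needs $\cD_A(2)$ to be closed so that the limiting/perturbation tuple $\begin{bmatrix} x & \alpha\beta \\ \alpha\beta^* & 0 \end{bmatrix}$ actually lies in $\cD_A(2)$ and the free-extreme test is the stated linear algebra condition. So I would phrase the argument as: under the closedness assumption on $\cD_A(2)$, the characterization of level-one free extreme points via \eqref{eq:ArvEq} is valid (this requires only the $n = 2$ portion of the machinery, not global boundedness), and it coincides with the classical level-one characterization of \ref{it:EucEq}. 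I would note explicitly that the boundedness assumption in Theorem \ref{theorem:Equations} can be relaxed to closedness of $\cD_A(2)$ for this particular statement, or alternatively cite \cite[Proposition 6.1]{EHKM} directly for the precise hypotheses.

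The structure of the write-up would then be: (1) recall that free extreme $\Rightarrow$ classical extreme; (2) for the converse, fix a classical extreme point $x \in \cD_A(1)$, note $x$ is irreducible trivially; (3) apply the level-one, real-scalar identification of the domains in \ref{it:ArvEq} and \ref{it:EucEq}, observing $\beta_j = \beta_j^*$ for $1\times 1$ real matrices; (4) conclude the two homogeneous systems coincide, so $x$ satisfies \eqref{eq:ArvEq} iff it is classically extreme, giving that $x$ is free extreme. I expect the main (really the only) subtlety to be bookkeeping around the hypotheses: making sure the closedness of $\cD_A(2)$ is exactly what licenses the use of the dilation-theoretic characterization at level one, and being careful that this is genuinely an ``iff'' — i.e., that $K_{A,x}$ exists and the Schur-complement argument goes through — without silently importing global boundedness. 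No genuinely hard estimate or construction is needed; it is a matter of matching domains and citing the right prior result.
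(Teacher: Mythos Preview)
Your real case is exactly the paper's argument: at level one over $\RR$, the domains in parts \ref{it:ArvEq} and \ref{it:EucEq} of Theorem \ref{theorem:Equations} coincide, so classical extreme and free extreme are characterized by the same linear system. That part is fine.

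The gap is your reading of the hypothesis $\cD_A(2) = \overline{\cD_A(2)}$. This is not topological closure; free spectrahedra are defined by $L_A(X) \succeq 0$ and are therefore automatically closed at every level, so that reading would be vacuous. The overline here means entrywise complex conjugation, i.e., $\cD_A(2)$ is assumed closed under $X \mapsto \overline{X}$. This hypothesis is trivially satisfied when $\FF = \RR$ and does no work there; its entire purpose is the case $\FF = \CC$, which your proposal does not address. Over $\CC$ the two domains do \emph{not} coincide: in \ref{it:ArvEq} the unknown $\beta$ ranges over $M_{1,1}(\CC)^g = \CC^g$, while in \ref{it:EucEq} it ranges over $SM_1(\CC)^g = \RR^g$. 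A classical extreme point only rules out real $\beta$, so one must still argue that no genuinely complex $\beta$ solves \eqref{eq:ArvEq}. The paper handles this by splitting a complex solution into real and imaginary parts and using closure under complex conjugation to see that each part is itself a (real) solution, hence zero. Your discussion of the hypothesis as a substitute for boundedness in the Schur complement argument is therefore misdirected; the boundedness issue is separate (the paper simply restricts to the bounded case), and the conjugation hypothesis is what makes the complex case go through.
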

\begin{proof}
    We will consider only the case that $\cD_A$ is bounded. If $\FF = \RR$, the conditions for a tuple to be a free or classical extreme point of $\cD_A(1)$ in Theorem \ref{theorem:Equations} above are identical. The proof over $\CC$ when the second level of $\cD_A$ is closed under complex conjugation can be obtained by considering real and imaginary parts of solutions to \eqref{eq:ArvEq} above.
\end{proof}

\subsection{A spanning result for free spectrahedra}

Bounded free spectrahedra that are closed under complex conjugation form a large class of matrix convex sets that are known to be the matrix convex hull of their free extreme points. Furthermore, for these sets there is a Caratheodory-like bound on the sum of sizes of the extreme points required.

\begin{theorem}\label{thm:dropspan}\cite[Theorem 1.3]{EH}
    Let $\cD_A \subset SM(\FF)^g$ be a bounded free spectrahedron that is closed under complex conjugation and let $X \in \cD_A(n)$. Then $X$ can be written as a matrix convex combination
    \[
    X= \sum_{i=1}^k \gamma_i^* F^i \gamma_i \qquad \mathrm{s.t.} \qquad \sum_{i=1}^k \gamma_i^* \gamma_i = I
    \]
    of free extreme points $F^i \in \cD_A(n_i)$. Furthermore, if $\FF=\RR$, then the extreme points can be chosen so that $\sum_{i=1}^k n_i \leq n(g+1)$. Similarly, if $\FF=\CC$, then $\sum_{i=1}^k n_i \leq 2n(g+1)$. These expressions can be computed via semidefinite programming. 
\end{theorem}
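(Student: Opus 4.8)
The plan is to combine the free-extreme-point dilation machinery of Corollary \ref{cor:TrivDilations} with a maximality/dilation-then-project argument, and to extract the Carath\'eodory-type bound from the linear-algebraic characterization in Theorem \ref{theorem:Equations}\eqref{it:ArvEq}. First I would show the qualitative spanning statement: given $X \in \cD_A(n)$, I want to produce a free extreme point $F \in \cD_A(N)$ (for some $N \geq n$) that dilates $X$, i.e. $F = \begin{bmatrix} X & * \\ * & * \end{bmatrix}$. One builds a maximal dilation: iteratively dilate $X$ to strictly larger tuples in $\cD_A$ whenever a nontrivial dilation exists, using the Schur-complement computation from the proof of Theorem \ref{theorem:Equations} to see that nontrivial dilations correspond to nonzero solutions $\beta$ of \eqref{eq:ArvEq}. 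Boundedness of $\cD_A$ forces this process to terminate at a finite level (the dimension cannot grow without bound since the whole spectrahedron is bounded and each dilation step genuinely enlarges the support of the tuple in an irreducible direction); the terminal tuple is irreducible up to splitting off summands, so after discarding trivial direct summands we reach a genuine free extreme point $F$ with $X$ sitting as a compression. Then $X = V^* F V$ for the inclusion isometry $V$, which is already a (one-term) matrix convex combination of a free extreme point — except $F$ itself may be reducible; so one must be a little careful to peel off the irreducible free extreme summands and write $X$ as a matrix convex combination $\sum \gamma_i^* F^i \gamma_i$ of the irreducible pieces $F^i$, each of which is a free extreme point of $\cD_A$ by Corollary \ref{cor:TrivDilations}.

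The quantitative bound is the heart of the matter and is where I expect the real work to lie. Here the idea is to run the dilation process \emph{efficiently}: rather than dilating one irreducible direction at a time, at each stage solve the linear system \eqref{eq:ArvEq} for $X$ and note that the solution space of $\beta$-tuples lives in $M_{n,1}(\FF)^g$, which has real dimension $ng$ (resp.\ $2ng$ over $\CC$); a nonzero solution $\beta$ yields a dilation of $X$ by a block of size at most... one needs to bound how much the size grows per step and how many steps are needed. The cleaner route, and the one I would actually pursue, is to reformulate the whole thing as a single semidefinite feasibility problem: one searches directly for a decomposition $X = \sum \gamma_i^* F^i \gamma_i$ with $\sum n_i$ bounded, and shows feasibility by a dimension count. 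Concretely, consider the set of all dilations $D \in \cD_A$ of $X$ of a fixed total size; the free extreme ones among maximal such $D$ are cut out by the equations of Theorem \ref{theorem:Equations}. A rank/dimension argument — counting the self-adjoint parameters available ($n(g+1)$ is exactly $\dim$ of the span of $\{I\} \cup \{A_j \otimes (\cdot)\}$ acting on an $n$-dimensional ancillary space, which is why the constant is $n(g+1)$) — shows that one never needs the ancillary dimension to exceed $g+1$ times $n$. I would make this precise by the following bootstrapping: if $X$ is not already free extreme, solve \eqref{eq:ArvEq}; the nontrivial solution gives a dilation, and crucially one can choose it so that the \emph{new} kernel of $L_A$ is strictly larger, so the ``deficiency'' that must be killed strictly decreases; since that deficiency is controlled by a space of dimension $\leq n(g+1) - n$ at the outset, the total added dimension telescopes to at most $ng$ (real case), giving $\sum n_i \le n(g+1)$.

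For the complex case, the factor of $2$ enters exactly as in the proof of Corollary \ref{cor:SpecLevel1Extreme}: over $\CC$ the relevant parameter space \eqref{eq:ArvEq} has twice the real dimension, so the same telescoping argument yields the bound $\sum n_i \le 2n(g+1)$. Finally, the assertion that the decomposition is computable by semidefinite programming follows because every ingredient — testing feasibility of $L_A(D)\succeq 0$ for dilations $D$ of bounded size, and checking the linear equations of Theorem \ref{theorem:Equations} — is a semidefinite or linear feasibility condition, so the whole search can be phrased as an SDP (one simply fixes the target sizes $n_i$ consistent with the proven bound and solves for the $F^i$ and $\gamma_i$; the bound guarantees the SDP is feasible).

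\textbf{Main obstacle.} The delicate point is the quantitative control: showing that the dilation process both \emph{terminates} and does so \emph{within} the stated size budget, rather than merely at some finite stage. This requires a careful bookkeeping argument that each dilation step strictly reduces a dimension-type invariant bounded by $n(g+1)$ (or $2n(g+1)$), which is essentially the content of \cite[Theorem 1.3]{EH} and is where the Carath\'eodory flavor of the result really comes from; the qualitative spanning statement by contrast is a fairly soft consequence of maximal dilations plus boundedness.
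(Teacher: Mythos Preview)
Your overall skeleton matches the paper: iteratively $1$-dilate $X$ inside $\cD_A$, track a dimension-type invariant that strictly drops at each step, and stop when the invariant hits zero, at which point the dilation is a direct sum of free extreme points. But there is a genuine gap at the point you flag as the ``main obstacle,'' and your proposed patch for it is not correct. You write that ``boundedness of $\cD_A$ forces this process to terminate at a finite level.'' It does not: the Cuntz row-contraction ball (Example~\ref{example:Cuntz}) is a bounded free spectrahedron with \emph{no} free extreme points, so an unstructured dilation process there never terminates. Boundedness is used only to guarantee that the one-step optimizations below have maximizers; termination must come from somewhere else.

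What the paper actually does is isolate the right invariant and the right dilation step. The invariant is the \emph{dilation subspace}
\[
\mathfrak{K}_{A,X}=\{\beta\in M_{n,1}(\RR)^g:\ker L_A(X)\subseteq\ker\Lambda_A(\beta^*)\},
\]
which has dimension at most $ng$, and which vanishes precisely when $X$ is (unitarily equivalent to) a direct sum of free extreme points. The step is a \emph{maximal $1$-dilation}: first fix a nonzero $\hat\beta\in\mathfrak{K}_{A,X}$ and scale it so that $\alpha=1$ is the maximal scalar for which some $\psi$ makes $\begin{bmatrix}X&\alpha\hat\beta\\ \alpha\hat\beta^*&\psi\end{bmatrix}\in\cD_A$; then choose $\hat\psi$ to be a classical extreme point of the resulting slice $\{\psi: \begin{bmatrix}X&\hat\beta\\\hat\beta^*&\psi\end{bmatrix}\in\cD_A\}$. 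The substantive lemma (Theorem following Definition~\ref{def:max1diDef}) is that for such $\hat Y$ one has $\dim\mathfrak{K}_{A,\hat Y}<\dim\mathfrak{K}_{A,X}$; this is not automatic, since passing from size $n$ to $n+1$ a priori enlarges the ambient space for $\beta$ by $g$ real dimensions. The proof is a Schur-complement/LDL$^*$ computation showing that a failure of strict decrease would produce either a larger admissible $\alpha$ (contradicting the first maximization) or a segment through $\hat\psi$ inside the slice (contradicting extremality of $\hat\psi$). Your heuristic ``choose the dilation so the new kernel of $L_A$ is strictly larger'' points in the right direction but is neither the invariant that telescopes cleanly nor a mechanism that forces the drop; the two-stage optimization is exactly what is missing from your outline. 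Once this lemma is in hand, at most $ng$ steps are needed, giving total size $n+ng=n(g+1)$ in the real case; the complex bound and the SDP computability then follow as you indicate.
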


The above theorem was later extended in \cite{Eve23} to include generalizations of free spectrahedra that are defined by linear pencils with compact operator coefficients. An algorithm for computing expansions of elements of a free spectrahedron in terms of free extreme points is also available in \cite{EEdO+}. To simplify the argument for Theorem \ref{thm:dropspan}, we focus on the proof when $\FF = \RR$. The proof when $\FF = \CC$ and $\cD_A $ is closed under complex conjugation is accomplished by reducing to the real case. This reduction is originally done for free spectrahedra in \cite[Section 3]{EH}. Later, \cite[Theorem 2.4]{Eve23} proved that for any matrix convex set $K \subset SM(\C)^g$ that is closed under complex conjugation, free extreme points span $K$ over the complex numbers if and only if free extreme points span $K \cap SM(\RR)^g$ over the real numbers. 

Inspired by works such as \cite{Arv08,DK15,DM05} that give spanning results in the infinite-dimensional setting, a key idea in the proof of this theorem is the notion of a maximal 1-dilation, which can be used to ``exhaust the set of valid dilations" of a given tuple. 

Let $\cD_A \subset SM(\RR)^g$ be a bounded real free spectrahedron and let $X \in \cD_A$. Define the \textbf{dilation subspace} of $\cD_A$ at $X$, denoted $\mathfrak{K}_{A,X}$, by
\[
\mathfrak{K}_{A,X} := \{\beta \in M_{n,1} (\RR)^g \ : \ker L_A (X) \subseteq \ker \Lambda_A(\beta^*)\}.
\]
As a consequence of Theorem \ref{theorem:Equations}, a tuple $Y \in \cD_A$ is (up to unitary equivalence) a direct sum of free extreme points of $\cD_A$ if and only if $\dim \mathfrak{K}_{A,Y} = 0$. 

To ensure that this dilation process remains finite-dimensional, \cite{EH} introduced maximal 1-dilations, which increase the size of the current tuple by exactly one. The goal is to ensure that these one step dilations always decrease the dimension of the dilation subspace. If this is achieved, then since the dimension of the dilation subspace of any element of $\cD_A(n)$ is bounded above by $ng$, it will follow that the tuple $X$ can be dilated to a direct sum of free extreme points using a sequence of at most $ng$ maximal 1-dilations. The desired expression of $X$ as a matrix convex combination of those free extreme points is then straightforward to obtain.

\begin{definition}
    \label{def:max1diDef} 
Given a bounded real free spectrahedron $\cD_A \subset SM(\RR)^g$ and a tuple $X \in \cD_A (n)$, say the dilation 
\[
\hat{Y}=\begin{bmatrix}
X & \hbeta \\
\hbeta^* & \hpsi
\end{bmatrix} \in \cD_A (n+1)
\]
is a \textbf{maximal $1$-dilation} of $X$ if $\hbeta \in M_{n,1}(\RR)^g$ is nonzero and the following two conditions hold:
\begin{enumerate}
\item \label{it:1isMaximizerCompact} The real number $1$ satisfies
\[
\begin{array}{rllcl}  1 =&\underset{\alpha \in \RR, \psi \in \RR^g}{\max} \ \ \ \ \alpha \\
\mathrm{s.t.} & L_A \left(\begin{bmatrix}
X & \alpha \hbeta \\
\alpha \hbeta^* & \psi
\end{bmatrix}\right) \succeq 0.
\end{array}
\]
\item \label{it:hgamIsExtremeCompact} $\hpsi$ is an extreme point of the closed bounded convex set 
\begin{equation}
\gengam{X}{\hbeta}:=\label{eq:gammaset}
\left\{
\psi \in \RR^g \ : \ \begin{bmatrix}
    X  & \hbeta \\
    \hbeta^* & \psi
\end{bmatrix} \in \cD_A
\right\}.
\end{equation}
\end{enumerate}

\end{definition}

\begin{remark}
    For a fixed $X \in \cD_A$ and $\hbeta \in \mathfrak{K}_{A,X}$ that satisfies item \eqref{it:1isMaximizerCompact} above, the set defined in equation \eqref{eq:gammaset} is in fact a nonempty compact spectrahedron, which can be seen by applying the Schur complement. 
\end{remark}

\begin{theorem}\cite[Theorem 2.4]{EH}
Let $\cD_A \subset SM(\RR)^g$ be a bounded real free spectrahedron and $X \in \cD_A(n)$. Assume that $X$ is not unitarily equivalent to a direct sum of free extreme points of $\cD_A$. Then there exists a tuple $\hY \in \cD_A(n+1)$ that is a maximal 1-dilation of $X$. Furthermore, any such $\hY$ satisfies
\[
\dim \mathfrak{K}_{A,\hY} < \dim \mathfrak{K}_{A,X}.
\]
\end{theorem}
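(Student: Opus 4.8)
The plan is to establish two separate assertions: first, existence of a maximal $1$-dilation $\hY$ of $X$, and second, the strict decrease $\dim \fK_{A,\hY} < \dim \fK_{A,X}$ for any such $\hY$.

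For existence, I would start by choosing a nonzero $\hbeta \in \fK_{A,X}$; this is possible precisely because $X$ is not unitarily equivalent to a direct sum of free extreme points, so by the remark following the definition of the dilation subspace $\dim \fK_{A,X} > 0$. Given such a direction $\hbeta$, I would want it (after rescaling) to satisfy condition \eqref{it:1isMaximizerCompact}, i.e. that the maximal $\alpha$ with $\begin{bmatrix} X & \alpha\hbeta \\ \alpha\hbeta^* & \psi\end{bmatrix} \in \cD_A$ for some $\psi$ equals $1$. Since $\cD_A$ is bounded, this supremum is finite, and since $\hbeta \in \fK_{A,X}$ (so the dilation $\begin{bmatrix} X & 0 \\ 0 & \psi\end{bmatrix}$ with $\alpha = 0$ is feasible for suitable $\psi$, using boundedness and the Schur complement), the supremum is positive; rescaling $\hbeta$ makes it exactly $1$. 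A compactness argument then shows the maximum is attained. With $\hbeta$ now fixed, the set $\gengam{X}{\hbeta}$ is a nonempty compact spectrahedron by the remark, so it has an extreme point $\hpsi$ by the classical Krein-Milman theorem; this gives condition \eqref{it:hgamIsExtremeCompact} and produces $\hY$.

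For the strict decrease, the key observation is that $\fK_{A,\hY}$ naturally sits inside $\fK_{A,X}$ in an appropriate sense: any direction in which $\hY$ can be dilated restricts to a direction in which $X$ can be dilated, since $X$ is a compression of $\hY$. Concretely, if $\eta \in \fK_{A,\hY} \subseteq M_{n+1,1}(\RR)^g$ then writing $\eta = \begin{bmatrix} \eta' \\ \ast \end{bmatrix}$ with $\eta' \in M_{n,1}(\RR)^g$, one checks via Theorem \ref{theorem:Equations} and the block structure of $L_A(\hY)$ that $\eta' \in \fK_{A,X}$, and this assignment is linear and injective (injectivity because $\ker L_A(\hY)$ is spanned by vectors whose restrictions to the first $n$ coordinates span $\ker L_A(X)$, as $\hbeta \in \fK_{A,X}$). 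Thus $\dim \fK_{A,\hY} \le \dim \fK_{A,X}$. To get \emph{strict} inequality, I would exhibit an element of $\fK_{A,X}$ not in the image: the direction $\hbeta$ itself is the natural candidate. One uses condition \eqref{it:1isMaximizerCompact} (maximality of $\alpha = 1$) together with condition \eqref{it:hgamIsExtremeCompact} (extremality of $\hpsi$ in the spectrahedron $\gengam{X}{\hbeta}$) to argue that $\hY$ admits \emph{no} further dilation in the direction obtained by extending $\hbeta$; if it did, one could either increase $\alpha$ past $1$ or move $\hpsi$ along a line segment inside $\gengam{X}{\hbeta}$, contradicting one of the two conditions. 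Hence $\hbeta$ lies in $\fK_{A,X}$ but corresponds to no element of $\fK_{A,\hY}$, forcing the dimension drop.

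The main obstacle I anticipate is the strict-inequality step, specifically the precise bookkeeping that translates extremality of $\hpsi$ in $\gengam{X}{\hbeta}$ and maximality of the scalar $1$ into the statement that the $\hbeta$-direction is ``used up.'' This requires carefully relating the linear equations of Theorem \ref{theorem:Equations} \eqref{it:ArvEq} applied at $\hY$ to the geometry of the spectrahedron $\gengam{X}{\hbeta}$ and the Schur complement condition $L_A(X) - \Lambda_A(\alpha\hbeta) L_A(\psi)^\dagger \Lambda_A(\alpha\hbeta^*) \succeq 0$ that governs feasibility; in particular one must rule out dilation directions of $\hY$ that are ``diagonal'' (nonzero in the new coordinate) as well as those extending $\hbeta$, and handle the interaction with $\ker L_A(\hpsi)$. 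The other steps — existence via compactness and the classical Krein-Milman theorem, and the easy inclusion $\fK_{A,\hY} \hookrightarrow \fK_{A,X}$ — are comparatively routine.
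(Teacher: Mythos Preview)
Your overall architecture matches the paper's: prove existence by a compactness argument, then for the strict inequality set up the projection $\pi\colon \fK_{A,\hY}\to\fK_{A,X}$ sending $\begin{bmatrix}\eta\\ \sigma\end{bmatrix}\mapsto\eta$, and show it is injective with $\hbeta\notin\operatorname{image}(\pi)$. The existence portion and the fact that $\pi$ lands in $\fK_{A,X}$ (via compressing the $3\times 3$ dilation back to a $2\times 2$ one) are exactly what the paper does.

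The gap is in your justification for \emph{injectivity}. You claim it follows from the structural fact that ``$\ker L_A(\hY)$ is spanned by vectors whose restrictions to the first $n$ coordinates span $\ker L_A(X)$.'' This does not give what you need. An element of $\ker\pi$ is a tuple $(0,\sigma)$ with $\sigma\in\RR^g$ nonzero and $(0,\sigma)\in\fK_{A,\hY}$; ruling this out is a statement about the \emph{second} block of kernel vectors, and nothing about how the first blocks span $\ker L_A(X)$ prevents such $\sigma$ from existing. In fact, injectivity is exactly what condition \eqref{it:hgamIsExtremeCompact} buys you: if $(0,\sigma)\in\fK_{A,\hY}$ with $\sigma\neq 0$, the Schur-complement/LDL$^*$ computation (the paper carries this out explicitly) yields $\hpsi\pm\tilde\alpha\sigma\in\gengam{X}{\hbeta}$ for small $\tilde\alpha>0$, contradicting extremality of $\hpsi$. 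So you have misattributed the roles of the two maximality conditions. The correct split is: condition \eqref{it:hgamIsExtremeCompact} forces injectivity of $\pi$; condition \eqref{it:1isMaximizerCompact} forces $\hbeta\notin\operatorname{image}(\pi)$ (a putative preimage $(\hbeta,\sigma)$ would allow $\alpha=\sqrt{1+\tilde\alpha}>1$). Your final paragraph already flags ``diagonal'' directions as the obstacle; that is precisely the injectivity issue, and it is not bookkeeping but a genuine use of the second maximality condition.
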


\begin{proof}

First note that the existence of maximal $1$-dilations follows from a routine compactness argument. Now, let $\hY$ be a maximal $1$-dilation of $X$ and assume towards a contradiction that 
\[
\dim \mathfrak{K}_{A,\hY} \geq \dim \mathfrak{K}_{A,X}.
\]
Let $\eta \in M_{n,1} (\RR)^g$ and $\sigma' \in \mathbb{R}^g$ be tuples such that
\[
\begin{bmatrix}
    \eta \\
    \sigma' 
\end{bmatrix} \in \mathfrak{K}_{A,\hY}.
\]
By considering the Schur complement as in the proof of Theorem \ref{theorem:Equations}, it follows that there exists a constant $\alpha>0$ such that
\[
\begin{bmatrix}
X & \hat{\beta} & \alpha \eta \\
\hat{\beta}^* &\hpsi & c\sigma' \\
\alpha\eta^* & \alpha\sigma' & 0
\end{bmatrix} \in \cD_A.
\]
Using the matrix convexity of $\cD_A$ it follows that
\[
\begin{bmatrix} 1 & 0 & 0 \\
0 & 0 & 1
\end{bmatrix}
\begin{bmatrix}
X & \hat{\beta} & \alpha \eta \\
\hat{\beta}^* &\hpsi & c\sigma' \\
\alpha\eta^* & \alpha\sigma' & 0
\end{bmatrix} 
\begin{bmatrix} 1 & 0 \\
0 & 0 \\
0 & 1 \end{bmatrix} = \begin{bmatrix}
    X & \alpha \eta \\
    \alpha \eta & 0
\end{bmatrix} \in \cD_A.
\]
From this, we obtain that $\eta \in \mathfrak{K}_{A,X}$. Using this, one can argue that the assumption
\[
\dim \mathfrak{K}_{A,\hY} \geq \dim \mathfrak{K}_{A,X}
\]
implies that there exists a real number $c$ and a tuple $\sigma \in \RR^g$ so that
\begin{equation}
\label{eq:badBetaDiCompact}
L_A \left(\begin{bmatrix}
X & \hat{\beta} & c \hat{\beta} \\
\hat{\beta}^* & \hpsi & \sigma \\
c\hat{\beta}^* & \sigma & 0
\end{bmatrix}\right) \succeq 0
\end{equation}
and such that either $ c \neq 0$ or $\sigma \neq 0$. 

Now, applying the NC LDL$^*$-decomposition (see \cite[Appendix]{EH}) shows that, up to unitary equivalence, inequality \eqref{eq:badBetaDiCompact} holds if and only if $L_A (X) \succeq 0$ and the Schur complements
\begin{equation}
\label{eq:3LDLCompactMiddleCompact}
I_d-c^2 Q \succeq 0
\end{equation}
and
\begin{equation}
\label{eq:3LDLCompact}
L_A (\hpsi)-Q-\left(\Lambda_A(\sigma)-cQ\right)^*\left(I_d-c^2 Q\right)^\dagger \left(\Lambda_A(\sigma)-cQ\right) \succeq 0,
\end{equation}
where 
\begin{equation}
\label{eq:QdefCompact}
Q:= \Lambda_A (\hbeta^*) L_A (X)^\dagger \Lambda_A (\hbeta).
\end{equation}
It follows that
\begin{equation}
\label{eq:gammaSchurCompact}
L_A (\hpsi)-Q \succeq 0
\end{equation}
and
\begin{equation}
\label{eq:3diaKerContainCompact}
\ker  [L_A (\hpsi)-Q] \subseteq \ker [\Lambda_A(\sigma)-c Q].
\end{equation}

Picking $\tilde{\alpha} > 0$ so that $\tilde{\alpha}\|\Lambda_A(\sigma)-c Q\|$ is smaller than the smallest nonzero eigenvalue of $L_A (\hpsi)-Q$ and using inequalities \eqref{eq:gammaSchurCompact} and \eqref{eq:3diaKerContainCompact} guarantees
\[
L_A (\hpsi)-Q \  \pm \ \tilde{\alpha} \left(\Lambda_A(\sigma)-c Q\right) \succeq 0.
\]
It follows from the above that
\begin{equation}
\label{eq:betterBetaSchurCompact}
\begin{array}{rllcl}
&L_A (\hpsi \pm \tilde{\alpha} \sigma)- (1 \pm c\tilde{\alpha} )Q\\
=& L_A (\hpsi \pm \alpha \sigma)-\left(\Lambda_A (\sqrt{1 \pm c\tilde{\alpha} }\hat{\beta}^*)L_A(X)^\dagger\Lambda_A(\sqrt{1 \pm c\tilde{\alpha} }\hat{\beta})\right)& \succeq & 0.
\end{array}
\end{equation}
Since $L_A(X) \succeq 0$, equation \eqref{eq:betterBetaSchurCompact} implies (by checking the Schur complement) that
\begin{equation}
\label{eq:betterBetaDiCompact}
L_A \left(\begin{bmatrix}
X & \sqrt{1 \pm c\tilde{\alpha} }\hat{\beta} \\
\sqrt{1 \pm c\tilde{\alpha} }\hat{\beta}^* & \hpsi\pm \tilde{\alpha}  \sigma
\end{bmatrix}\right) \succeq 0.
\end{equation}
Recalling that $\hY$ is a maximal $1$-dilation of $X$, we must have 
\[
\sqrt{1 \pm c\tilde{\alpha} }  \leq 1.
\]
This implies that $c\tilde{\alpha} =0$. Since $\tilde{\alpha} > 0$, it follows that $c = 0$. From our construction, this in turn implies that $\sigma \neq 0$. But then equation \eqref{eq:betterBetaDiCompact} implies that 
\[
\hpsi\pm \tilde{\alpha}\sigma \in \gengam{X}{\hbeta},
\] 
which contradicts the fact that $\hpsi$ is an extreme point of the convex set $\gengam{X}{\hbeta}$. We conclude that
$\dim \mathfrak{K}_{A,\hY} < \dim \mathfrak{K}_{A,X},$
as claimed.
\end{proof}

The desired spanning result now follows easily: if a point $X$ is not already a direct sum of free extreme points, then it may be successively dilated in order to reduce the dimension of the dilation subspace. Once this dimension reaches zero, the final dilation is the direct sum of free extreme points, and $X$ is in the matrix convex hull of those summands.

\subsection{Contrasts with the classical setting}

Any noncommutative polynomial with matrix coefficients, not just a linear pencil, can receive matrix inputs via the tensor product. However, linear matrix inequalities have the same power in expressing matrix convex sets as general polynomial inequalities, in stark contrast with the classical case (see \cite{Bra11}).

\begin{theorem}\cite[Theorem 1.4]{HM12}
\label{th:HM12}
    Let $p$ be a noncommutative polynomial in $g$ self-adjoint variables such that $p(0) \succ 0$. Define $\cD_p \subset SM(\FF)^g$ to be the principal component of the set
    \[
    \{X \in SM(\FF)^g : p(X) \succeq 0\}.
    \]
    Assume that $\cD_p(n)$ is convex for each $n$. Then $\cD_p$ is matrix convex, and there exists a monic linear pencil $L_A$ such that $\cD_p = \cD_A.$ That is, $\cD_p$ is a free spectrahedron.
\end{theorem}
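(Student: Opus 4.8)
The plan is to follow Helton--McCullough: after normalizing, reduce the theorem to the assertion that $\cD_p$ is \emph{matrix} convex, extract that from convexity at every level by a noncommutative Hessian computation, and then read off a monic linear pencil from the first- and second-order behavior of $p$ along the boundary. Replacing $p$ by $p(0)^{-1/2}\,p(\,\cdot\,)\,p(0)^{-1/2}$, we may assume $p(0)=I$. Several facts are then immediate: each level $\cD_p(n)$ is closed; $\cD_p(1)$ contains a ball about $0$; and $\cD_p$ is closed under direct sums, because $p(X\oplus Y)=p(X)\oplus p(Y)\succeq 0$ and $X\oplus Y$ is joined to $0$ inside $\{p\succeq 0\}$ through $0\oplus Y$ (using convexity of the individual levels along the connecting segments). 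Thus the one missing ingredient for matrix convexity --- and, as it turns out, the engine of the whole proof --- is closure under conjugation by isometries.

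The heart of the argument is the noncommutative Hessian $p''(x)[h]$, the degree-two-in-$h$ part of $p(x+h)$. Fix a border-vector/middle-matrix factorization $p''(x)[h]=V(x)[h]^{*}\,M(x)\,V(x)[h]$, where $V(x)[h]$ is a vector of nc polynomials linear in $h$ and $M$ is a symmetric matrix-valued nc polynomial (as in Camino--Helton--Skelton--Ye and Helton--McCullough--Putinar). The crucial step is to show that convexity of \emph{every} $\cD_p(n)$ forces $M$ to be negative semidefinite relative to $p$ on $\cD_p$: for $X\in\cD_p$ and $v\in\ker p(X)$, the compression of $M(X)$ to the span of the relevant border vectors is $\preceq 0$. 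The dimension-free hypothesis is essential here: one probes $M$ by substituting test tuples $X$ of arbitrarily large size together with well-chosen directions $H$, so that second-derivative convexity inequalities at high levels assemble into a pointwise bound on $M$ over all of $\cD_p$; convexity at level $1$ alone is far too weak to see this. Once $M\preceq 0$ is known on $\cD_p$, one extracts from the first- and second-order data of $p$ at each smooth boundary point $\xi$ of $\cD_p(1)$ a monic linear pencil $L_{A^{\xi}}$ satisfying $L_{A^{\xi}}\succeq 0$ on all of $\cD_p$ and $L_{A^{\xi}}(\xi)$ singular. In particular $\cD_p\subseteq\cD_{A^{\xi}}$, and the existence of enough such supporting pencils is precisely closure under isometric conjugation.

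It remains to pass to a single pencil. Since $\partial\cD_p(1)$ lies in the real algebraic hypersurface $\{\det p=0\}$, a Noetherian/finiteness argument lets one pick finitely many supporting pencils $L_{A^{1}},\dots,L_{A^{k}}$ whose tightness loci together cover $\partial\cD_p$; put $A:=A^{1}\oplus\cdots\oplus A^{k}$, so that $L_A=L_{A^{1}}\oplus\cdots\oplus L_{A^{k}}$ and $\cD_p\subseteq\cD_A$. Conversely, if $X\in\cD_A\setminus\cD_p$, the segment from $0$ to $X$ exits $\cD_p$ at a boundary point $Y$; some $L_{A^{i}}$ is tight at $Y$ and, being affine, its restriction to that segment becomes indefinite just past $Y$, contradicting $X\in\cD_{A^{i}}$. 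Hence $\cD_p=\cD_A$ is a free spectrahedron (and matrix convexity follows a posteriori).

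The main obstacle is the middle step: turning ``each $\cD_p(n)$ is convex'' into the sharp statement ``$M\preceq 0$ on $\cD_p$'' --- choosing the right families of growing test tuples, and dealing with non-smooth boundary points and, when $\cD_p$ is unbounded, with behavior near infinity --- and then converting semidefiniteness of the matrix nc polynomial $M$ into honest monic linear pencil certificates. This is the technical core of the theorem. A secondary difficulty is the reduction from infinitely many supporting LMIs to one, which rests on the polynomial (hence Noetherian) nature of the boundary.
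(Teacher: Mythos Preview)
The paper does not prove this theorem: it is stated as a citation of \cite[Theorem 1.4]{HM12} and no argument is given here, so there is nothing in the present paper to compare your proposal against. Your sketch is in the spirit of the Helton--McCullough approach (normalize $p(0)=I$, analyze the noncommutative Hessian via a border-vector/middle-matrix factorization, use convexity at all levels to force a sign condition, then produce supporting linear pencils), and as a roadmap it is reasonable.

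That said, two points in your outline do not match the actual \cite{HM12} argument and would need work. First, the passage from infinitely many supporting pencils $L_{A^\xi}$ (indexed by smooth scalar boundary points) to a single finite pencil is not handled in \cite{HM12} by a Noetherian covering of $\partial\cD_p(1)$; rather, the size of the defining tuple $A$ is bounded a priori in terms of the degree of $p$, via the representation-theoretic machinery (the CHSY lemma and the structure of the middle matrix). Your covering argument, as written, only controls the scalar boundary and does not obviously yield tightness at higher-level boundary points $Y\in\partial\cD_p(n)$, so the contradiction step ``some $L_{A^i}$ is tight at $Y$'' is unjustified. Second, the precise statement one extracts is not simply ``$M\preceq 0$ on $\cD_p$'' but a positivity statement for a \emph{relaxed} Hessian that incorporates the kernel of $p$ at boundary points; the construction of the monic pencil then goes through a Positivstellensatz-type certificate rather than a pointwise second-fundamental-form reading at each $\xi$. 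These are the places where your sketch would need to be replaced by the actual mechanics of \cite{HM12}.
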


We emphasize that there are two surprising aspects of the above theorem. First, every matrix convex set defined by a noncommutative polynomial inequality can be defined by a linear matrix inequality. Second, for dimension-free sets defined by noncommutative polynomials, levelwise convexity is sufficient for matrix convexity.  See also \cite[Theorem 4.2]{Kr} for further discussion. 

From Theorem \ref{th:HM12}, a free semialgebraic matrix convex set is a free spectrahedron. Classically, the Tarski principle states that a projection of a semialgebraic set is again semialgebraic. Surprisingly, this corner-stone of real semialgebraic geometry does not extend to the free setting. To show this, we first formally introduce free spectrahedrops, i.e., projections of free spectrahedra. 

Let $A \in SM_m (\FF)^g$ and $B \in SM_m(\FF)^h$ be tuples of self-adjoint matrices. Given $(X,Y) \in SM_n (\FF)^{g+h}$ where $X=(X_1,\dots,X_g)$ and $Y=(Y_1,\dots,Y_h)$, define
\[
L_{(A,B)} (X,Y) := I_m \otimes I_n - A_1 \otimes X_1 - \dots -A_g \otimes X_g - B_1 \otimes Y_1 - \dots - B_h \otimes Y_h.
\]
The free spectrahedrop $\dropAB$ is the coordinate projection onto the $X$ variables in the free spectrahedron $\cD_{(A,B)}$. That is,
\[
\dropAB := \{ X \in SM(\FF)^g \ : \,\exists \, Y \in SM(\FF)^h \ \mathrm{s.t.} \ L_{(A,B)}(X,Y) \succeq 0 \}. 
\]
It is straightforward to verify that free spectrahedrops are matrix convex. However, not every free spectrahedrop is a free spectrahedron.

\begin{theorem}\cite[Proposition 9.8]{HM12}
There exists a free spectrahedrop that is not a free spectrahedron. 
\end{theorem}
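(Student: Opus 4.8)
The plan is to take the smallest matrix convex set over a non-simplicial polytope and show it is a spectrahedrop but not a spectrahedron. Fix $\FF\in\{\RR,\C\}$, set $g=2$, let $P=[-1,1]^2\subseteq\RR^2$, and list its vertices as $v^{(1)},\dots,v^{(4)}\in\{\pm1\}^2$. Let $A\in SM_4(\FF)^2$ be the commuting pair whose $j$-th entry is $\diag\big((v^{(1)})_j,\dots,(v^{(4)})_j\big)$, and put $\mathcal{D}:=\operatorname{mconv}(A)$. Then $\mathcal{D}(1)=\conv\{v^{(i)}\}=P$, and since $A$ is a tuple of matrices $\mathcal{D}=\overline{\operatorname{mconv}}(A)=\mathcal{W}(A)$ is compact; it is bounded, has $0$ in the interior of its first level, and is the minimal matrix convex set over $P$, which I write $\mathcal{W}^{\min}(P)$. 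The three steps are: (1) $\mathcal{D}$ is a free spectrahedrop; (2) if $\mathcal{D}$ were a free spectrahedron it would equal the matrix bicube; (3) a point of the matrix bicube at level $2$ is not in $\mathcal{D}$.

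For (1): the operator system $\mathcal{S}_A$ is commutative and $C^*(\mathcal{S}_A)\cong\C^4$ (functions on the four vertices), so by Arveson's extension theorem \cite{A1} every UCP map $\mathcal{S}_A\to M_n(\C)$ restricts a UCP map $\C^4\to M_n(\C)$, i.e. a map $e_i\mapsto C_i$ with $C_i\succeq0$, $\sum_iC_i=I_n$. Hence $\mathcal{D}(n)=\mathcal{W}(A)(n)$ is exactly the set of pairs $\big(\sum_i(v^{(i)})_1C_i,\ \sum_i(v^{(i)})_2C_i\big)$ over such $C_i$. Eliminating the constraint $\sum_iC_i=I_n$, substituting out all but one $C_i$, and translating the remaining self-adjoint slack so that the resulting diagonal pencil is monic rewrites $\mathcal{D}$ as a coordinate projection $\dropAB$; thus $\mathcal{D}$ is a free spectrahedrop.

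For (2): suppose $\mathcal{D}=\cD_C$. Then $\cD_C(1)=P$ is a polytope, so its minimal defining pencil is, up to unitary equivalence and redundant summands, the diagonal pencil of the facet inequalities $1\pm x_1$, $1\pm x_2$; by uniqueness of the minimal pencil of a bounded free spectrahedron \cite[Theorem 1.2]{HKM13}, \cite[Theorem 1.2]{Zal17} this forces $\cD_C=\mathcal{W}^{\max}(P)=\{(X_1,X_2)\in SM(\FF)^2:-I\preceq X_i\preceq I\}$, the matrix bicube. For (3): take $X_1=\diag(1,-1)$ and $X_2=\left[\begin{smallmatrix}0&1\\1&0\end{smallmatrix}\right]$, both of norm $1$, so $(X_1,X_2)\in\mathcal{W}^{\max}(P)(2)$. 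If $(X_1,X_2)\in\mathcal{D}(2)$, then by (1) there are $C_\epsilon\succeq0$ with $\sum_\epsilon C_\epsilon=I_2$, $X_1+X_2=2(C_{(1,1)}-C_{(-1,-1)})$ and $X_1-X_2=2(C_{(1,-1)}-C_{(-1,1)})$. Applying $\operatorname{tr}(R+S)\ge\|R-S\|_1$ for $R,S\succeq0$ (which follows from $R+S\succeq\pm(R-S)$) to each pair,
\[
2=\operatorname{tr}(I_2)=\operatorname{tr}\Big(\sum_\epsilon C_\epsilon\Big)\ \ge\ \tfrac12\big(\|X_1+X_2\|_1+\|X_1-X_2\|_1\big)=2\sqrt2,
\]
a contradiction; equivalently this is the failure $\mathcal{W}^{\min}(P)\subsetneq\mathcal{W}^{\max}(P)$ for $P$ not a simplex (cf. Example \ref{example:Simplex}, \cite[Theorem 4.7]{FNT}). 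So $\mathcal{D}$ is not a free spectrahedron.

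The only genuinely external input is step (2)—that a bounded free spectrahedron is pinned down by its first level, resting on uniqueness of minimal LMI representations—and this I expect to be the main obstacle: without it one would have to rule out directly every free spectrahedron over $P$ that is strictly thinner than $\mathcal{W}^{\max}(P)$ above level one. The rest is short: step (3) is the computation above, and step (1) only requires the mild bookkeeping of producing an honestly monic auxiliary pencil so that $\mathcal{D}$ matches the definition of a spectrahedrop verbatim.
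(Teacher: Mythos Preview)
Your step (2) contains a genuine error. You claim that if $\cD_C(1)=P$ is the square, then uniqueness of the minimal pencil forces $\cD_C=\mathcal{W}^{\max}(P)=\mathcal{C}_2$. But the uniqueness theorems you cite (\cite[Theorem 1.2]{HKM13}, \cite[Theorem 1.2]{Zal17}) say that a bounded \emph{free} spectrahedron determines its minimal pencil; they do \emph{not} say that the level-one spectrahedron $\cD_C(1)$ determines it. In fact your intermediate claim is false: there exist bounded free spectrahedra with first level equal to $[-1,1]^2$ that are strictly smaller than $\mathcal{C}_2$. Take $D$ to be the diagonal facet pencil for the square and set $B=\tfrac{1}{\sqrt 2}\big(\begin{smallmatrix}1&0\\0&-1\end{smallmatrix},\begin{smallmatrix}0&1\\1&0\end{smallmatrix}\big)$, so $\cD_B(1)$ is the disk of radius $\sqrt 2\supseteq[-1,1]^2$. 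With $C=D\oplus B$ one has $\cD_C(1)=[-1,1]^2$, yet for $X=\big(\begin{smallmatrix}1&0\\0&-1\end{smallmatrix},\begin{smallmatrix}0&1\\1&0\end{smallmatrix}\big)\in\mathcal C_2(2)$ the eigenvalues of $\sigma_z\otimes\sigma_z+\sigma_x\otimes\sigma_x$ are $\{2,0,0,-2\}$, so $L_B(X)$ has a negative eigenvalue $1-\sqrt 2$ and $X\notin\cD_C$. Thus $\cD_C\subsetneq\mathcal C_2$, and your deduction in (2) collapses. You yourself flag (2) as the crux, and indeed it is: the obstacle you describe---ruling out every free spectrahedron over $P$ thinner than $\mathcal C_2$---is real and not handled by minimal-pencil uniqueness.

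Your example $\mathcal{D}=\mathcal{W}^{\min}([-1,1]^2)$ does ultimately work, but proving it is not a free spectrahedron requires a different mechanism. The paper's alternative argument sidesteps the difficulty entirely by choosing an \emph{irreducible} tuple $A\in SM_m(\RR)^g$ with $m\ge 2$: then $\operatorname{mconv}(A)$ is a bounded free spectrahedrop (via polar duality, \cite[Theorem 1.2]{EHKM} and \cite[Theorem 4.11]{HKM17}) whose only free extreme point, up to unitary equivalence, is $A$ itself at level $m$, so there are no free extreme points at level one. Corollary~\ref{cor:SpecLevel1Extreme} then immediately rules out $\operatorname{mconv}(A)$ being a free spectrahedron. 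Your reducible diagonal $A$ loses exactly this leverage, because the four vertices \emph{are} level-one free extreme points of $\mathcal{W}^{\min}(P)$. If you want to rescue your specific example, one route is to pass to the polar dual: if $\mathcal{W}^{\min}(P)=\cD_C$ then $\operatorname{mconv}(C)=\cD_C^{\circ}=\cD_A$, which is (a rotated copy of) the free bicube $\mathcal C_2$ and hence has infinitely many non--unitarily-equivalent free extreme points at level two, while $\operatorname{mconv}(C)$ can have only finitely many (the irreducible summands of $C$).
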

\begin{proof}
The original proof of this result in \cite{HM12} first argues that any matrix convex set with level one equal to
\[
\mathcal{C}=\{(x_1,x_2): 1-x_1^4-x_2^4 \geq 0\}
\]
cannot be a free spectrahedron. In fact, $\mathcal{C}$ itself is not a spectrahedron since it fails the line test \cite{HV}. On the other hand, \cite{HM12} provides an explicit construction of a free spectrahedrop $K$ such that $K(1)=\mathcal{C}$.

 An alternative proof that highlights the differences between free extreme points of free spectrahedra and free spectrahedrops can be obtained by considering the free polar dual of a free spectrahedron. Given a matrix convex set $K \subseteq SM(\FF)^g$, the free polar dual of $K$, denoted $K^\circ,$ is defined by 
\[
K^\circ:= \{Y \in SM(\FF)^g \ : \ L_X (Y) \succeq 0 \mathrm{ \ for \ all \ } X \in K\}.
\]
Take $A$ to be an irreducible tuple in $SM_m (\RR)^g$ for $m \geq 2$ such that $\cD_A$ is a bounded free spectrahedron. Using \cite[Theorem 1.2]{EHKM}, the free polar dual of a bounded free spectrahedron $\cD_A$ is precisely mconv$(A)$. Furthermore, \cite[Theorem 4.11]{HKM17} shows that the polar dual of a bounded free spectrahedrop is a bounded free spectrahedrop, hence mconv$(A)$ is a bounded free spectrahedrop. Since $A$ is irreducible, it is straightforward to show that up to unitary equivalence, the only free extreme point of mconv$(A)$ is $A$, hence mconv$(A)$ does not have free extreme points at level $1$. On the other hand, mconv$(A)$ is closed under complex conjugation since $A$ has real entries. Using Corollary \ref{cor:SpecLevel1Extreme} we see that mconv$(A)$ cannot be a free spectrahedron. 
\end{proof}

We have seen from Theorem \ref{thm:dropspan} that real free spectrahedra are the matrix convex hulls of their free extreme points. This implies that the corresponding operator systems are completely normed by their finite-dimensional boundary representations. However, infinite-dimensional boundary representations of those operator systems may exist. See \cite[Proposition 7.1]{EHKM}, \cite[Proposition 5.5]{FKPT14}, and \cite[Proposition 7.9 and Example 7.10]{Kr} for information about the matrix convex set in the following example. Also, see \cite[Corollary 10.13 and Remark 11.6]{Kav14}, \cite[Corollary 3.9 and Corollary 3.11]{PP21}, and \cite[Theorem 6.7]{Pas} for how this and similar examples fit into the context of approximation theorems.

\begin{example} \cite[\S 5]{FKPT14}
Consider the set of tuples of self-adjoint contractions, also known as the free cube: 
\[ \mathcal{C}_g = \{X \in SM(\mathbb{C})^g: \,\,\, \forall j, \, \, -I \preceq X_j \preceq I\}. \]
This is a bounded free spectrahedron that is also closed under entrywise complex conjugation. Therefore, $\mathcal{C}_g$ is the matrix convex hull of its free extreme points by Theorem \ref{thm:dropspan}. The free extreme points of $\mathcal{C}_g$ are precisely the tuples of self-adjoint unitaries, which are also the classical extreme points. A more direct dilation addresses the spanning problem in this case: dilate each $X_j$ to the self-adjoint unitary 
\[\begin{bmatrix} X_j & \sqrt{I - X_j^2} \\ \sqrt{I - X_j^2} & -X_j \end{bmatrix}\] 
as in \cite{Hal50}. The corresponding operator system is spanned by universal self-adjoint unitaries, and therefore it sits inside the full group $C^*$-algebra of $(\mathbb{Z}/2\mathbb{Z})^{*g}$, where $*$ denotes the free product of groups. In dimension $g = 2$, all irreducible pairs of self-adjoint unitaries are of matrix dimension $1$ or $2$. However, for $g \geq 3$, the free product is nonamenable and there are irreducible tuples of self-adjoint unitaries of any dimension, including infinite dimension. In particular, for $g \geq 3$ there are infinite-dimensional boundary representations of the operator system, even though the finite-dimensional ones (i.e., the free extreme points) are sufficient.
\end{example}

Finally, the spanning result Theorem \ref{thm:dropspan} does not extend to all complex free spectrahedra, particularly those that are not closed under complex conjugation.

\begin{example}\label{example:Cuntz} \cite[Example 6.30]{Kr}
Let $\mathcal{O}_d$ be the Cuntz algebra \cite{Cun77}, $d \geq 2$. This is the $C^*$-algebra generated by bounded operators $\mathscr{T}_1, \mathscr{T}_2, \ldots, \mathscr{T}_d$ on a separable Hilbert space such that each $\mathscr{T}_i$ is an isometry, that is,
\[ \forall i \in \{1, \ldots, d\}, \,\, \mathscr{T}_i^*\mathscr{T}_i = I,\]
and the additional condition
\[ \mathscr{T}_1\mathscr{T}_1^* + \mathscr{T}_2\mathscr{T}_2^* + \ldots + \mathscr{T}_d \mathscr{T}_d^* = I\]
holds. In particular, these constraints imply that the ranges of the isometries $\mathscr{T}_i$ are pairwise orthogonal. If $\mathscr{T} = (\mathscr{T}_1, \ldots, \mathscr{T}_d)$, then $\mathcal{W}(\mathscr{T})$ has no free extreme points. Indeed, every free extreme point $A$ of $\mathcal{W}(\mathscr{T})$ would be the image of $\mathscr{T}$ under a (finite-dimensional) boundary representation, and no representations $\mathcal{O}_d \to M_n(\C)$ exist. One way to see this is that every matrix isometry is automatically a unitary, but in fact, a much stronger claim is true: the Cuntz algebra is a simple $C^*$-algebra (of infinite dimension), so all of its representations are injective. Note also that because the $C^*$-envelope of $\mathcal{S}_T$ is realized as a quotient of $C^*(\mathcal{S}_T) = \mathcal{O}_d$, but $\mathcal{O}_d$ is simple, it follows that $\mathcal{O}_d$ itself is the $C^*$-envelope!
\end{example}

As in \cite[Theorem 3.8]{Zh14}, the matrix range of the Cuntz isometries above is simply the set of row contractions (a consequence of the dilation results in \cite{Pop89}), which is the set of matrix tuples $(T_1, \ldots, T_d)$ satisfying $\sum\limits_{j=1}^d T_j T_j^* \preceq I$.
When the matrices $T_j$ are put into self-adjoint coordinates $T_j = X_j + iY_j$, the corresponding set of self-adjoint tuples $(X_1, Y_1, \ldots, X_d, Y_d)$ with $\sum\limits_{j=1}^d (X_j + iY_j)(X_j - iY_j) \preceq I$ is a bounded free spectrahedron. In fact, it is a spectraball, as in \cite[\S 1]{HKMV20}. However, that free spectrahedron in $2d$ self-adjoint variables is not closed under simultaneous entrywise complex conjugation 
\[ X_j \mapsto \overline{X_j}, \hspace{.6 cm} Y_j \mapsto \overline{Y_j}.\]
For instance, as in the text after \cite[Remark 2.5]{Pas22}, the row contraction $(E_{12}, E_{22})$ splits into the self-adjoint tuple
\[ (X_1, Y_1, X_2, Y_2) = \left( \begin{bmatrix} 0 & 1/2 \\ 1/2 & 0 \end{bmatrix}, \begin{bmatrix} 0 & -i/2 \\ i/2 & 0 \end{bmatrix}, \begin{bmatrix} 0 & 0 \\ 0 & 1 \end{bmatrix}, \begin{bmatrix} 0 & 0 \\ 0 & 0 \end{bmatrix}   \right)\]
with 
\[ (X_1 + iY_1)(X_1 - iY_1) + (X_2 + iY_2)(X_2 - iY_2) \,\, = \,\, E_{12}E_{12}^* + E_{22}E_{22}^* \,\, = \,\, I,\] 
but the tuple $(\overline{X_1}, \overline{Y_1}, \overline{X_2}, \overline{Y_2})$ of complex conjugates no longer meets the condition needed:
\[ (\overline{X_1} + i\overline{Y_1})(\overline{X_1} - i\overline{Y_1}) + (\overline{X_2} + i\overline{Y_2})(\overline{X_2} - i\overline{Y_2}) \,\, = \,\, E_{21}E_{21}^* + E_{22}E_{22}^* \,\, = \,\, 2 E_{22} \,\, \npreceq I. \]
So, this example does not fall under Theorem \ref{thm:dropspan}. In contrast, the self-adjoint matrix ball 
\[ \mathcal{B}_g = \{X \in SM(\C)^g: \, X_1^2 + \ldots + X_g^2 \preceq I\}\]
is the set of self-adjoint row contractions, and this \textit{is} closed under entrywise complex conjugation. So, the self-adjoint matrix ball \textit{does} fall under Theorem \ref{thm:dropspan}. Combining these two extremes provides examples of a different spanning phenomenon.

\begin{definition}\cite[Definition 2.6]{Pas22}
Let $d \geq 1$ and $g \geq 0$. Define $\mathcal{M}_{d, g}$ as the set of tuples $(T_1, \ldots, T_d, X_1, \ldots, X_g)$ of square complex matrices such that each $X_j$ is self-adjoint and $\sum\limits_{i=1}^d T_iT_i^* + \sum\limits_{j=1}^g X_j^2 \preceq I$.
\end{definition}

The matrix convex set $\mathcal{M}_{d,g}$ can be shown to be a free spectrahedron (after it is placed into self-adjoint coordinates, meaning there are $2d+g$ self-adjoint variables) by a similar computation as for the row contractions or matrix ball; see the discussion after \cite[Definition 2.6]{Pas22} for the linear matrix inequality formulation. Further, as a closed and bounded matrix convex set, $\mathcal{M}_{d,g}$ is also the matrix range of some operator tuple $(\mathcal{T}, \mathcal{X})$. For example, one may take the direct sum of matrices chosen from a dense subset of each level. In this case, a tuple of \textit{operators} (not necessarily matrices) $(T, X)$ is the UCP image of $(\mathcal{T}, \mathcal{X})$ if and only if
\begin{equation}\label{eq:rowpsa} X_j = X_j^* \,\,\,\, \text{ and } \,\,\,\, \sum\limits_{i=1}^d T_i T_i^* + \sum\limits_{j=1}^g X_j^2 \preceq I.\end{equation}
In general, the choice of \lq\lq universal\rq\rq\hspace{0pt} tuple $(\mathcal{T}, \mathcal{X})$ is immaterial when discussing only the operator system or matrix convex set itself. However, since the case $d \geq 2$, $g = 0$ returns to the setting of row contractions, in that case the most natural choice is a tuple of Cuntz isometries. In general, $\mathcal{M}_{d, g}$ for $g \geq 1$ will not have a simple $C^*$-envelope, as there are too many boundary representations.

\begin{theorem}\label{theorem:m1gmaximal} \cite[Theorem 2.8]{Pas22}
Let $d \geq 1$ and $g \geq 0$, and let $(T, X)$ be a tuple of operators satisfying $(\ref{eq:rowpsa})$. Then $(T, X)$ admits no nontrivial dilations that satisfy $(\ref{eq:rowpsa})$ if and only if each $T_i$ is injective, the ranges of the $T_i$ are linearly independent subspaces (trivially satisfied if $d = 1$), and $\sum\limits_{i=1}^d T_iT_i^* + \sum\limits_{j=1}^g X_j^2 = I$.
\end{theorem}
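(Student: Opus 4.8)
The plan is to reduce everything to a block–matrix analysis of dilations, with the leverage coming from positivity of the $(1,1)$-corner of $\sum_i \tilde T_i\tilde T_i^* + \sum_j \tilde X_j^2$. It is convenient to package $(\ref{eq:rowpsa})$ by writing $R = \begin{bmatrix} T_1 & \cdots & T_d & X_1 & \cdots & X_g\end{bmatrix}\colon H^{d+g}\to H$, so that $(T,X)$ satisfies $(\ref{eq:rowpsa})$ exactly when each $X_j$ is self-adjoint and $RR^*\preceq I$, and $\sum_i T_iT_i^* + \sum_j X_j^2 = I$ says precisely that $R$ is a coisometry. I would also record the elementary fact that the two conditions ``each $T_i$ is injective'' and ``the ranges of the $T_i$ are linearly independent'' together are equivalent to injectivity of the single row operator $\begin{bmatrix} T_1 & \cdots & T_d\end{bmatrix}\colon H^d\to H$; this is the form in which the hypothesis is actually used.

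For the ``if'' direction, let $(\tilde T,\tilde X)$ be any dilation of $(T,X)$ satisfying $(\ref{eq:rowpsa})$; after a unitary equivalence we may take $\tilde H = H\oplus H'$ with $H$ a subspace, and write $\tilde T_i = \begin{bmatrix} T_i & B_i \\ C_i & D_i\end{bmatrix}$ and $\tilde X_j = \begin{bmatrix} X_j & Y_j \\ Y_j^* & Z_j\end{bmatrix}$ with $Z_j$ self-adjoint. The $(1,1)$-block of $\sum_i \tilde T_i\tilde T_i^* + \sum_j \tilde X_j^2$ equals $RR^* + \sum_i B_iB_i^* + \sum_j Y_jY_j^* = I_H + \sum_i B_iB_i^* + \sum_j Y_jY_j^*$, and the constraint $\sum_i \tilde T_i\tilde T_i^* + \sum_j \tilde X_j^2 \preceq I_{\tilde H}$ forces this corner to be $\preceq I_H$, hence $B_i = 0$ and $Y_j = 0$ for every $i,j$. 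Now $\tilde X_j = X_j\oplus Z_j$ is already a direct sum; moreover the $(1,1)$-corner is now exactly $I_H$ and the off-diagonal block equals $\sum_i T_iC_i^*$, so positivity of $I_{\tilde H} - \sum_i \tilde T_i\tilde T_i^* - \sum_j \tilde X_j^2$ together with the vanishing of its $(1,1)$-corner forces $\sum_i T_iC_i^* = 0$. Applying this to vectors of $H'$ and invoking injectivity of $\begin{bmatrix} T_1 & \cdots & T_d\end{bmatrix}$ gives $C_i = 0$ for all $i$, so $\tilde T_i = T_i\oplus D_i$ and the dilation is trivial.

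For the ``only if'' direction I would argue by contrapositive and build an explicit one-dimensional nontrivial dilation in each failing case. If $R$ is not a coisometry, set $\Delta = I_H - RR^*\succeq 0$, pick a unit vector $\xi$ with $b := \Delta^{1/2}\xi\neq 0$, and on $\tilde H = H\oplus\mathbb{C}$ take $\tilde T_1 = \begin{bmatrix} T_1 & b \\ 0 & 0\end{bmatrix}$ with all other coordinates trivial; then $\sum_i \tilde T_i\tilde T_i^* + \sum_j\tilde X_j^2 = (RR^* + bb^*)\oplus 0\preceq I_{\tilde H}$ because $bb^*\preceq\Delta$, while $b\neq 0$ prevents $H$ from reducing $\tilde T_1$, so the dilation is nontrivial. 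If instead $\begin{bmatrix} T_1 & \cdots & T_d\end{bmatrix}$ is not injective, choose $h = (h_1,\dots,h_d)\in H^d$ with $\|h\| = 1$, $\sum_i T_ih_i = 0$, and some $h_{i_0}\neq 0$, and on $\tilde H = H\oplus\mathbb{C}$ take $\tilde T_i = \begin{bmatrix} T_i & 0 \\ h_i^* & 0\end{bmatrix}$ for each $i$ with the $X_j$ left trivial; then $\sum_i \tilde T_i\tilde T_i^* + \sum_j\tilde X_j^2 = RR^*\oplus\|h\|^2\preceq I_{\tilde H}$, and $h_{i_0}\neq 0$ shows that $H$ is not even invariant under $\tilde T_{i_0}$, so again the dilation is nontrivial.

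The real content, and the step I expect to be the main obstacle, is the two-stage corner argument in the ``if'' direction: one must first read off $B_i = Y_j = 0$ from the inequality that the $(1,1)$-corner is $\preceq I_H$, and only afterwards---once the slack in $(\ref{eq:rowpsa})$ has been entirely consumed and the corner is exactly $I_H$---can one kill the co-extension parts $C_i$, and that final step is precisely where the injectivity hypothesis (equivalently, injectivity of each $T_i$ plus independence of their ranges) is essential; dropping either half of it is what produces the two families of counterexamples in the ``only if'' direction. The remaining work is routine bookkeeping: tracking self-adjointness of the $\tilde X_j$ throughout, and checking that the two explicit dilations above genuinely fail to be direct sums---equivalently, that $H$ fails to reduce them---rather than being trivial dilations in disguise.
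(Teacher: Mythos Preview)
The paper does not include its own proof of this theorem; it is quoted from \cite{Pas22} and only used as a black box in the subsequent corollary. Your proposal is correct and is essentially the direct block-matrix argument one would expect: in the ``if'' direction, the equality $\sum_i T_iT_i^* + \sum_j X_j^2 = I$ forces the extension blocks $B_i,Y_j$ to vanish via the $(1,1)$-corner, after which the vanishing of that corner in the defect operator kills the off-diagonal $\sum_i T_iC_i^*$, and injectivity of the row $[T_1\ \cdots\ T_d]$ finishes off the co-extension blocks $C_i$. Your repackaging of the two hypotheses on the $T_i$ as injectivity of the single row operator is exactly the right move, and the two explicit one-dimensional dilations in the ``only if'' direction correctly isolate which half of that hypothesis is violated.

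One small point of hygiene: when you conclude ``$H$ not reducing, so the dilation is nontrivial,'' you are using the convention---which the paper also adopts in Corollary~\ref{cor:TrivDilations}---that a trivial dilation is one for which the \emph{given} copy of $H$ is reducing, not merely one that is abstractly unitarily equivalent to some direct sum containing $(T,X)$. With that reading the step is immediate; if one instead took the weaker definition stated earlier in the paper (unitary equivalence to a direct sum), a short extra argument would be needed, but the two notions of maximality coincide and this is standard.
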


If $\mathcal{W}(\mathcal{T}, \mathcal{X}) = \mathcal{M}_{d, g}$, then the images of $(\mathcal{T}, \mathcal{X})$ under boundary representations are precisely the irreducible tuples that satisfy (\ref{eq:rowpsa}) and admit no nontrivial dilations that satisfy (\ref{eq:rowpsa}). So, the theorem determines all boundary representations of the corresponding operator system. In previously considered cases (e.g. $d \geq 2$ and $g = 0$ for Cuntz isometries, or $d = 1$ and $g = 0$ for unitaries), the theorem recovers key examples. However, it also leads to a distinct spanning result.

\begin{corollary} \cite[Corollary 2.10]{Pas22} 
Fix $d = 1$ and $g \geq 1$. Then the free spectrahedron $\mathcal{M}_{1,g}$ is the closed matrix convex hull, but not the matrix convex hull, of its free extreme points. In particular, any tuple in $\mathcal{M}_{1,g}(n)$ is the limit of a sequence of matrix convex combinations of free extreme points, where each combination is of the form $\sum\limits_{i=1}^{m} \gamma_i^* F^i \gamma_i$ with $\sum\limits_{i=1}^m \operatorname{dim}(F^i) \leq 2n$.
\end{corollary}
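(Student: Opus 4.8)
The plan is to handle the three assertions—that $\mathcal{M}_{1,g}=\overline{\operatorname{mconv}}(\operatorname{fext}(\mathcal{M}_{1,g}))$, that it does \emph{not} equal $\operatorname{mconv}(\operatorname{fext}(\mathcal{M}_{1,g}))$, and the $2n$ bound—by combining a single explicit dilation with a scaling perturbation for the positive part, and a concrete witness for the negative part. Write a point of $\mathcal{M}_{1,g}(n)$ as $(T,X)=(T,X_1,\dots,X_g)$ with $T\in M_n(\C)$, each $X_j$ self-adjoint, and $TT^*+\sum_j X_j^2\preceq I$. The computational input is that, by Corollary~\ref{cor:TrivDilations} together with Theorem~\ref{theorem:m1gmaximal} specialized to $d=1$ (where the linear-independence-of-ranges condition is vacuous), a tuple $F=(S,Y_1,\dots,Y_g)\in\mathcal{M}_{1,g}$ is a free extreme point precisely when $F$ is irreducible, $S$ is injective, and $SS^*+\sum_j Y_j^2=I$; since $S$ is then a square injective matrix, it is invertible.

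For the positive statement, first reduce to tuples with $\sum_j X_j^2\prec I$: for $(T,X)\in\mathcal{M}_{1,g}(n)$ and $\delta\in(0,1)$, the tuple $(T,(1-\delta)X)$ still lies in $\mathcal{M}_{1,g}(n)$, satisfies $\sum_j((1-\delta)X_j)^2\preceq(1-\delta)^2 I\prec I$, and converges to $(T,X)$ as $\delta\to 0$, so it suffices to realize each such perturbed tuple as a matrix convex combination of free extreme points of total size $\le 2n$. Assume then $P:=I-\sum_j X_j^2\succ 0$. From $TT^*\preceq P$ we get $\|P^{-1/2}T\|\le 1$, so the contraction $P^{-1/2}T$ is the upper-left corner of a unitary $V\in M_{2n}(\C)$ (Halmos's $2\times2$ dilation of a contraction, cf.\ \cite{Hal50}). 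Set
\[
\widetilde{T}:=\begin{bmatrix}P^{1/2}&0\\0&P^{1/2}\end{bmatrix}V,\qquad
\widetilde{X}_j:=\begin{bmatrix}X_j&0\\0&-X_j\end{bmatrix},
\]
and $\widetilde{M}:=(\widetilde{T},\widetilde{X}_1,\dots,\widetilde{X}_g)$. Then $\widetilde{T}\widetilde{T}^*=\diag(P,P)$ and $\sum_j\widetilde{X}_j^2=\diag(I-P,I-P)$, so $\widetilde{M}\in\mathcal{M}_{1,g}(2n)$ with $\widetilde{T}\widetilde{T}^*+\sum_j\widetilde{X}_j^2=I_{2n}$; moreover $\widetilde{T}$ is a product of invertibles, hence invertible; and the upper-left $n\times n$ compression of $\widetilde{M}$ equals $(P^{1/2}V_{11},X)=(T,X)$. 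Now decompose $\widetilde{M}\sim_u\bigoplus_i F^i$ into irreducible summands. Each $F^i=(S^i,Y^i)$ is an isometric compression of $\widetilde{M}$, hence lies in the matrix convex set $\mathcal{M}_{1,g}$; the block-diagonalizing unitary carries the invertible $\widetilde{T}$ to $\bigoplus_i S^i$ and the identity $\widetilde{T}\widetilde{T}^*+\sum_j\widetilde{X}_j^2$ to $\bigoplus_i\big(S^i(S^i)^*+\sum_j(Y^i_j)^2\big)$, so each $S^i$ is invertible and $S^i(S^i)^*+\sum_j(Y^i_j)^2=I$; by the characterization above each $F^i$ is a free extreme point. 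Writing $\widetilde{M}$ as the trivial matrix convex combination of the $F^i$ through the coordinate isometries and composing with the compression that recovers $(T,X)$ yields $(T,X)=\sum_i\gamma_i^*F^i\gamma_i$ with $\sum_i\dim F^i=\dim\widetilde{M}=2n$. Together with the perturbation, this proves $\mathcal{M}_{1,g}=\overline{\operatorname{mconv}}(\operatorname{fext}(\mathcal{M}_{1,g}))$ with the stated Carathéodory bound.

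For the negative statement, take $W:=(0,1,0,\dots,0)\in\mathcal{M}_{1,g}(1)$, that is, $T=0$ and $X_1=1$ (this uses $g\ge 1$). Suppose $W=\sum_{i=1}^k\gamma_i^*F^i\gamma_i$ is a matrix convex combination of free extreme points $F^i=(S^i,Y^i_1,\dots,Y^i_g)\in\mathcal{M}_{1,g}(n_i)$ with $\gamma_i\in M_{n_i,1}(\C)$ and $\sum_i\gamma_i^*\gamma_i=1$. Matching the $X_1$-coordinate gives $1=\sum_i\gamma_i^*Y^i_1\gamma_i$; since $(Y^i_1)^2\preceq S^i(S^i)^*+\sum_j(Y^i_j)^2=I$ forces $Y^i_1\preceq I$, each term obeys $\gamma_i^*Y^i_1\gamma_i\le\|\gamma_i\|^2$, and equality in the sum forces $(I-Y^i_1)\gamma_i=0$, i.e.\ $Y^i_1\gamma_i=\gamma_i$, for every $i$. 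Then $\|(S^i)^*\gamma_i\|^2=\big\langle\big(I-\sum_j(Y^i_j)^2\big)\gamma_i,\gamma_i\big\rangle\le\|\gamma_i\|^2-\|Y^i_1\gamma_i\|^2=0$, so $(S^i)^*\gamma_i=0$; since $S^i$ is invertible, $\gamma_i=0$ for all $i$, contradicting $\sum_i\gamma_i^*\gamma_i=1$. Hence $W\notin\operatorname{mconv}(\operatorname{fext}(\mathcal{M}_{1,g}))$, so the closure in the first assertion is genuinely needed. I expect the main obstacle to be the verification that the explicit dilation $\widetilde{M}$ simultaneously stays inside $\mathcal{M}_{1,g}$, is filled to the equality $\widetilde{T}\widetilde{T}^*+\sum_j\widetilde{X}_j^2=I$, and keeps $\widetilde{T}$ invertible—that is, that the choice $\diag(X_j,-X_j)$ in the self-adjoint slots meshes with the choice $\diag(P^{1/2},P^{1/2})V$ in the non-self-adjoint slot; the scaling perturbation and the witness computation are routine once the free-extreme-point characterization is in hand.
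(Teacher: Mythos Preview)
Your proof is correct, and it takes a genuinely different route from the paper's in both halves.

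For the positive (closed-hull) part, the paper perturbs $T$ to be invertible and $TT^*+\sum_j X_j^2\preceq(1-\varepsilon)I$, writes $I-TT^*-\sum_j X_j^2=AA^*$, and then hand-builds a $2n$-dimensional dilation $S=\left[\begin{smallmatrix}T&A\\B&C\end{smallmatrix}\right]$, $Y_1=\left[\begin{smallmatrix}X_1&0\\0&D\end{smallmatrix}\right]$, $Y_j=\left[\begin{smallmatrix}X_j&0\\0&0\end{smallmatrix}\right]$ with $C=\delta I$, $B=-CA^*(T^{-1})^*$, $D=\sqrt{I-BB^*-CC^*}$, checking by Schur complement that $SS^*$ is block-diagonal and $SS^*+\sum_j Y_j^2=I$. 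Your construction instead perturbs only the self-adjoint part so that $P=I-\sum_j X_j^2\succ 0$, observes $P^{-1/2}T$ is a contraction, and invokes the Halmos unitary dilation to get $\widetilde T=\diag(P^{1/2},P^{1/2})V$ and $\widetilde X_j=\diag(X_j,-X_j)$. This is more conceptual: invertibility of $\widetilde T$ and the identity $\widetilde T\widetilde T^*+\sum_j\widetilde X_j^2=I$ fall out immediately from $VV^*=I$ and $P\succ 0$, with no Schur-complement bookkeeping. The paper's construction, on the other hand, is more explicit in that it never needs $P$ invertible, only $T$ invertible, and it puts all the ``defect'' into a single self-adjoint coordinate $Y_1$.

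For the negative part, the paper first uses that $w$ is a classical extreme point of the ball $\mathcal{M}_{1,g}(1)$ to reduce any matrix convex combination to a compression of a \emph{single} free extreme point, and then reads off a contradiction from the $(1,1)$ entry of $TT^*+\sum_j X_j^2=I$. You bypass that reduction and argue directly with an arbitrary matrix convex combination: the equality $\sum_i\gamma_i^*Y_1^i\gamma_i=1$ together with $Y_1^i\preceq I$ forces $Y_1^i\gamma_i=\gamma_i$, and then $S^i(S^i)^*=I-\sum_j(Y_j^i)^2$ kills $(S^i)^*\gamma_i$. Your argument is slightly more self-contained; the paper's is shorter once one accepts the extreme-point reduction.
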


\begin{proof}
Note that because every element of $\mathcal{M}_{1,g}$ is a row contraction (with additional constraints), the first level of $\mathcal{M}_{1,g}$ is the Euclidean ball of real dimension $2d+g = 2+g$. In particular, it includes the point $w = (0, \ldots, 0, 1)$. Now, $w$ is not free extreme; it fails the condition of the previous theorem since $0$ does not define an injective operator. However, $w$ is a classical extreme point of the ball. Hence, if we assume $w$ is the matrix convex combination of free extreme points, then since the corresponding realization
\[ w = \sum_{i=1}^n \gamma_i^* F^i \gamma_i \]
where the $F^i$ are free extreme just gives a convex combination of vector states, we conclude $w$ is the compression of a single free extreme point. So, $w$ admits a finite-dimensional dilation $F = (T, X_1, \ldots, X_g)$ where $T$ is injective and $TT^* + \sum\limits_{j=1}^g X_j^2 = I$. That is,
\[ T \, \sim_u \, \begin{bmatrix} 0 & a \\ b & W \end{bmatrix}, \,\,\,\,\,\,\, X_j \, \sim_u \, \begin{bmatrix} 0 & \beta_j \\ \beta_j^* & P_j \end{bmatrix} \, \forall j \in \{1, \ldots, g - 1\}, \,\,\,\,\,\,\, X_g \, \sim_u \, \begin{bmatrix} 1 & \beta_g \\ \beta_g^* & P_g \end{bmatrix}. \]
The top left entry of $I = TT^* + \sum\limits_{j=1}^g X_j^2$ is $1 = aa^* + \sum\limits_{j=1}^g \beta_j\beta_j^* + 1$, so $a = 0$ and each $\beta_j = 0$. We conclude that $T$ has a row of zeroes. This is a contradiction, as $T$ is an injective operator on a finite-dimensional space, and hence it is an invertible matrix. It follows that $\mathcal{M}_{1,g}$ is not the matrix convex hull of its free extreme points.

Now, consider the closed matrix convex hull of free extreme points instead. If we consider an arbitrary element $(T, X_1, \ldots, X_g)$ of $\mathcal{M}_{1,g}$, then we may approximate the tuple in order to assume $T$ is an invertible matrix and $TT^* + \sum\limits_{j=1}^g X_j^2 \preceq (1 - \varepsilon)I$ for some $\varepsilon > 0$. Fix $A \not= 0$ such that $TT^* + \sum\limits_{j=1}^g X_j^2 = I - AA^*$. We will construct a maximal dilation of the form
\[ S = \begin{bmatrix} T & A \\ B & C \end{bmatrix}, \,\,\,  Y_1 = \begin{bmatrix} X_1 & 0 \\ 0 & D \end{bmatrix}, \,\,\, Y_j = \begin{bmatrix} X_j & 0 \\ 0 & 0 \end{bmatrix} \,\,\, \forall j \in \{2, \ldots, g\}.\] 
First let $C = \delta I$, with $\delta > 0$ chosen small enough that $B := -CA^*(T^{-1})^*$ satisfies $BB^* + CC^* \preceq I$. By design, $TB^* + AC^* = 0 = BT^* + CA^*$, so the matrix $S$ has $SS^* = \begin{bmatrix} TT^* + AA^* & 0 \\ 0 & BB^* + CC^* \end{bmatrix}$. Since both $T$ and $C$ are invertible matrices, $SS^*$ dominates a positive multiple of the identity, so $S$ is also an invertible matrix. Finally, if we set $D := \sqrt{I - BB^* - CC^*}$, then
\[ SS^* + \sum\limits_{j=1}^g Y_j^2 \, = \, \begin{bmatrix} TT^* + AA^* + \sum\limits_{j=1}^g Y_j^2 & 0 \\ 0 & BB^* + CC^* + (I - BB^* - CC^*) \end{bmatrix} \, = \, \begin{bmatrix} I & 0 \\ 0 & I \end{bmatrix}.\]
The matrix tuple $(S, Y_1, \ldots, Y_g)$ meets all the conditions of Theorem \ref{theorem:m1gmaximal}, so it admits no nontrivial dilations in $\mathcal{M}_{1,g}$, and hence it decomposes as a direct sum of free extreme points. Since every point of $\mathcal{M}_{1,g}$ could be approximated to arbitrary precision by compressions of a direct sum of free extreme points, the proof of the closed spanning result is complete. Moreover, since the matrix dimension of the dilation $(S, Y_1, \ldots, Y_g)$ was twice the matrix dimension of $(T, X_1, \ldots, X_g)$, the final claim of the corollary also follows.
\end{proof}

\section*{Acknowledgments}

The authors are grateful both to the referee and to Orr Shalit for helpful commentary.

\section{Summary}
In the dimension-free setting, matrix convex sets have many families of extreme points, which carry with them both analogues of classical theorems and stark departures from the classical case. Matrix extreme points admit a spanning theorem, but they are overly redundant, unlike extreme points in the classical setting. However, just as exposed points are dense in the extreme points, so too are matrix exposed points dense in the matrix extreme points. Boundary representations give rise to a type of extreme point that completely norms any operator system (the dual of a matrix convex set), but these objects are often infinite-dimensional. In the finite-dimensional setting, the analogous type of extreme point is called a free extreme point. Whenever free extreme points span a matrix convex set, they in fact form a minimal spanning set, but free extreme points might not exist at all. This is a major barrier for problems where finite-dimensionality of the extreme points is crucial. 

For bounded free spectrahedra, which are matrix convex sets determined by a linear matrix inequality, the sufficiency of free extreme points comes down to the coefficient field. For bounded real free spectrahedra, or bounded complex free spectrahedra that are closed under complex conjugation, free extreme points form a minimal spanning set. Moreover, free extreme points in this setting can be detected using the kernel of a certain finite-dimensional operator, which lends itself to a robust dilation technique in finitely many steps. However, this spanning theorem does not apply in the general complex case, as seen by operator-algebraic constructions.

 \Addresses

\end{document}